\newcommand{\ii}{{\mathrm{i}}}
\DeclareMathOperator{\OO}{O}
\newcommand{\var}{{\mathrm{var}}}
\newcommand{\wt}{\widetilde}
\newcommand{\wh}{\widehat}
\renewcommand{\epsilon}{\varepsilon}
\newcommand{\e}{\varepsilon}
\newcommand{\eps}{\varepsilon}
\newcommand{\pt}{\partial}
\newcommand{\rd}{{\rm d}}
\newcommand{\Id}{{\rm Id}}
\newcommand{\bR}{{\mathbb R}}
\DeclareMathOperator{\Edg}{Edg}
\DeclareMathOperator{\Ext}{Ext}
\DeclareMathOperator{\Int}{Int}
\renewcommand{\Im}{\mbox{Im}}
\newcommand{\bm}{{\bf{m}}}
\newcommand{\bx}{{\bf{x}}}
\newcommand{\by}{{\bf {y} }}
\newcommand{\bw}{{\bf{w}}}
\newcommand{\bz}{{\bf {z}}}
\newcommand{\bla}{\mbox{\boldmath $\lambda$}}
\newcommand{\be}{\begin{equation}}
\newcommand{\ee}{\end{equation}}
\newcommand{\g}{{\sigma}}
\newcommand{\la}{\lambda}
\newcommand{\Om}{{\Omega}}
\newcommand{\om}{{\omega}}
\renewcommand{\th}{\theta}
\newcommand{\cL}{{\mathcal L}}
\newcommand{\cG}{{\mathcal G}}
\newcommand{\E}{{\mathbb E }}
\newcommand{\R}{{\mathbb R }}
\renewcommand{\P}{{\mathbb P}}
\newcommand{\ind}{{\,\mathrm{d}}}
\newtheorem{theorem}{Theorem}
\newtheorem{corollary}[theorem]{Corollary}
\newtheorem{lemma}[theorem]{Lemma}
\newtheorem{proposition}[theorem]{Proposition}
\newtheorem{definition}[theorem]{Definition}
\newcommand{\qed}{\hfill\fbox{}\par\vspace{0.3mm}}
\newenvironment{proof}{{\bf Proof.}} {\hfill\qed}
\numberwithin{theorem}{section}
\numberwithin{equation}{section}
\def\cal{}
\def\RR{{\mathbb R}}
\def\NN{{\mathbb N}}
\def\CC{{\mathbb C}}
\def\cH{{\mathcal H}}
\def\W2{W^{1,2}({\cal O}(M))}
\def\1half{\frac{1}{2}}
\title{Universality of General $\beta$-Ensembles}
\date{Feb 5, 2012}
\author{Paul Bourgade${}^1$\quad
L\'aszl\'o Erd\H os${}^2$\thanks{Partially supported
by SFB-TR 12 Grant of the German Research Council} \quad
Horng-Tzer Yau${}^1$\thanks{Partially supported
by NSF grants DMS-0757425, 0804279}
 \\\\
Department of Mathematics, Harvard University\\
Cambridge MA 02138, USA \\   bourgade@math.harvard.edu \quad
 htyau@math.harvard.edu ${}^1$ \quad  \\ \\
Institute of Mathematics, University of Munich, \\
Theresienstrasse 39, D-80333 Munich, Germany \\ lerdos@math.lmu.de ${}^2$
\\}
\begin{document}

\maketitle

\vspace{1cm}

\begin{abstract}
We prove the universality of the $\beta$-ensembles with convex
analytic potentials and for any $\beta>0$, i.e. we show
that the spacing distributions of  log-gases  at any inverse
temperature $\beta$ coincide with those   of
the Gaussian $\beta$-ensembles.
\end{abstract}

\vspace{1.5cm}

{\bf AMS Subject Classification (2010):} 15B52, 82B44

\medskip

\medskip

{\it Keywords:} $\beta$-ensembles, universality,  log-gas.

\medskip

\newpage

\section{Introduction}

The central concept of the random matrix theory  as envisioned by E.  Wigner
  is the general hypothesis that
the distributions of eigenvalue spacings
 of  large complicated quantum systems are universal
in the sense that they depend only on the symmetry classes
of the physical systems but not  on other  detailed structures.
The simplest case for this hypothesis is for  ensembles of large but finite dimensional
 matrices. The general hypothesis in this setting thus asserts that
 the eigenvalue spacing distributions of random matrices should be independent of
 the probability distribution of the ensemble, up to scaling.
This is generally referred to as the universality of
 random matrices.
In this paper we will focus only on the bulk behavior
i.e., on eigenvalue distribution in the interior of the spectrum,
although similar questions
regarding the edge distribution are also important.

 Over the past two decades,  spectacular  progress (see, e.g.,
\cite{BI, DKMVZ1, DKMVZ2, PS:97, PS, DG, Lub} and \cite{AGZ, De1, DG} for a review)
 on  bulk  universality
was made for classical invariant ensembles, i.e., matrix models with probability  measure given by $e^{- N \beta  { \rm Tr} V(H)/2 }/Z$
where $N$ is the size of the matrix $H$, $V$ is a real valued potential and $Z$
is the normalization.
It is well-known that the probability distribution of the ordered eigenvalues of $H$
 on
the simplex determined by $\lambda_1\leq \dots\leq \lambda_N$
 is given by
\begin{equation}\label{01}
\mu^{(N)}\sim e^{- \beta N \cH},
\quad
\cH =   \sum_{k=1}^N  \frac{1}{2}V(\lambda_k)-
\frac{1}{N} \sum_{1\leq i<j\leq N}\log (\lambda_j-\lambda_i) ,
\end{equation}
where the parameter  $\beta=1,2,4$ is determined by the symmetry type of the matrix,
corresponding respectively to the classical orthogonal, unitary  or symplectic ensemble.
With $\beta$ taking these special values, the correlation functions can be explicitly expressed
in terms of polynomials orthogonal to the measure $e^{- \beta V(x)/2}$.
 Thus the analysis of the correlation functions
relies heavily on the asymptotic properties of the  corresponding orthogonal polynomials. In the pioneering  work
of Gaudin, Mehta  and Dyson (see \cite{M} for a review), the potential
$V$ is the quadratic polynomial $V(x) = x^2$ and the orthogonal polynomials are the Hermite polynomials
for which asymptotic properties are well-known.
The major input of  the recent work
is the asymptotic analysis of  the orthogonal polynomials w.r.t. the measure $e^{- \beta V(x)/2}$ for general
classes of potentials.  The formulas  for orthogonal and symplectic cases, i.e.,  $\beta=1, 4$,  are much more difficult to use than
the one for the unitary case. While universality for $\beta=2$ was proved for very general potential,
the best results  for $\beta = 1, 4$ \cite{DG, KS, Sch} are still restricted to analytic $V$ with additional conditions.

For non-classical values of $\beta$, i.e., $\beta \not \in \{1, 2, 4\}$,
one can still consider the measure (\ref{01}), but
there is no simple expression of the correlation functions in
terms of orthogonal polynomials. Furthermore, the
 measure \eqref{01} does  not arise from  mean-field type matrix models
like Wigner matrices with independent entries.
  Nevertheless,  $\mu$ is a Gibbs measure of particles
in $\bR$  with a logarithmic interaction, where the parameter $\beta$
is interpreted as the inverse temperature and a priori can be an arbitrary positive number.
These measures are called general $\beta$-ensembles. We will often
refer to the variables $\lambda_j$ as particles or points
and the system is called log-gas.
It was proved   \cite{DumEde} that in the Gaussian case, i.e.,
when $V$ is quadratic, the measure \eqref{01} describes eigenvalues of
tri-diagonal matrices. This observation allowed one to establish
detailed properties, including the local
spacing distributions of the Gaussian $\beta$-ensembles \cite{VV}.

Gibbs measures in the continuum with long range or singular interactions are notoriously hard to analyze since
they are very far from the perturbative regime. For non-classical values of $\beta$, and
if we are not in the Gaussian case $V(\lambda)=\lambda^2$,
no  simple explicit formula
is known  to express  the correlation functions in terms of orthogonal polynomials,
and one cannot rely on any explicit known matrix model.
In this paper we undertake
the direct analysis of the Gibbs measure and we
prove the universality for invariant models for any $\beta > 0$.
In other words, we will prove
that the local spacing distributions of \eqref{01} are independent of the potential $V$ for certain class of $V$.
There are two major ingredients in  our new approach.

\medskip
\noindent
{\it Step 1.  Uniqueness of local Gibbs measures with logarithmic interactions.}
 The main result in this step  asserts that if the particles are not too far
 from their classical locations then
the spacing distributions are given by the corresponding Gaussian ones (We will take the uniqueness of the spacing
distributions as our definition of the uniqueness of Gibbs state).
More precisely, denote by $\rho$ the limiting density of the particles  under  the measure $\mu^{(N)}$ \eqref{01} as $N\to\infty$.
Let $\gamma_j =\gamma_{j,N}$ denote the location of the $j$-th point under $\rho$, i.e.,
 $\gamma_j$ is defined by
\be\label{def:gamma}
 N \int_{-\infty}^{\gamma_j} \rho (x) \rd x = j, \qquad 1\leq j\le N. \quad
\ee
We will call $\gamma_j$ the {\it classical location} of the $j$-th particle. The basic assumption is the following:

\medskip
\noindent
{\bf Assumption A.} For some  $  {\frak b} < \frac 1 {38} $ and any $\alpha>0$, there exists $\epsilon_0>0$ such that
 \be\label{assA}
\P_{\mu^{(N)}}  ( |\lambda_k-\gamma_k|\le N^{-1 + {\frak b}} ) \ge 1 -  \exp(-N^{\e_0})
\ee
for large enough $N$ and any $k \in [\alpha N, (1-\alpha) N]$.

\medskip
\noindent
Under this assumption
(under some minor and easily verifiable assumptions near the edges of the
limiting measure),
we will prove that the  spacing distributions of $\mu$ are given
 by the corresponding Gaussian model
with $V(x) = x^2$.   We will use the Gaussian case as our reference
ensemble only for the convenience of definiteness.   In fact, no detailed
properties of the Gaussian measures  are used in the proof and any
other reference ensemble would have worked as well.
Furthermore, in this step
we make no  assumption on the convexity of $V$,  which is needed in the next step.

\medskip
\noindent
{\it Step 2.  Particle  location estimate.}
The second step is to  verify  Assumption A. For non-classical $\beta$, Assumption A
is only proved for $\frak b$ near one \cite{Joh, PS, KS}
 for analytic potential $V$ under certain constraint.
This is far from sufficient to complete Step 1.  We will prove Assumption A
for all $\beta > 0$ under the assumption that $V$ is convex and analytic.  Our method uses the following three ideas:  (1)  The analysis of
the loop equation
 in \cite{Joh, KS, Sch} to control the density.
(2) The  logarithmic Sobolev
inequality guaranteed by the  convexity of $V$.
(3) A multiscale analysis of the probability measures
of invariant ensembles.
We note that the assumption of analyticity on $V$ is needed only for using
the loop equation in  (1).

The basic idea of our proof is to use the following tool from \cite{ESY4}: For two probability
measures $\mu$ and $\om$ define the Dirichlet form by
\[
  D(\mu\mid \om) : =
 \frac{1}{2 N} \int \Big|\nabla \sqrt{ \frac{\rd\mu}{\rd\om}}\Big|^2\rd \om.
\]
Then  the difference of the local spacing distributions
of the two measures is negligible  provided that
the Dirichlet form per particle is sufficiently small in the large $N$ limit \cite{ESY4}.
Notice that if we used  the relative entropy of the two measures, then the uniqueness
 of the Gibbs measures would require the total entropy, which is an extensive quantity, to  be small.
To apply this Dirichlet form  inequality,
we first localize the measure  by fixing $\lambda_j$
for $j$ outside, say, the interval $[L+1, L + K]$ for $L$ in the bulk and $K = N^k$ for
some $k > 0$.
We will call these data of $\lambda_j$ outside the interval
 $[L+1, L + K]$ the boundary condition.
We then compare this measure to a local Gaussian $\beta$-ensemble
 with a fixed boundary condition by showing that
 the Dirichlet form per particle of these two measures is small
 for typical boundary conditions
w.r.t. $\mu$.

Our  approach shares  some philosophy from the recent
 method on the universality of Wigner matrices \cite{ESY4, ESYY}.
In this approach,  the key  condition to establish is

\bigskip
{\bf Assumption III.} There exists an  ${\frak a} > 0$ such that we have
\be
 \E_{\mu_W}   \frac{1}{N}\sum_{j=1}^N(x_j-\gamma_j)^2
  \le CN^{-1-2{\frak a}}
\label{assum3}
\ee
with a constant $C$ uniformly in $N$. Here $\mu_W$ is the law given by the Wigner ensemble.

Under this assumption,  a strong estimate on the local ergodicity of
Dyson Brownian motion (DBM)  was established  in \cite{ESY4, ESYY}.
DBM \cite{Dy} establishes a dynamical interpolation between Wigner matrices and
the invariant equilibrium measure $\mu$.
This estimate then implies the universality of Wigner matrices. Thus the main task
in proving the universality of Wigner matrices is reduced to verifying Assumption III.

There are several similarities between the method used for the universality of
Wigner matrices \cite{ESY4, EYYrigi} and the current proof for $\beta$-ensembles:
  (i)  Both rely on crude estimates such as \eqref{assA} and
  \eqref{assum3} on the location of the
eigenvalues to establish the local spacing distributions are the same
 as in the Gaussian cases.
(ii) Both use estimates on the Dirichlet form to identify the
 local spacing distributions. (iii) The main model dependent argument
is to prove these crude bounds on the eigenvalues.
The precision of these a-priori estimates
on the eigenvalues
is weaker than the local spacing, but better than previously known results on eigenvalue locations:
we have to develop
new methods to prove \eqref{assA} and
  \eqref{assum3}.

There are, however, substantial differences between the proofs of universalities for Wigner and $\beta$-ensembles.
First, since the $\beta$-ensembles  are already in equilibrium,
 there is no dynamical relaxation mechanism to exploit and the
 local statistics need to be identified directly without dynamical argument.
Second, we obtain the crude estimate \eqref{assA}
by a method  completely different from the Wigner matrices, as there is no underlying
matrix ensemble with independent entries to analyze. The accuracy result we obtain by this new method is actually optimal,
i.e. \eqref{assA} will be shown to hold for any ${\frak b}>0$.

\section{Statement of the main result}

Consider a probability measure
\begin{equation}\label{eqn:measure}
\mu^{(N)}_{\beta, V}=\mu^{(N)}(\rd\lambda)=\frac{1}{Z_N}
\prod_{1\leq i<j\leq N}|\lambda_i-\lambda_j|^\beta\prod_{k=1}^N e^{-N\frac{\beta}{2}V(\lambda_k)}
\rd\lambda_1\dots\rd\lambda_N,
\end{equation}
where $\lambda=(\lambda_1,\dots,\lambda_N)$, $\lambda_1\leq \dots\leq \lambda_N$.
Here the inverse temperature satisfies $\beta>0$ and the external potential $V$ is any
convex real analytic function   in $\RR$, and such that
\begin{equation}\label{eqn:LSImu}
\varpi =\frac{\beta}{2} \inf_{x\in\RR}V''(x) > 0.
\end{equation}
For such a convex potential, as noted in the next section the equilibrium measure, denoted by $\rho(s)\rd s$, is supported on a single interval $[A,B]$.
In the following, we omit the superscript $N$ and we will write $\mu$ for $\mu^{(N)}$.
We will use $\P_\mu$ and $\E_\mu$ to denote the probability and the
expectation with respect to $\mu$.

The  Gaussian case corresponds to
$V(\lambda)=\lambda^2$; the expectation with respect to this
Gaussian measure will be denoted by $\E_{\rm{Gauss}}$, and the equilibrium measure is
known to be
$$
\rho_{sc}(E):=\frac{1}{2\pi}\sqrt{(4-E^2)_+},
$$
the semicircle density.  The Gaussian case includes
the classical GUE, GOE and GSE ensembles for the special choice of $\beta=1,2,4$, but
our result holds for all $\beta > 0$.

Now we state our  main theorem which will be proven
at the end of Section~\ref{sec:loceq}:

\begin{theorem}\label{thm:Main} Assume
$V$ is any
real analytic function with 
$\inf_{x\in\RR}V''(x) > 0$.
Let $\beta> 0$.
 Consider the $\beta$-ensemble $\mu=\mu_{\beta, V}$.
Let $G:\RR\to \RR$ be a smooth, compactly supported function.
Let $E\in (A,B)$ lie in the interior of the support of $\rho$, and
similarly let $E'\in (-2,2)$ be inside the support of $\rho_{sc}$.  Define $L$ and $L'$ by
$$
  \frac{L}{N} = \int_{A}^{E} \rho(x) \rd x , \qquad
\frac{L'}{N} = \int_{-2}^{E'} \rho_{sc}(x) \rd x.
$$
Fix a parameter $K=N^k$ where $0<k\le \frac{1}{2}$ is an arbitrary constant. Let
 $I$ and $I'$  be two intervals of natural numbers,
$I=[L+1, L+K]$, $I'=[L'+1, L'+K]$
 with length $K=|I|$.
Then
\be
\lim_{N\to\infty}\Bigg| \E_\mu \frac{1}{K\rho(E)}\sum_{i\in I} G\Big(
 \frac{N(\la_i-\la_{i+1})}{\rho(E)}\Big)
 - \E_{\rm Gauss}
\frac{1}{K\rho_{sc}(E')}
\sum_{i\in I'} G\Big( \frac{N(\la_i-\la_{i+1})}{\rho_{sc}(E')}\Big)\Bigg|=0,
\label{eq:Main}
\ee
i.e. the appropriately normalized  particles gap distribution of  the
measure $\mu_{\beta, V}$ at the level $E$ in the bulk of the limiting density  asymptotically coincides with that
for the Gaussian case and it is independent of the value of $E$ in the bulk. In particular the gap distribution  is universal.
\end{theorem}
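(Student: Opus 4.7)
The plan follows the two-step architecture outlined in the introduction. The overarching tool is the Dirichlet form criterion of \cite{ESY4}: if $\mu$ and $\omega$ are probability measures on ordered configurations and $D(\mu\mid\omega)/N$ is small, then the suitably averaged gap distributions of $\mu$ and $\omega$ agree in the limit. I would therefore aim to produce a reference measure built from local Gaussian $\beta$-ensembles, whose gap statistics coincide with those of the Gaussian $\beta$-ensemble at the matched level $E'$, and against which the Dirichlet form of $\mu$ is controllable.

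First I would localize. Fix the window $I=[L+1,L+K]$ with $K=N^k$ and condition $\mu$ on the exterior configuration $\mathbf{y}=(\lambda_j)_{j\notin I}$, obtaining a measure $\mu_{\mathbf{y}}$ on the $K$ interior points trapped between $\lambda_L$ and $\lambda_{L+K+1}$. Introduce the local Gaussian reference $\omega_{\mathbf{y}}$, defined as the $\beta$-log-gas with quadratic external potential conditioned on the same boundary $\mathbf{y}$. The Radon--Nikodym derivative $\rd\mu_{\mathbf{y}}/\rd\omega_{\mathbf{y}}$ depends only on $\sum_{k\in I}[V(\lambda_k)-\lambda_k^2]/2$ plus the logarithmic interaction of the interior points with the frozen exterior; a Taylor expansion of these terms around the classical locations, combined with Assumption~A applied to the boundary points $\lambda_L,\lambda_{L+K+1}$ and to nearby exterior points, should give a pointwise bound on the gradient of the log-ratio that yields $D(\mu_{\mathbf{y}}\mid\omega_{\mathbf{y}})/K=o(1)$ for $\mathbf{y}$ typical under $\mu$. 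A parallel argument, using convexity and the explicit form of $V(x)=x^2$, matches $\omega_{\mathbf{y}}$ to the unconditional Gaussian ensemble at level $E'$, so that combining both steps identifies the two sides of \eqref{eq:Main}.

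Step 2, verifying Assumption~A, is where the new work lies. I would run a bootstrap across scales. Start from a coarse rigidity bound with $\mathfrak{b}$ near $1$, obtained from the loop equation for the Stieltjes transform $m_N(z)=N^{-1}\sum_j(\lambda_j-z)^{-1}$ as in \cite{Joh,KS,Sch}; this controls linear statistics of $\mu$ on macroscopic scales. Then improve the scale iteratively: convexity $V''\geq 2\varpi/\beta$ makes $\cH$ uniformly convex, so a logarithmic Sobolev inequality for $\mu$ holds with constant $\OO(1/N)$, and a Herbst-type argument converts expectations of linear statistics into sub-Gaussian concentration for particle positions at the current scale. The improved rigidity feeds back into the loop equation to sharpen the next step. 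Localizing this analysis to mesoscopic windows of size $N^{-1+\mathfrak{b}}$ and using the analyticity of $V$ to control error terms in the loop equation, one should drive $\mathfrak{b}$ down arbitrarily close to $0$, well below the threshold $1/38$ required by Step~1.

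The main obstacle I anticipate is precisely this multiscale rigidity. Each use of the loop equation requires a cancellation in its quadratic term that is only available when the configuration is already rigid on a smaller scale; conversely, applying the log-Sobolev inequality inside a window demands that the \emph{conditional} Hamiltonian be sufficiently convex after one freezes the exterior, which in turn requires rigidity of the boundary particles. Keeping these two ingredients compatible while iterating the scale without losing accuracy, and ensuring that the exceptional probability in \eqref{assA} decays like $\exp(-N^{\e_0})$ at every stage, will be the delicate analytic task. Once this is in place, the Dirichlet form comparison in Step~1 is essentially a two-measure relative entropy computation and should be routine.
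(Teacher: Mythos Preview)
Your Step~1 has a genuine gap. If $\omega_{\mathbf y}$ is the local Gaussian $\beta$-ensemble conditioned on the \emph{same} boundary $\mathbf y$, then the interaction of the interior points with the frozen exterior is identical in $\mu_{\mathbf y}$ and in $\omega_{\mathbf y}$ and cancels in the Radon--Nikodym derivative. What remains is $\log(\rd\mu_{\mathbf y}/\rd\omega_{\mathbf y}) = -\tfrac{N\beta}{2}\sum_{i\in I}[V(x_i)-x_i^2] + \mathrm{const}$, whose $i$-th partial derivative is $-\tfrac{N\beta}{2}[V'(x_i)-2x_i]$. No Taylor expansion around classical locations makes this small: $V'(E)-2E$ is a fixed nonzero number, so the Dirichlet form is of order $NK$, not $o(K)$. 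The paper avoids this by comparing $\mu_{\mathbf y}$ not to a Gaussian with boundary $\mathbf y$ but to a reference measure $\sigma_\theta^{\hat\tau,s}$ whose boundary sits at the (rescaled) \emph{Gaussian} classical locations $\theta_j' = \theta_j/s(\mathbf y)$. Now the gradient of the log-ratio contains both $V'(x_j)-W_s'(x_j)$ and the difference of the two boundary interactions $N^{-1}\sum_k\big[(x_j-y_k)^{-1}-(x_j-\theta_k')^{-1}\big]$; these combine, via the equilibrium relations $\tfrac12 V'(x)=\int\rho_V(y)(x-y)^{-1}\rd y$ and $\tfrac12 W_s'(x)=\int\rho_{W_s}(y)(x-y)^{-1}\rd y$, to leave essentially only $\int_{y_L}^{y_{L+K+1}}[\rho_V(y)-\rho_{W_s}(y)](x_j-y)^{-1}\rd y$ over the short window, which \emph{is} small after rescaling so that $\rho_V(E)\approx\rho_{W_s}(0)$. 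This cancellation (the term $\Omega_j^1=0$ in the paper) is the heart of Step~1 and is absent from your outline.

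Two further ingredients you are missing. First, the paper inserts an artificial quadratic confinement $Q^\tau_j(x)=(2\tau)^{-1}(x-\gamma_j)^2$ to form a \emph{local relaxation measure} $\mu_{\mathbf y}^\tau$ with Hessian bounded below by $\tau^{-1}$, so that the Dirichlet-form comparison actually runs with relaxation time $\tau$; the passage $\mu_{\mathbf y}\to\mu_{\mathbf y}^\tau$ costs $N\delta^2/\tau$ and is what forces the accuracy exponent $\mathfrak b$ below a fixed threshold. Second, because the raw boundary $\mathbf y$ may pile up near the edges of $[y_L,y_{L+K+1}]$ at scale $\delta$, the paper first \emph{regularizes} the nearest $O(B)$ boundary points to their classical positions (entropy cost $O(B^2\log N)$) before running the Dirichlet form estimate. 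Your Step~2 sketch is closer in spirit to what the paper does, though note that the paper's iteration does not freeze the exterior and use a conditional LSI; it uses globally defined ``locally constrained'' measures $\omega^{(k,M)}$ obtained by adding a convex penalty $\theta\big(N^{1-\epsilon}(\lambda_i-\lambda_j)/i_M\big)$, which boosts convexity in the \emph{difference} directions while staying close to $\mu$ in entropy once accuracy at the previous scale is known.
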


{\it Remark.} The same result (with the same proof) holds for higher order correlation functions
of particles gaps. More precisely,
fix $n\ge 1$ and an array of positive  integers, $\bm = (m_1, m_2, \ldots, m_n)\in
\NN^n_+$.
Let  $G:\bR^n\to\bR$ be a bounded smooth function with compact
support and we
 define
\be
  \cG_{i,\bm}(\bla) :=
\frac{1}{\rho(E)^n}G\Big( \frac{N(\la_i-\la_{i+m_1})}{\rho(E)} \, ,
\frac{N(\la_{i+m_1}-\la_{i+m_2})}{\rho(E)} \, ,
\ldots, \frac{N(\la_{i+m_{n-1}}-\la_{i+m_n})}{\rho(E)}\Big).
\label{cG}
\ee
Then, under the conditions of Theorem~\ref{thm:Main} and
 using its  notations,
 we have
\be
\lim_{N\to\infty}\Bigg| \E_\mu \frac{1}{K}\sum_{i\in I} \cG_{i,\bm}(\bla)
 - \E_{\rm Gauss}
\frac{1}{K}
\sum_{i\in I'} \cG_{i,\bm}'(\bla)\Bigg|=0,
\label{eq:Mainm}
\ee
where $\cG_{i, \bm}'$ is defined exactly as $\cG_{i,\bm}$ but
$\rho(E)$ is replaced with $\rho_{sc}(E')$.

The limit \eqref{eq:Mainm} can be reformulated as
 the convergence of the  correlation functions.
Let $\rho^{(N)}_n$ denote the  $n$-point correlation
function of the measure $\mu =\mu_{\beta, V}^{(N)}$  defined by
\begin{equation}\label{eqn:corrFunct}
\rho^{(N)}_n(x_1,\ldots,x_n)=
\int_{\RR^{N-n}}\tilde\mu(x)\rd x_{n+1}\dots\rd x_{N},
\end{equation}
where $\tilde \mu$ is the  symmetrized version of $\mu$ given in \eqref{eqn:measure}
but defined  on  $\RR^N$ instead of the simplex:
$$
\tilde\mu^{(N)}(\rd\lambda)=\frac{1}{N!}\mu(\rd\lambda^{(\sigma)}),
$$
where
$\lambda^{(\sigma)}=(\lambda_{\sigma(1)},\dots,\lambda_{\sigma(N)})$, with
$\lambda_{\sigma(1)}<\dots<\lambda_{\sigma(N)}$.

{F}rom   \eqref{eq:Mainm}
we have the convergence of the correlation functions, stated  as the following corollary.
Since the proof is a standard argument and it
 is essentially identical to  the one given  in Section 7 of  \cite{ESYY},
we omit it.

\begin{corollary}
Under the assumption of Theorem \ref{thm:Main}  and with the same notations,
for any smooth test functions $O$ with compact support  and for any $0<k\le \frac{1}{2}$,
we have,  with $s:=N^{-1+k}$, that
\begin{align*}
\lim_{N \to \infty} \int  & \rd \alpha_1 \cdots \rd \alpha_n\, O(\alpha_1,
\dots, \alpha_n) \Bigg [
  \int_{E - s}^{E + s} \frac{\rd x}{2 s}  \frac{1}{ \varrho (E)^n  }  \rho_n^{(N)}   \Big  ( x +
\frac{\alpha_1}{N\varrho(E)}, \dots,   x + \frac{\alpha_n}{N\varrho(E)}  \Big  ) \\
&
-   \int_{E' - s}^{E' + s} \frac{\rd x}{2 s}  \frac{1}{\varrho_{sc}(E)^n}
\rho_{{\rm Gauss}, n}^{(N)}
   \Big  ( x +
\frac{\alpha_1}{N\varrho_{sc}(E)}, \dots,   x + \frac{\alpha_n}{N\varrho_{sc}(E)}  \Big  ) \Bigg ]
\;=\; 0\,.
\end{align*}
\end{corollary}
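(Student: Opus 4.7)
The plan is to reduce the corollary to the gap-statistics convergence \eqref{eq:Mainm} stated in the remark following Theorem~\ref{thm:Main}, via the standard machinery that expresses averaged $n$-point correlation functions as sums of gap observables---essentially the argument of Section~7 of \cite{ESYY}.

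First, by the definition of $\rho_n^{(N)}$ and the symmetry of $\tilde\mu$,
$$
\rho_n^{(N)}(y_1, \ldots, y_n) = \frac{1}{(N)_n}\,\E_\mu \sum_{i_1,\ldots,i_n \text{ distinct}} \prod_{j=1}^n \delta(\lambda_{i_j} - y_j),\qquad (N)_n := N(N-1)\cdots(N-n+1).
$$
Substituting this into the corollary's integrand and changing variables $y_j = x + \alpha_j/(N\rho(E))$ back to $\alpha_j$ yields
$$
\int O(\alpha)\,\frac{\rho_n^{(N)}\!\left(x + \tfrac{\alpha_1}{N\rho(E)},\ldots,x + \tfrac{\alpha_n}{N\rho(E)}\right)}{\rho(E)^n}\,\rd\alpha = \frac{N^n}{(N)_n}\,\E_\mu \sum_{i_1,\ldots,i_n \text{ distinct}} O\!\left(N\rho(E)(\lambda_{i_j}-x):j=1,\ldots,n\right).
$$

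Next, I integrate out $x$. The substitution $u = N\rho(E)(\lambda_{i_1}-x)$ converts the $x$-integral over $\bR$ into $(N\rho(E))^{-1}\tilde O\!\left(N\rho(E)(\lambda_{i_j}-\lambda_{i_1}):j\geq 2\right)$, where
$$
\tilde O(v_2,\ldots,v_n) := \int_\bR O(u,u+v_2,\ldots,u+v_n)\,\rd u
$$
is bounded and compactly supported. Restricting $x$ to $[E-s,E+s]$ translates into $\lambda_{i_1}\in[E-s,E+s]$ up to $O(1/N)$ boundary errors, negligible against the window $2s = 2K/N$. Multiplying by $1/(2s)$, the averaged correlation-function integral equals, up to $o(1)$,
$$
\frac{1}{2K\rho(E)}\,\E_\mu \sum_{i_1:\,\lambda_{i_1}\in[E-s,E+s]}\ \sum_{\substack{i_2,\ldots,i_n\\ \text{distinct from }i_1}} \tilde O\!\left(N\rho(E)(\lambda_{i_j}-\lambda_{i_1}):j\geq 2\right).
$$

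Now I reindex $i_j = i_1 + m_{j-1}$ with $m_0 := 0$ and group the inner sum by the offsets $\bm = (m_1,\ldots,m_{n-1})$. Rewriting $\lambda_{i_1+m_{j-1}}-\lambda_{i_1+m_j}$ in terms of consecutive gaps identifies each summand as a gap observable of the shape \eqref{cG}. By Assumption~A (proven earlier in the paper, ultimately with ${\frak b}$ arbitrarily small), the random condition $\lambda_{i_1}\in[E-s,E+s]$ may be replaced by the deterministic $i_1 \in [L - \lfloor K\rho(E)\rfloor, L + \lfloor K\rho(E)\rfloor]$, up to at most $O(N^{{\frak b}})$ ambiguous boundary indices, negligible against $K=N^k$ for ${\frak b}<k$. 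Moreover, compact support of $\tilde O$ combined with rigidity forces only $|\bm|=O(1)$ to contribute to leading order; the remaining tail vanishes in $N$ thanks to the local density control on the particles. For each fixed $\bm$, equation \eqref{eq:Mainm} gives convergence of the $\E_\mu$-contribution to its $\E_{\rm Gauss}$-counterpart with the matching Gaussian gap observable; the same reduction on the Gaussian side produces the $E'$-centered expression, and summing over $\bm$ closes the proof. The main technical obstacles are the boundary-error analysis in the window conversion and the uniform-in-$\bm$ tail estimate, both handled by Assumption~A together with the compact support of $O$.
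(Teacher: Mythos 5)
Your proposal is correct and follows essentially the route the paper intends: the paper omits this proof precisely because it is the standard reduction of averaged correlation functions to gap observables (Section 7 of \cite{ESYY}), which is what you carry out -- expressing $\rho_n^{(N)}$ via the symmetrized point process, integrating out the energy window to produce the observable $\tilde O$ of consecutive-gap type, replacing the random window condition by a deterministic index interval via the rigidity estimate of Theorem \ref{thm:accuracy}, and invoking \eqref{eq:Mainm} for each offset vector $\bm$ with the tail in $\bm$ controlled by local density bounds. The technical points you flag (boundary indices of order $N^{\eps}$ versus $K=N^k$, and uniformity of the error in $\bm$) are exactly the ones the cited standard argument handles, so no essential idea is missing.
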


The local statistics of the $\lambda_i's$ in the Gaussian case have been explicitly computed by Gaudin, Mehta and Dyson
(see, e.g., \cite{M})
for the classical value $\beta \in \{1, 2, 4\}$. For general $\beta> 0$, there is
an explicit description in terms of some stochastic differential equations, the {\it Brownian carousel}  \cite{VV}.

Theorem \ref{thm:Main} will be proved in two steps as explained in the introduction.
For logical reasons, we will first present  Step 2 on  particle location estimates in
Section 3 and then
Step 1 on the uniqueness of Gibbs measure in a finite interval in Sections 4 and 5.

\section{Optimal accuracy for particle locations}

Along this section, we assume that $V$ satisfies the same conditions as in Theorem \ref{thm:Main}.
Let the typical position  $\gamma_k$ be
defined by
$$
 \int_{-\infty}^{\gamma_k}\rho(s)\rd s=\frac{k}{N}.
$$
Moreover, all constants in this section depend on the potential $V$, which is fixed.
In the following, we will denote $\llbracket x,y\rrbracket=\NN\cap[x,y]$,

The purpose of this section is to prove that accuracy holds
for the measure $\mu$ at the optimal scale $1/N$, in the following sense.

\begin{theorem}\label{thm:accuracy}
Take any $\alpha>0$ and $\epsilon>0$. There are constants
$\delta,c_1,c_2>0$ such that for any $N\geq 1$ and $k\in\llbracket \alpha N,(1-\alpha) N\rrbracket$,
$$
\P_\mu\left(|\lambda_k-\gamma_k|> N^{-1+\epsilon}\right)\leq c_1e^{-c_2N^\delta}.
$$
\end{theorem}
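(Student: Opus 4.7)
The proof would follow the three-ingredient strategy spelled out in the introduction: the loop (Schwinger--Dyson) equation for an initial density estimate, the logarithmic Sobolev inequality guaranteed by the uniform convexity $\inf V'' > 0$, and a multiscale bootstrap that sharpens the accuracy progressively. The goal at each iteration is to trade a rigidity bound at scale $N^{-1+\mathfrak{b}}$ for one at scale $N^{-1+\mathfrak{b}'}$ with $\mathfrak{b}' < \mathfrak{b}$, while keeping stretched-exponential control on the exceptional set.

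First I would establish a coarse rigidity estimate via the loop equation for the Stieltjes transform $m_N(z) = \frac{1}{N}\sum_k (z-\lambda_k)^{-1}$, following the Johansson--Kriecherbauer--Shcherbina approach. The equation $\mathbb{E}_\mu[m_N(z)^2] = $ lower-order terms, once combined with analyticity of $V$ (the only place analyticity is used), yields concentration of $m_N(z)$ around the deterministic limit $m(z) = \int \rho(s)(s-z)^{-1}\rd s$ uniformly in a strip $\mathrm{Im}\, z \geq N^{-\eta}$. Converting this into a bound on the counting function $\mathcal{N}(E) = \#\{k : \lambda_k \leq E\}$ by a Helffer--Sj\"ostrand type argument delivers a preliminary accuracy $|\lambda_k - \gamma_k| \leq N^{-1+\mathfrak{b}_0}$ with very high probability, for some $\mathfrak{b}_0 < 1$.

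Next I would run the multiscale bootstrap powered by LSI. The Hessian of $\mathcal{H}$ is bounded below by $\varpi$ (from $\inf V'' > 0$ and the strict convexity of $-\log$), so Bakry--\'Emery gives a log-Sobolev inequality for $\mu$ with constant $C/(N\varpi)$ and hence Gaussian concentration of Lipschitz observables. Applied to linear statistics, this controls averages $\frac{1}{K}\sum_{k\in I}(\lambda_k-\gamma_k)$ on windows $I$ of length $K$. To upgrade from averages to pointwise control, I would condition $\mu$ on the configuration outside a window $I=[L+1,L+K]$ with $K=N^k$: the conditional density remains log-concave, with an effective external potential coming from the frozen log-interactions that is itself convex and smooth on the event that the exterior is rigid, and, crucially, the internal logarithmic repulsion contributes an extra Hessian lower bound that grows with $N/K$. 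Applying LSI to this conditional ensemble produces sharper concentration for the interior particles, upgrading $\mathfrak{b}$ to some $\mathfrak{b}' < \mathfrak{b}$. Iterating finitely many times along a geometric sequence of window sizes, one drives the accuracy down to $N^{-1+\epsilon}$ for any prescribed $\epsilon>0$. The exponential tail $e^{-c_2 N^\delta}$ is the Gaussian tail of LSI concentration, which survives the union bounds over the $O(N)$ bulk indices and the $O(1)$ scales.

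The main obstacle is the multiscale step, and specifically the handling of random boundary conditions induced by conditioning on the exterior of a window. One must show that, with overwhelming $\mu$-probability, the boundary configuration lies in a class on which (i) the induced effective external potential is regular enough to preserve strong convexity, and (ii) the conditional rigidity obtained from the boundary-dependent LSI can be integrated over boundary data without losing the stretched-exponential tail. Quantitatively matching the strength of the inductive hypothesis (which certifies that most boundary data are ``good'') to the improvement of $\mathfrak{b}$ at the next scale is the technical heart of Section~3; everything else is a careful translation between linear statistics, counting function, and individual locations.
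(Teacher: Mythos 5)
Your high-level architecture is right: loop equation plus Helffer--Sj\"ostrand for the initial $N^{-1/2+\epsilon}$ estimate, then a convexity-driven multiscale bootstrap that improves the exponent iteratively, with Gaussian tails coming from LSI. But the mechanism you propose for obtaining the improved local convexity is not what the paper does, and as stated it has a real gap.

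You suggest freezing the particles outside a window and exploiting the Coulomb repulsion inside it to strengthen the Hessian. The problem is that the Hessian contribution of the internal repulsion is $\frac{\beta}{N}\sum_{i<j\in I}\frac{(v_i-v_j)^2}{(\lambda_i-\lambda_j)^2}$: it only gives the claimed $N/K$-type improvement when the particles are already known to be confined at separation $\lesssim K/N$, i.e.\ it is configuration-dependent, hence boundary-dependent, hence random. You recognize that you must restrict to ``good'' boundaries and integrate over them, but this makes the improved LSI constant of the conditional measure itself a random quantity, and the bootstrap must be closed while carrying that randomness. The paper avoids this entirely by a different device: instead of conditioning, it multiplies $\mu$ by a soft penalty $\exp(-\phi^{(k,M)})$ where $\phi^{(k,M)}=\beta\sum_{i<j\in I}\theta(N^{1-\epsilon}(\lambda_i-\lambda_j)/i_M)$ with $\theta''\ge 1$ outside $[-1,1]$ (Definition~\ref{def:locallyConstrained}). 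The combination $\frac{1}{(\lambda_i-\lambda_j)^2}+\frac{N^{2-2\epsilon}}{i_M^2}\theta''(\cdot)\ge\frac{N^{2-2\epsilon}}{i_M^2}$ is a \emph{deterministic} inequality, so the improved convexity of Lemma~\ref{lem:localConvexity} holds uniformly in the configuration, and the comparison back to $\mu$ is done once, in relative entropy, using only the inductive rigidity hypothesis (which makes the penalty inactive with overwhelming probability). That is the ``key idea'' the paper flags explicitly, and your conditioning-based version does not reproduce it.

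Two further points you pass over silently. First, the improved Hessian bound is only in the differenced directions $\sum v_i=0$; it says nothing about the center-of-mass direction, where the Sobolev constant is still the global $\varpi$. So one cannot just ``apply LSI to the conditional ensemble''; one needs the \emph{directional} log-Sobolev inequality (Lemma~\ref{lem:localLSI}, a Brascamp--Lieb argument) for observables of the form $F(\sum v_i x_i)$ with $\sum v_i=0$, and then a separate, much easier concentration for the block average to control the remaining direction, as in the proof of Proposition~\ref{prop:concentration}. Second, the bootstrap also requires a matching improvement in \emph{accuracy} (not just concentration around the random mean): the paper separately reruns the loop-equation/Helffer--Sj\"ostrand argument at each scale (Proposition~\ref{prop:accuracy}), decomposing the variance of $\sum(z-\lambda_k)^{-1}$ into edge, exterior, and interior parts and feeding in the improved concentration. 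Without this second half of the induction step you would only control $\lambda_k-\E_\mu\lambda_k$, not $\lambda_k-\gamma_k$. Conditioning on a window is indeed used in the paper, but only later, in Sections~4--5 for comparing $\mu$ to the Gaussian reference; it plays no role in the rigidity proof of Section~3.
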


After some initial estimates relying on large deviations results, the proof consists in comparing
$\mu$ to some {\it locally constrained measures} for which better concentration estimates can be proved  for the differences between particles.  This measure is related to the pseudo-equilibrium measure in \cite{ESY4}, but has distinctly different properties.
Iterations of these comparisons will give optimal accuracy.

\subsection{Initial estimates}

The purpose of this paragraph it to prove the following crude estimate. It will be the initial step in the induction of Subsection \ref{subsec:induction}.

\begin{proposition}\label{prop:InitialEstimate}
For any $\alpha, \epsilon>0$
there are constants $c_1,c_2,\delta>0$ such that for any $N$ and $k\in\llbracket \alpha N,(1-\alpha)N\rrbracket$
\begin{equation}\label{eqn:InitialEstimate}
\P_\mu\left(|\lambda_k-\gamma_k|>N^{-\frac{1}{2}+\epsilon}\right)\leq c_1e^{- c_2 N^\delta}.
\end{equation}
\end{proposition}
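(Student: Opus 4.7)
The plan is to exploit the strong log-concavity of $\mu$ coming from the convexity hypothesis \eqref{eqn:LSImu} and to combine it with a large-deviation estimate that locates the expected particle positions.

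First, I would verify that $-\log\mu$ is strongly convex on the ordered simplex with modulus at least $N\varpi$: the potential part contributes $\tfrac{N\beta}{2}\,\mathrm{diag}(V''(\lambda_i))\succeq N\varpi\,\Id$, while the logarithmic interaction contributes the positive-semidefinite Laplacian-type matrix $\beta L$ with $L_{kk}=\sum_{j\neq k}(\lambda_k-\lambda_j)^{-2}$ and $L_{ij}=-(\lambda_i-\lambda_j)^{-2}$ off the diagonal (positivity of $L$ follows from the identity $v^TLv=\sum_{i<j}(v_i-v_j)^2/(\lambda_i-\lambda_j)^2$). By the Bakry--\'Emery criterion $\mu$ then satisfies a logarithmic Sobolev inequality with constant of order $(N\varpi)^{-1}$, and Herbst's argument yields
\begin{equation*}
\P_\mu\bigl(|F(\lambda)-\E_\mu F|\ge t\bigr)\le 2\exp(-c\varpi N t^2)
\end{equation*}
for every $F:\RR^N\to\RR$ that is $1$-Lipschitz in the Euclidean norm. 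The coordinate map $F(\lambda)=\lambda_k$ is trivially $1$-Lipschitz, so with $t=\tfrac12 N^{-1/2+\epsilon}$ one obtains
\begin{equation*}
\P_\mu\bigl(|\lambda_k-\E_\mu\lambda_k|\ge \tfrac12 N^{-1/2+\epsilon}\bigr)\le 2\exp(-cN^{2\epsilon}),
\end{equation*}
which already has the shape of \eqref{eqn:InitialEstimate}. The whole task is therefore reduced to the deterministic estimate
\begin{equation*}
\bigl|\E_\mu\lambda_k-\gamma_k\bigr|\le \tfrac12 N^{-1/2+\epsilon}\qquad\text{uniformly for }k\in\llbracket\alpha N,(1-\alpha)N\rrbracket.
\end{equation*}

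For this mean bound I would invoke the classical large deviation principle for $\beta$-ensembles with convex analytic potential (Ben Arous--Guionnet, cf.\ \cite{AGZ}): the empirical measure $\hat\mu_N=N^{-1}\sum_j\delta_{\lambda_j}$ satisfies an LDP at speed $N^2$ whose rate function is minimized only at $\rho$. Coupling this with the Herbst concentration above applied to the $N^{-1}\|\varphi'\|_\infty$-Lipschitz functionals $\lambda\mapsto N^{-1}\sum_j\varphi(\lambda_j)$ yields stretched-exponential concentration of $\langle\hat\mu_N,\varphi\rangle$ around $\langle\rho,\varphi\rangle$ for every smooth test function $\varphi$. Applied to a smooth mollification $\varphi_E$ of $\mathbf 1_{(-\infty,E]}$ with smoothing scale $\eta_0\ll N^{-1/2+\epsilon}$, this gives stretched-exponential control of the counting function $\hat{\mathcal N}_N(E)=\#\{j:\lambda_j\le E\}$ around $N\int_{-\infty}^E\rho$ to precision $o(N^{1/2+\epsilon})$. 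Since strict convexity and analyticity of $V$ force $\rho$ to be bounded below on compact subsets of the bulk $(A,B)$, inverting the counting function converts this counting accuracy into pointwise accuracy of $\lambda_k$ around $\gamma_k$ at the same scale, and in particular controls $|\E_\mu\lambda_k-\gamma_k|$.

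The main obstacle is the last step, namely the passage from weak-topology control of $\hat\mu_N$ to pointwise control of the quantiles $\lambda_k$: one needs strict positivity and mild regularity of $\rho$ on the bulk so that the counting-function inversion is quantitatively stable. Both are standard consequences of the equilibrium-measure analysis for strictly convex analytic potentials recalled in the next subsection of the paper. The Bakry--\'Emery concentration step, by contrast, is automatic from \eqref{eqn:LSImu} and needs no analyticity of $V$ whatsoever; it is the global information about $\rho$ that imports the analyticity hypothesis into this initial estimate.
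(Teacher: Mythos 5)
Your Step 1 (Bakry--\'Emery plus Herbst applied to the $1$-Lipschitz coordinate functional $\lambda_k$) is exactly the paper's concentration estimate \eqref{eqn:initialconcentration}, down to the constants, and that part of the argument is fine. The problem is Step 2, the reduction to the deterministic bound $|\E_\mu\lambda_k-\gamma_k|=\OO(N^{-1/2+\epsilon})$. You propose to obtain it by combining the Ben Arous--Guionnet LDP with LSI concentration of linear statistics, but this combination does not deliver the claimed ``stretched-exponential concentration of $\langle\hat\mu_N,\varphi\rangle$ around $\langle\rho,\varphi\rangle$.'' Herbst concentrates $\langle\hat\mu_N,\varphi\rangle$ around its \emph{mean} $\langle\rho_1^{(N)},\varphi\rangle$, and the LDP at speed $N^2$ only says that $\rho_1^{(N)}$ tends to $\rho$ \emph{qualitatively} in the weak topology. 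The LDP upper bound $\P(d(\hat\mu_N,\rho)\ge\epsilon)\le e^{-c(\epsilon)N^2}$ holds for each fixed $\epsilon$, with $c(\epsilon)$ and the threshold $N_0(\epsilon)$ uncontrolled as $\epsilon\to 0$; it therefore cannot be applied with $\epsilon=\epsilon_N\to 0$. Concentration around the mean plus a qualitative limit of the mean gives $|\langle\rho_1^{(N)}-\rho,\varphi\rangle|=o(1)$, not $\OO(N^{-1/2+\epsilon})$: a fictitious sequence with $\E_\mu\lambda_k=\gamma_k+1/\log N$ would be consistent with both inputs, so the argument as written cannot rule out such slow bias.

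What actually fills this gap in the paper is the loop-equation analysis of $m_N-m$ (Lemma~\ref{lem:Johansson}), which turns the Johansson/Shcherbina-type identity \eqref{eqn:firstLoop} into the bound $|m_N-m|\lesssim (N\eta)^{-1}+N^{-2}k_N$ after a contour integration that uses the \emph{analyticity} of $V$ to eliminate the $b_N$ term; the variance $k_N$ is then estimated by the spectral gap (again from convexity). The Stieltjes-transform estimate is converted to a counting-function estimate via the Helffer--Sj\"ostrand calculus (Lemma~\ref{lem:HS}), and this is precisely the quantitative control on $\rho_1^{(N)}-\rho$ that your LDP step is missing. Your closing remark that analyticity enters only through ``global information about $\rho$'' is therefore misplaced: analyticity is used in the loop equation, not merely in establishing regularity of $\rho$. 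The ``counting-function inversion'' you defer to the equilibrium-measure subsection is not supplied there; properties (a)--(c) of $\rho$ give smoothness and positivity on $(A,B)$ but say nothing about the rate of convergence of $\rho_1^{(N)}$, which is the crux of the proposition.
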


This result is a direct consequence of the following equation
(\ref{eqn:initialconcentration}) and Corollary \ref{cor:InitialAccuracy}, whose proofs are the purpose of this section. We
first state well-known facts about the equilibrium measure.

For convex  analytic potential  $V$ satisfying the asymptotic growth condition (\ref{eqn:LSImu}) (or even with weaker hypotheses on $V$, see e.g. \cite{BPS, APS}),   the equilibrium measure $\rho(s)\rd s$ associated with  $(\mu^{(N)})_{N\geq 0}$
can be defined as the unique minimizer (in the set of probability measures on $\RR$ endowed with the weak topology) of the
functional
$$
I(\nu)=
\int V(t)\rd\nu(t)-
\iint\log|t-s|\rd\nu(s)\rd\nu(t)
$$
if $\int V(t)\rd\nu(t)<\infty$, and $I(\nu)=\infty$ otherwise.
Moreover, $\rho$
has the following properties:
\begin{enumerate}[(a)]
\item The support of $\rho$ is a single interval $[A,B]$.
\item This equilibrium measure satisfies
\be
   \frac{1}{2}V'(t) = \int \frac{\rho(s)\rd s}{t-s}.
\label{equilibrium}
\ee
for any $t\in(A,B)$.
\item For any $t\in[A,B]$,
\begin{equation}\label{eqn:rho}
\rho(t)\ind t=\frac{1}{\pi}r(t)\sqrt{(t-A)(B-t)}\mathds{1}_{[A,B]}\ind t,
\end{equation}
where $r$ can be extended into an analytic function in $\CC$ satisfying
\begin{equation}\label{eqn:r}
r(z)=\frac{1}{2\pi}\int_A^B\frac{V'(z)-V'(t)}{z-t}\frac{\rd t}{\sqrt{(t-A)(B-t)}}.
\end{equation}
In particular, for convex $V$, $r$ has no zero in $\RR$.
\end{enumerate}

It is known that the particles locations cannot be far from its classical location  \cite{BenGui, Sch}: for any
 $\epsilon>0$ there are positive constants $C$, $c$, such that, for all $N\geq 1$,
\begin{align}
\P_\mu\left( \exists k\in\llbracket1,N\rrbracket\mid
 | \lambda_k-  \gamma_k| \ge \epsilon \right)\leq C e^{-c N^c  }. \label{eqn:largDev1}
\end{align}
In order to have density strictly in a compact support,
for given $R>0$,
define the following variant of $\mu^{(N)}$ conditioned to have all particles in $[-R,R]$:
\begin{equation}\label{eqn:truncMeasure}
\mu^{(N,R)}(\rd\lambda)=\frac{1}{Z_{N,R}}
\prod_{1\leq i<j\leq N}|\lambda_i-\lambda_j|^\beta\prod_{k=1}^N e^{-N\frac{\beta}{2}V(\lambda_k)}\mathds{1}_{|\lambda_k|<R}
\rd\lambda_1\dots\rd\lambda_N.
\end{equation}
Let  $\rho_k^{(N,R)}$ denote the marginals of the measure $\mu^{(N,R)}$, i.e. the
same definition as (\ref{eqn:corrFunct}), but with $\mu^{(N)}$ replaced by $\mu^{(N,R)}$.

Then Lemma 1 in \cite{BPS} states that
under condition (\ref{eqn:LSImu}) there exist some $R>0$ and  $c>0$, depending only on $V$,  such that for any $|x_1|,\dots,|x_k|\leq R$
\begin{equation}\label{eqn:BPS1}
\left|\rho^{(N,R)}_k(x_1,\dots,x_k)-\rho^{(N)}_k(x_1,\dots,x_k)\right|\leq \rho^{(N,R)}_k(x_1,\dots,x_k)e^{-c N},
\end{equation}
and for $|x_1|,\dots,|x_j|\geq R$, $|x_{j+1}|,\dots,|x_k|\leq R$,
\begin{equation}\label{eqn:BPS2}
\rho^{(N)}_k(x_1,\dots,x_k)\leq e^{-c N\sum_{i=1}^k\log |x_i|}.
\end{equation}
The last type of estimates we need are concentration and accuracy of the particles location at scale $N^{-1/2}$, in the bulk.
Concentration is a simple consequence of the Bakry-\'Emery convexity criterion for the logarithmic
Sobolev inequality (\cite{BakEme}, see also \cite{AGZ}):
define $\mathcal{H}$ by $\mu(\rd\lambda)=\frac{1}{Z_N}e^{- N \mathcal{H}(\lambda)}\rd \lambda$, and assume
\begin{equation}\label{eqn:BakEme}
\nabla^2 \mathcal{H}\geq  \sigma  \,\Id_N
\end{equation}
in the sense of partial order for positive
definite operators. Then $\mu$ satisfies a logarithmic Sobolev inequality with constant $2/(\sigma N)$:
for any probability density $f$ we have
\be\label{eqn:lsi}
\E_\mu f \log f  \le  \frac 2 { \sigma  N} \E_\mu |\nabla \sqrt f |^2 .
\ee
It is well-known that the logarithmic Sobolev inequality implies the spectral gap and,
together with Herbst's lemma, it also implies that
for any $k\in\llbracket 1,N\rrbracket$ and $x>0$
$$
\P_\mu\left(|\lambda_k-\E_\mu(\lambda_k)|>x\right)\leq 2e^{-\sigma N x^2/2 }.
$$
In our case where $\mu$ is defined by (\ref{eqn:measure}), for any $v\in\RR^N$
\be
v^*(\nabla^2 H)v=\frac { \beta} N \sum_{i<j}\frac{(v_i-v_j)^2}{(\lambda_i-\lambda_j)^2}+\frac{\beta}{2}\sum_iV''(\lambda_i)v_i^2\geq \varpi
 |v|^2
\label{eqn:Hconv},
\ee
where $\varpi$ is defined in (\ref{eqn:LSImu}).
Thus there is  a
constant $\tilde c>0$ such that for any $k$
\begin{equation}\label{eqn:initialconcentration}
\P_\mu\left(|\lambda_k-\E_\mu(\lambda_k)|>x\right)\leq 2e^{-\tilde c Nx^2},
\end{equation}
i.e. concentration at scale $1/\sqrt{N}$ holds. We now prove that accuracy at the same scale
holds inside the bulk.

The proof of the following lemma  is based on  an argument in \cite{Joh} for the polynomial case.
In the form presented here, it follows very closely the proof in \cite{Sch} for
the analytic case except that we use the logarithmic Sobolev inequality to have a more precise estimate.
We now introduce some notations needed in the proof.
\begin{itemize}
\item $m_N$ is the Stieljes transform of $\rho^{(N)}_1(s)(\rd s)$, evaluated at some $z$ with $\Im(z)>0$, and $m$ its
limit:
$$
m_N(z)=\E_\mu\left(\frac{1}{N}\sum_{k=1}^N\frac{1}{z-\lambda_i}\right)=\int_\RR\frac{1}{z-t}\rho_1^{(N)}(t)\rd t,\
m(z)=\int_\RR\frac{1}{z-t}\rho(t)\rd t.
$$
It is well-known that uniformly in any $\{\Im(z)>\epsilon\}$, $\epsilon>0$,
 $|m_N-m|\to 0$ (see e.g. \cite{AGZ}).
Along the proof of the next Lemma \ref{lem:Johansson}
 we will see that this convergence holds at speed $1/N$.

\item $s(z)=-2r(z)\sqrt{(A-z)(B-z)}$, where the square root is defined such that
$$
f(z)=\sqrt{(A-z)(B-z)}\sim z \quad \text{ as }  \quad z\to\infty;
$$
\item $b_N(z)$ is an analytic function defined by
$$
b_N(z)=\int_{\RR}\frac{V'(z)-V'(t)}{z-t}(\rho_1^{(N)}-\rho)(t)\ind t;
$$
\item finally, $c_N(z)=\frac{1}{N^2}k_N(z)+\frac{1}{N}\left(\frac{2}{\beta}-1\right)m_N'(z)$, where
$$
k_N(z)=\var_\mu\left(\sum_{k=1}^N\frac{1}{z-\lambda_i}\right).
$$
Here the $\var$ of a complex random variable
denotes $\var(X)=\E(X^2)-\E(X)^2$, i.e. without absolute value
unlike for the usual variance.
 Note
that $|\var(X)|\leq \E(|X-\E(X)|^2)$.
\end{itemize}
 The equation used by Johansson (which can be obtained by a change of variables
in (\ref{eqn:measure}) \cite{Joh} or by integration by parts \cite{Sch}), is a variation of the loop equation (see, e.g., \cite{Ey})
used in physics literatures and it takes the form
\begin{equation}\label{eqn:firstLoop}
(m_N-m)^2+s(m_N-m)+b_N=c_N.
\end{equation}

Equation  \eqref{eqn:firstLoop} expresses the difference $m_N - m$ in terms of $(m_N-m)^2$, $b_N$ and $c_N$.
In the  regime where $|m_N - m|$  is small, 
we can neglect the quadratic term. The term $b_N$ is the same order as
$|m_N-m|$ and is  difficult  to treat. As observed in \cite{APS,Sch}, for analytic  $V$,  this term vanishes
when we perform a contour integration. So we have roughly the relation
\be\label{55}
(m_N-m) \sim \frac 1 { N^2} \var_\mu\left(\sum_{k=1}^N\frac{1}{z-\lambda_k}\right),
\ee
where we dropped the less important error involving $m_N'(z)/N $ due to the extra $1/N$ factor.
 In the convex setting,
the variance can be estimated by  the logarithmic Sobolev
inequality and we immediately obtain an estimate on $m_N - m$.
 We then  follow the method in \cite {ErdRamSchYau}
to use the Helffer-Sj\"ostrand functional calculus to have an estimate on the particle  locations.
 Although it is tempting
to use  this new accuracy information on the particle locations to estimate the variance again in \eqref{55},
 this naive bootstrap
is difficult to implement. The main reason is, roughly speaking,  that
the particle location  estimate obtained from  knowing only the size of $m_N-m$
is not strong enough in the bootstrap.
The key idea in this section is the observation that {\it accuracy information on particle locations can be used to  improve the local convexity
of the measure $\mu$ in  the direction involving the  differences of particle locations}, see Lemma \ref{lem:localConvexity}.
Now we are able to complete the bootstrap argument and obtain a more accurate estimate on $m_N-m$.  Since this argument can be repeated, we can estimate the locations of particles  up to the optimal scale in the bulk.

\begin{lemma}\label{lem:Johansson}
Let $\delta>0$.
For $z=E+\ii \eta$ with $A+\delta<E<B-\delta$
assume that
\begin{equation}\label{eqn:kNTo0}
\frac{1}{N^2}k_N(z)\to 0
\end{equation}
 as $N\to\infty$ uniformly in $\eta\geq N^{-1+a}$ for some $0<a<1$. Then there
 are  constants $c,\e>0$  such that for any
$N^{-1+a}\le\eta\le\e$,
$A+\delta<E<B-\delta$,
\begin{equation}\label{eqn:lemJohansson}
|m_N(z)-m(z)|\leq c\left(\frac{1}{N\eta}+\frac{1}{N^2}k_N(z)\right).
\end{equation}
\end{lemma}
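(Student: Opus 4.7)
The plan is to read \eqref{eqn:firstLoop} as a quadratic equation in $(m_N - m)(z)$, select the branch that tends to zero, and bootstrap. Because $V$ is convex and analytic, $r$ is strictly positive on $[A,B]$, and by analyticity together with a compactness argument $|s(z)|$ is bounded below by a positive constant $\kappa=\kappa(\delta)$ on the entire region $\{E+\ii\eta : A+\delta\leq E\leq B-\delta,\,0<\eta\leq\epsilon\}$. With this inversion available, the loop equation may be rewritten as
\[
m_N(z) - m(z) \;=\; \frac{c_N(z) - b_N(z)}{s(z)} \;-\; \frac{(m_N(z) - m(z))^2}{s(z)},
\]
after selecting the branch that remains bounded. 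It thus suffices to control $c_N$ and $b_N$ quantitatively and to absorb the quadratic remainder via a bootstrap.

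\textbf{Estimating the source terms.} The term $c_N$ splits into a variance piece $k_N/N^2$, which is directly controlled by the standing hypothesis \eqref{eqn:kNTo0}, plus the remainder $\tfrac{1}{N}\bigl(\tfrac{2}{\beta}-1\bigr)m_N'(z)$. The crude estimate $|m_N'(z)|\leq \Im m_N(z)/\eta$ combined with $\Im m_N = \OO(1)$ for $E$ in the bulk yields $|c_N(z)|\leq C\bigl(\tfrac{1}{N\eta}+\tfrac{k_N(z)}{N^2}\bigr)$. For the term $b_N$, the analyticity of $V$ is decisive: the divided-difference kernel $(V'(z)-V'(t))/(z-t)$ is holomorphic in $t$ in a complex strip around $\RR$. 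After replacing $\mu$ by the truncated measure $\mu^{(N,R)}$ of \eqref{eqn:truncMeasure} — which is allowable up to the exponentially small errors of \eqref{eqn:BPS1}--\eqref{eqn:BPS2} — Cauchy's theorem yields the representation
\[
b_N(z) \;=\; \frac{1}{2\pi\ii}\oint_\Gamma \frac{V'(z)-V'(w)}{z-w}\,\bigl(m_N(w)-m(w)\bigr)\,\rd w,
\]
where $\Gamma$ is a simple closed contour surrounding $[-R,R]$ at distance of order one from the real axis, contained in the analyticity region of $V$. In particular $|b_N(z)|\leq C\sup_{w\in\Gamma}|m_N(w)-m(w)|$ uniformly in $z$ in the region of interest.

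\textbf{Bootstrap and main obstacle.} Assembling the pieces, the loop equation produces the functional inequality
\[
|m_N(z)-m(z)| \;\leq\; C\Bigl(\tfrac{1}{N\eta} + \tfrac{k_N(z)}{N^2} + \sup_{w\in\Gamma}|m_N(w)-m(w)| + |m_N(z)-m(z)|^2\Bigr).
\]
A rough initial bound $\sup_{w\in\Gamma}|m_N(w)-m(w)|=o(1)$ is available from the large deviation estimate \eqref{eqn:largDev1} (or from the concentration bound \eqref{eqn:initialconcentration}), and this smallness is exactly what is needed to absorb the quadratic term into the left-hand side. One then iterates: first apply the scheme on a fixed outer contour $\Gamma_0$ (where $\eta$ is of order one and $|s|$ is large) to improve $\sup_{\Gamma_0}|m_N-m|$; then feed this improved bound back into the representation of $b_N$ to control $|m_N(z)-m(z)|$ for the target $z$ with small $\eta$. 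After finitely many rounds the right-hand side stabilizes at the claimed order and \eqref{eqn:lemJohansson} is reached. The principal obstacle is precisely this self-referential structure — the bound on $\Gamma$ must itself be produced from the same loop equation — together with the need to track constants carefully as $\eta\downarrow N^{-1+a}$, where the $C/(N\eta)$ term saturates and the naive bound on $m_N'/(N\eta^2)$ would have been inadequate.
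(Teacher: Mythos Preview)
Your treatment of $b_N$ is where the argument does not close. The contour representation you write down is correct, and it does give $|b_N(z)|\le C\sup_{\Gamma}|m_N-m|$. But when you feed this back into the loop equation on a fixed outer contour $\Gamma_0$ and take the supremum, you obtain a self-consistent inequality of the shape
\[
\kappa\,\sup_{\Gamma_0}|m_N-m|\;\le\;\sup_{\Gamma_0}|c_N|\;+\;C\sup_{\Gamma_0}|m_N-m|\;+\;\bigl(\sup_{\Gamma_0}|m_N-m|\bigr)^{2},
\qquad \kappa=\inf_{\Gamma_0}|s|.
\]
The linear term on the right can only be absorbed if $C<\kappa$, and nothing in your setup guarantees this: $C$ scales with the length of $\Gamma$ times the size of the divided-difference kernel, while $\kappa$ scales the same way with $|s|=2|r f|$, and the ratio is of order one for generic analytic $V$ regardless of how the contour is placed. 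Nesting contours does not help either, since the maximum principle for $m_N-m$ (analytic outside the support, $O(z^{-2})$ at infinity) only yields $\sup_{\Gamma_{\mathrm{outer}}}\le\sup_{\Gamma_{\mathrm{inner}}}$, which is the wrong direction for your iteration. So ``finitely many rounds'' do not drive the bound down to $O(1/N)$.

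The paper sidesteps this entirely with a different use of analyticity (the device is from \cite{APS,Sch}): one divides the whole loop equation \eqref{eqn:firstLoop} by $r(\xi)(z-\xi)$ and integrates over a contour $\mathcal L$ enclosing the support. Because $b_N$ is entire and $r$ has no zeros inside $\mathcal L$, the term $b_N/r$ is analytic there and its contour integral \emph{vanishes identically}. What survives is an identity \eqref{eqn:withoutB} relating $-2f(z)(m_N-m)(z)$ to a contour integral of $(m_N-m)^2$ and $c_N$ only. On nested contours $\mathcal L_1,\mathcal L_2$ this gives a purely quadratic self-consistent bound $M\le c(M^2+N^{-1})$, which together with the a priori smallness of $M$ forces $M=O(N^{-1})$. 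Only then does one return to the pointwise equation at the target $z$ and estimate $c_N$ via $\tfrac{1}{N}|m_N'|\le\tfrac{1}{N\eta}(|m_N-m|+|\Im m|)$ and the continuity argument you describe. Your overall architecture (truncation to $\mu^{(N,R)}$, quadratic reading of the loop equation, branch selection by continuity) is right; what is missing is the single algebraic move that kills $b_N$ rather than bounding it.
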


\begin{proof}
First, for technical contour integration reasons, it will be easier to consider the measure (\ref{eqn:truncMeasure})
instead of $\mu^{(N)}$ here. More precisely, define
\begin{align*}
m^{(R)}_N(z)&=
\E_{\mu^{(N,R)}}\left(\frac{1}{N}\sum_{k=1}^N\frac{1}{z-\lambda_i}\right)=
\int_\RR\frac{1}{z-t}\rho_1^{(N,R)}(t)\rd t,\\
k_{N}^{(R)}(z)&=\var_{\mu^{(N,R)}}\left(\sum_{k=1}^N\frac{1}{z-\lambda_i}\right),\\
c_N^{(R)}(z)&=\frac{1}{N^2}k_N^{(R)}(z)+\frac{1}{N}\left(\frac{2}{\beta}-1\right){m_N^{(R)}}'(z).
\end{align*}
Then it is a direct consequence of (\ref{eqn:BPS1}) and (\ref{eqn:BPS2}) that there are constants $c>0$ and
$R>0$ such that
uniformly on
$\eta\geq N^{-10}$ (or any power of $N$),
\begin{equation}\label{eqn:expDiff}
|m^{(R)}_N-m_N|=\OO\left(e^{-c N}\right),\ \ \ |k_{N}^{(R)}-k_{N}|=\OO(e^{-c N}).
\end{equation}
Consider the rectangle with vertices
$
2R+\ii N^{-10},- 2R+\ii N^{-10}, -2R-\ii N^{-10}, 2R-\ii N^{-10}
$,
call $\mathcal{L}$ the corresponding clockwise closed contour and  $\mathcal{L}'$
the one consisting only in the horizontal pieces, with the same orientation.
{F}rom $(\ref{eqn:firstLoop})$, we obviously have, for $z\not\in\mathcal{L}'$,
$$
\frac{1}{ 2\pi\ii}\int_{\mathcal{L}'}\frac{(m_N(\xi)-m(\xi))^2+s(\xi)(m_N(\xi)-m(\xi))
+b_N(\xi)-c_N(\xi)}
{r(\xi)(z-\xi)}\rd\xi=0.
$$
Note that the above expression makes sense for large enough $N$, because then $r$ has no zero on $\mathcal{L}$.
Using (\ref{eqn:expDiff}), this implies, for $\eta\geq N^{-1}$,
$$
\frac{1}{2\pi\ii}\int_{\mathcal{L}'}\frac{(m^{(R)}_N(\xi)-m(\xi))^2
+s(\xi)(m^{(R)}_N(\xi)-m(\xi))
+b_N(\xi)-c^{(R)}_N(\xi)}{r(\xi)(z-\xi)}\rd\xi=
\OO(e^{-c N}).
$$
Now, as $\rho_1^{(N,R)}$ and $\rho$ are supported on $[-R,R]$, $m_N^{(R)}-m$
and $c_N^{(R)}$ are uniformly $\OO(1)$ in the vertical segments of $\mathcal{L}$. Consequently, from the above equation
$$
\frac{1}{2\pi\ii}\int_{\mathcal{L}}\frac{(m^{(R)}_N(\xi)-m(\xi))^2+s(\xi)(m^{(R)}_N(\xi)-m(\xi))+b_N(\xi)-c^{(R)}_N(\xi)}{r(\xi)(z-\xi)}\rd\xi=
\OO(N^{-10}).
$$
As $b_N$ and $r$ are analytic inside $\mathcal{L}$, for $z$  outside
 $\mathcal{L}$ we get
$$
\frac{1}{2\pi\ii}\int_{\mathcal{L}}\frac{(m^{(R)}_N(\xi)-m(\xi))^2+s(\xi)(m^{(R)}_N(\xi)-m(\xi))-c^{(R)}_N(\xi)}{r(\xi)(z-\xi)}\rd\xi=
\OO(N^{-10}).
$$
Remember we define  $f(z)=\sqrt{(A-z)(B-z)}$ uniquely by $f(z)\sim z$ as $z\to\infty$. Moreover, $|m_N^{(R)}-m|(z)=\OO(z^{-2})$ as $|z|\to\infty$ because
$\rho$ and $\rho_1^{(N,R)}$ are compactly supported:
\begin{multline*}
|m_N^{(R)}(z)-m(z)|=\left|\int_{-R}^R\frac{\rho(t)-\rho^{(N,R)}(t)}{z-t}\rd t\right|\\=\left|\int_{_R}^R(\rho(t)-\rho^{(N,R)}(t))\left(\frac{1}{z}+\OO\left(\frac{1}{z^2}\right)\right)\rd t\right|=\OO\left(z^{-2}\right).
\end{multline*}
Consequently, the function $s(m^{(R)}_N-m)/r=-2f(m^{(R)}_N-m)$ is $\OO(z^{-1})$ as $|z|\to\infty$. Moreover, it is analytic outside $\mathcal{L}$, so
the Cauchy integral formula yields
$$
\frac{1}{2\pi\ii}\int_{\mathcal{L}}\frac{s(\xi)(m^{(R)}_N(\xi)-m(\xi))}{r(\xi)(z-\xi)}\rd\xi=-2f(z)(m_N^{(R)}-m)(z),
$$
proving
\begin{equation}\label{eqn:withoutB}
-2f(z)(m_N^{(R)}(z)-m(z))=
-\frac{1}{2\pi\ii}\int_{\mathcal{L}}
\frac{(m^{(R)}_N(\xi)-m(\xi))^2-c^{(R)}_N(\xi)}{r(\xi)(z-\xi)}\rd\xi+
\OO(N^{-10}).
\end{equation}
Consider now the following rectangular contours, defined by their vertices:
\begin{align}
{\mathcal{L}}_1:\ &\notag
R+\epsilon+\ii \epsilon,- R-\epsilon+\ii \epsilon, -R-\epsilon-\ii \epsilon, R+\epsilon-\ii \epsilon,\\
{\mathcal{L}}_2:\ &\label{eqn:contour}
R+2\epsilon+2\ii \epsilon,- R-2\epsilon+2\ii \epsilon, -R-2\epsilon-2\ii \epsilon, R+2\epsilon-2\ii \epsilon,
\end{align}
where $\epsilon>0$ is fixed, small enough such that all zeros of $r$ are strictly outside $\mathcal{L}_2$.
For $z$ inside $\mathcal{L}_2$ and $\Im(z)\geq N^{-1}$, by the Cauchy formula, equation (\ref{eqn:withoutB}) implies that
\begin{multline}\label{eqn:onL1}
-2s(z)(m_N^{(R)}(z)-m(z))\\=-(m^{(R)}_N(z)-m(z))^2+c^{(R)}_N(z)
-\frac{1}{2\pi\ii}\int_{\mathcal{L}_2}\frac{(m^{(R)}_N(\xi)-m(\xi))^2-c^{(R)}_N(\xi)}
{r(\xi)(z-\xi)}\rd\xi+
\OO(N^{-10}).
\end{multline}
In the above expression, if now $z$ is on $\mathcal{L}_1$,
$|z-\xi|>\epsilon$, and on $\mathcal{L}_2$
$|r|$
is separated away
from zero by a positive universal constant. Moreover,
$c_N^{(R)}(\xi)$ can be bounded in the following way: for any constants $\alpha_1,\dots,\alpha_N\in [-R-\epsilon,R+\epsilon]$,
\begin{multline*}
\frac{1}{N^2}\left|\var_{\mu^{(N,R)}}\left(\sum_{k=1}^N\frac{1}{\xi-\lambda_k}\right)\right|
\leq
\frac{1}{N^2}
\E_{\mu^{(N,R)}}\left(\left|\sum_{k=1}^N
\frac{1}{\xi-\lambda_k}-\frac{1}{\xi-\alpha_k}\right|^2\right)\\\leq \frac{1}{\epsilon^4N}
\sum_{k=1}^N\E_{\mu^{(N,R)}}\left(|\lambda_k-\alpha_k|^2\right),
\end{multline*}
because for any $k$, we have $|\xi-\lambda_k|>\epsilon$, $|\xi-\alpha_k|>\epsilon$.
Now, choose $\alpha_k=\E_{\mu}(\lambda_k)$. By (\ref{eqn:BPS2}),
for large enough $N$ any $\alpha_k$, $1\leq k\leq N$,
is in $[-R-\epsilon,R+\epsilon]$ indeed. Moreover, by (\ref{eqn:BPS1}),
$$
\left|\E_{\mu^{(N,R)}}\left(|\lambda_k-\alpha_k|^2\right)
-\E_{\mu}\left(|\lambda_k-\alpha_k|^2\right)
\right|
\leq e^{-cN}\E_{\mu^{(N,R)}}\left(|\lambda_k-\alpha_k|^2\right),
$$
and, by the spectral gap inequality for $\mu$,
$\E_{\mu}\left(|\lambda_k-\alpha_k|^2\right)=\OO(N^{-1})$.
This together proves that $k^{(R)}_N(\xi)$ is $\OO(N^{-1})$, uniformly on the contour $\mathcal{L}_2$.
Moreover,
$\frac{1}{N}{m^{(R)}_N}'=\OO(N^{-1})$, so finally $c_N^{(R)}(\xi)$ is
uniformly $\OO(N^{-1})$ on $\mathcal{L}_2$ and  (\ref{eqn:onL1}) implies
$$
-2s(z)(m_N^{(R)}(z)-m(z))=-(m^{(R)}_N(z)-m(z))^2(z)+\OO\left(\sup_{\mathcal{R}_2}|m_N^{(R)}-m|^2\right)+
\OO(N^{-1}).
$$
Moreover, from the maximum principle for analytic functions,
$\sup_{\mathcal{L}_2}|m_N^{(R)}-m|
\leq
\sup_{\mathcal{L}_1}|m_N^{(R)}-m|$, so the previous equation implies
$$
\sup_{\mathcal{L}_1}|m_N^{(R)}-m|=\OO\left(\sup_{\mathcal{L}_1}|m_N^{(R)}-m|^2+\frac{1}{N}\right).
$$
We know that $\rho_1^{(N)}(s)\rd s$ converges weakly to $\rho (s)\rd s$ (see \cite{AGZ}),
so by (\ref{eqn:BPS1}) and (\ref{eqn:BPS2}) $\rho_1^{(N,R)}(s)\rd s$ converges weakly to  $\rho (s)\rd s$.
On $\mathcal{L}_1$, $z$ is at distance at least $\epsilon$ from the support of both $\rho_1^{(N,R)}(s)\rd s$
and $\rho(s)\rd s$ so, on $\mathcal{L}_1$, $m_N^{(R)}-m$ converges
 uniformly to $0$. Together with the above equation, this implies that
$$
\sup_{\mathcal{L}_1}|m_N^{(R)}-m|=\OO\left(\frac{1}{N}\right).
$$
By the maximum principle the same estimate holds outside $\mathcal{L}_1$,
in particular on $\mathcal{L}_2$, so
equation (\ref{eqn:onL1}) implies that for $z$ inside $\mathcal{L}_1$
\begin{equation}\label{eqn:ReFirstLoop}
-2s(z)(m_N^{(R)}(z)-m(z))=-(m^{(R)}_N(z)-m(z))^2+c^{(R)}_N(z)+\OO\left(\frac{1}{N}\right).
\end{equation}
Moreover,
\begin{multline}\label{eqn:mNPrime}
\frac{1}{N}  | {m_N^{(R)}}'(z)| = \frac{1}{N^2 }  \left |
\E_{\mu^{(N,R)}}    \sum_j  \frac {1}  {(z-\lambda_j)^2} \right |\\
\le  \frac{1}{N \eta  } \Im   \, m^{(R)}_N(z)\leq \frac{1}{N \eta  } |m^{(R)}_N(z)-m(z)|+\frac{1}{N\eta}|\Im\ m(z)|\leq
\frac{1}{N \eta  } |m^{(R)}_N(z)-m(z)|+\frac{c}{N\eta}
\end{multline}
for some constant $c$.
We used the well-known fact that $\Im\ m$ is uniformly bounded on the upper half plane\footnote{This follows for example from properties of the Cauchy operator, see p 183 in \cite{De1}.}.
On the set $A+\delta <E<B-\delta$ and $|\eta|<\epsilon$, we have  $\inf |s|>0$. Therefore
(\ref{eqn:ReFirstLoop}) takes the form
\begin{equation}\label{eqn:upBoundSt}
\left(1+\OO\left(\frac{1}{N\eta}\right)\right)(m^{(R)}_N(z)-m(z))=
\OO\left(|m^{(R)}_N(z)-m(z)|^2+\frac{1}{N^2}k^{(R)}_N(z)+\frac{1}{N\eta}\right).
\end{equation}
{F}rom the hypothesis (\ref{eqn:kNTo0}), if $N^{-1+a}\leq \eta\leq\epsilon$
and $A+\delta <E<B-\delta$, then
\begin{equation}\label{eqn:quadratic}
|m^{(R)}_N-m|\leq c|m^{(R)}_N-m|^2+\epsilon_N,
\end{equation}
for some $c>0$ and $\epsilon_N\to 0$ as $N\to\infty$.
For large $N$, (\ref{eqn:quadratic}) implies that
$|m^{(R)}_N-m|\leq 2\epsilon_N$ or $|m^{(R)}_N-m|\geq 1/c-2\epsilon_N$.
 Together with $|m^{(R)}_N-m|(E+\epsilon \ii)\to 0$
and the continuity of
$|m^{(R)}_N-m|$ in the upper half plane, this implies that
$|m^{(R)}_N-m|\leq 2\epsilon_N$ and therefore
$|m^{(R)}_N-m|\to 0$ uniformly on $N^{-1+a}\leq \eta\leq\epsilon$, $A+\delta <E<B-\delta$.
Consequently, using  (\ref{eqn:upBoundSt}), this proves that
there is a constant $c>0$  such that for any
$\eta\geq N^{-1+a}$,
$A+\delta<E<B-\delta$,
$$
|m^{(R)}_N(z)-m(z)|\leq c\left(\frac{1}{N\eta}+\frac{1}{N^2}k^{(R)}_N(z)\right).
$$
The same conclusion remains when substituting $m_N^{(R)}$ (resp. $k_N^{(R)}$)
by $m_N$ (resp. $k_N$) thanks to (\ref{eqn:BPS1}) and (\ref{eqn:BPS2}).

\end{proof}
\vspace{0.3cm}

To prove accuracy results for $\mu$, the above Lemma \ref{lem:Johansson} will be combined with the following one.

\begin{lemma}\label{lem:HS} Let $\delta<(B-A)/2$ and $E\in[A+\delta,B-\delta]$ and $0<\eta<\delta/2$.
Define a function $f= f_{E,\eta}$: $\R\to \R$
 such that $f(x) = 1$ for $x\in (-\infty, E-\eta]$, $f(x)$ vanishes
for  $x\in [E+\eta, \infty)$, moreover
 $|f'(x)|\leq c\eta^{-1}$ and $|f''(x)|\leq c\eta^{-2}$, for some constant $c$.
 Let $\wt\rho$ be an arbitrary signed measure
and let $S(z)= \int (z-x)^{-1}\wt\rho(x)\rd x$ be its Stieltjes transform.
Assume that, for any $x\in[A+\delta/2,B-\delta/2]$,
\begin{equation}\label{eqn:cond1}
\left| S(x+iy)\right|\leq \frac{ U}{Ny}\;\;  \mbox{for}\;\; \eta <y<1 ,\;\;\mbox{and}\;\;
|\Im\, S(x+iy)|\leq \frac{ U}{Ny}\;\; \mbox{for}\;\; 0<y<\eta.
\end{equation}
 Assume moreover that $\int_\RR\wt\rho(\lambda)\rd\lambda=0$ and that there is a real 
constant $\mathcal{T}$ such that 
\begin{equation}\label{eqn:cond2}
\int_{[-\mathcal{T},\mathcal{T}]^c}  |\lambda\wt\rho(\lambda)|\rd\lambda   \le
\frac{U}{N}.
\end{equation}
Then for some  constant $C>0$, independent of $N$ and $E\in [A+\delta,B-\delta]$, we have
$$
\left|\int f_E(\lambda)\wt\rho(\lambda)\rd\lambda \right|  \le
\frac{C U|\log\eta| }{N}.
$$
\end{lemma}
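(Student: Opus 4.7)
The plan is to apply the Helffer--Sj\"ostrand functional calculus. With a smooth even cutoff $\chi:\RR\to[0,1]$ satisfying $\chi\equiv 1$ on $[-1/2,1/2]$ and $\chi\equiv 0$ outside $[-1,1]$, set the almost-analytic extension $\tilde f(x+iy)=(f(x)+iyf'(x))\chi(y)$. Since $\int(\lambda-z)^{-1}\wt\rho(\lambda)\rd\lambda=-S(z)$, the representation
\[
\int f(\lambda)\wt\rho(\lambda)\rd\lambda \;=\; -\frac{1}{\pi}\iint_{\CC}\partial_{\bar z}\tilde f(z)\,S(z)\,\rd x\,\rd y
\]
holds, with $\partial_{\bar z}\tilde f=\frac{i}{2}[\,yf''(x)\chi(y)+(f(x)+iyf'(x))\chi'(y)\,]$. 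Taking real parts and folding $y<0$ onto $y>0$ via the reflection symmetry $S(\bar z)=\overline{S(z)}$, I rewrite this as
\[
\int f\wt\rho\;=\;\frac{1}{\pi}\int_0^\infty\!\!\int_{\RR}\!\Bigl[\bigl(yf''(x)\chi(y)+f(x)\chi'(y)\bigr)\Im S(x+iy)+yf'(x)\chi'(y)\Re S(x+iy)\Bigr]\rd x\,\rd y.
\]

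The dominant piece is $\iint yf''\chi\,\Im S$; I partition $y\in(0,1]$ into three subregions. For $0<y\le\eta$ the hypothesis $|\Im S|\le U/(Ny)$ gives $y|\Im S|\le U/N$, which combined with $\int|f''|\rd x\le C/\eta$ over a $y$-slab of width $\eta$ produces a contribution $O(U/N)$. For $\eta\le y\le 2\eta$ the same direct estimate based on $|S|\le U/(Ny)$ again gives $O(U/N)$. The critical region is $2\eta\le y\le 1$: there I integrate by parts in $x$ (boundary terms vanish because $\spt f'\subset[E-\eta,E+\eta]$), obtaining
\[
\int yf''(x)\chi(y)\Im S(x+iy)\rd x\;=\;-\int yf'(x)\chi(y)\,\Im S'(x+iy)\rd x.
\]
The circle $\{|\xi-(x+iy)|=y/2\}$ sits entirely in $\{\Im\xi\ge\eta\}\cap\{\Re\xi\in[A+\delta/2,B-\delta/2]\}$, so $|S(\xi)|\le 2U/(Ny)$ holds there, and the Cauchy integral formula yields $|S'(x+iy)|\le CU/(Ny^2)$. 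Together with $\int|f'|\rd x\le C$ this makes the integrand of size $CU/(Ny)$ per $y$, and $\int_{2\eta}^{1}\rd y/y$ supplies the logarithmic factor $|\log\eta|$, giving an $O(U|\log\eta|/N)$ contribution.

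The $\chi'$-terms are supported in $y\in[1/2,1]$ where $|S|,|\Im S|\le 2U/N$. The remaining subtlety is that $f$ is not integrable in $x$ (it tends to $1$ at $-\infty$), so $\int f\chi'\Im S\rd x$ is not obviously finite. I exploit the identity $\int\Im S(x+iy)\rd x=0$, which is immediate from $\int\wt\rho=0$ via Fubini, to subtract the constant $1$ from $f$ in that term; what remains involves $f-1$, supported on $[E-\eta,\infty)$. The contribution from $|x|>\mathcal{T}$ is then bounded by $\int_{|x|>\mathcal{T}}|\wt\rho|\rd\lambda\le U/(N\mathcal{T})$, controlled by the tail hypothesis $\int_{|x|>\mathcal{T}}|x\wt\rho|\rd\lambda\le U/N$. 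Alternatively one may introduce at the outset a macroscopic spatial cutoff $\phi$ supported in $[A+\delta/2,B-\delta/2]$ and equal to $1$ on $[E-\eta,E+\eta]$, and treat the correction $\int f(1-\phi)\wt\rho$ via the zero-mean and tail hypotheses.

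The main obstacle is that a direct application of $|S|\le U/(Ny)$ to the dominant integrand only yields $O(U/(N\eta))$, which is too weak. It is precisely the integration by parts in $x$ -- trading a derivative of $f$ for one of $S$ and invoking the Cauchy estimate for $|S'|$ -- that converts the harmful $1/\eta$ loss into the desired $|\log\eta|$ gain. A secondary technical point is that the Cauchy circles must stay inside the region of validity of the hypothesis on $|S|$, which is why the intermediate slab $\eta\le y\le 2\eta$ must be handled separately without integration by parts, and why the non-compact support of $f$ must be managed through the zero-mean and tail hypotheses on $\wt\rho$.
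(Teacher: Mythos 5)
Your proof is essentially the same Helffer--Sj\"ostrand argument as the paper's: split the $y$-integration, treat the small-$y$ slabs directly using the $|\Im S|\le U/(Ny)$ bound, and for larger $y$ integrate by parts in $x$ to trade $f''$ for $f'$ and pick up the $\int\rd y/y$ logarithm. The one place where you diverge technically is the treatment of $S'$ after the $x$-integration-by-parts. The paper substitutes $\Im S\mapsto S$ (since $f''$ and $\chi$ are real), integrates by parts in $x$, then \emph{once more in $y$} using the Cauchy--Riemann relation $\partial_x S=-\ii\partial_y S$; this moves the derivative off $S$ entirely, leaving $\int\partial_y(y\chi)f'S$ plus a boundary term at $y=\eta$, both estimated directly from $|S|\le U/(Ny)$. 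You instead stop after one integration by parts and bound $|S'(x+\ii y)|\lesssim U/(Ny^2)$ via the Cauchy integral formula. These are equivalent ways of exploiting analyticity of $S$; neither buys anything the other doesn't, and your route works.

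However, the specific claim ``the circle $\{|\xi-(x+iy)|=y/2\}$ sits entirely in $\{\Im\xi\ge\eta\}\cap\{\Re\xi\in[A+\delta/2,B-\delta/2]\}$'' is false for $y$ of order $\delta$ and larger: the real part of $\xi$ ranges over $[x-y/2,\,x+y/2]$ with $x\in[E-\eta,E+\eta]$ and $E\in[A+\delta,B-\delta]$, so you need $\eta+y/2\le\delta/2$, i.e. $y\lesssim\delta$. For $y$ up to $1$ the circle exits the strip on which the hypothesis $|S|\le U/(Ny)$ is assumed (and for $y$ near $1$ its top exits $\Im\xi<1$ as well), so the Cauchy estimate is not justified there as written. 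One also cannot simply discard the regime $y\gtrsim\delta$ by a trivial bound, since $\iint_{y\gtrsim\delta}y|f''||\chi||\Im S|\lesssim U/(N\eta\delta)$ is too large; the integration by parts is still needed there. The fix is easy: use a Cauchy circle of radius $\min(y/2,c\delta)$ with $c$ small; on it the hypothesis bound applies and gives $|S'|\lesssim U/(Ny\cdot\min(y,\delta))$, which after the $\int y f'(\cdot)\,\rd y$ integration still yields $O(U(|\log\eta|+\delta^{-1})/N)=O(U|\log\eta|/N)$. So the gap is purely technical, but it is a step that fails as stated and needs to be patched.

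One further remark: in the $\chi'$-term your device of subtracting $1$ from $f$ using $\int\Im S(\cdot+iy)\rd x=0$ only removes the tail at $-\infty$; $f-1$ still equals $-1$ on $[E+\eta,\infty)$, so you must still use the decay of $S$ coming from $\int\wt\rho=0$ and the tail hypothesis \eqref{eqn:cond2} at $+\infty$. This matches how the paper (implicitly, via the cited formula (B.13)) handles the non-integrability of $f$, but since you raised the issue you should be aware that a single constant subtraction does not resolve it on both ends.
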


\begin{proof}
Our starting point, relying on the
Helffer-Sj\"ostrand functional calculus, is formula (B.13) in \cite{ErdRamSchYau}:
\begin{align}\label{eqn:HSbound1}
\left|\int_{-\infty}^\infty f_E(\lambda)\wt\rho(\lambda)\rd\lambda\right|
\leq&
C\left|\iint y f_E''(x)\chi(y)\Im S(x+\ii y)\rd x\rd y\right|\\
&+\label{eqn:HSbound2}
C\iint\left(|f_E(x)|+|y||f_E'(x)|\right)|\chi'(y)|\left|S(x+\ii y)\right|\rd x\rd y,
\end{align}
for some universal $C>0$, and
where $\chi$ is a smooth cutoff function with support in $[-1, 1]$, with $\chi(y)=1$ for $|y| \leq 1/2$ and with bounded derivatives.

Using (\ref{eqn:cond1}) and (\ref{eqn:cond2}), the support of $\chi'$ being
included in $1/2\leq|y|\leq 1$, and the fact that $f_E'$ is $\OO(\eta^{-1})$
on an interval of size $\OO(\eta)$, the term (\ref{eqn:HSbound2})
is easily bounded by $\OO\left(\frac{U}{N}\right)$.
Concerning the right hand side of (\ref{eqn:HSbound1}), following \cite{ErdRamSchYau} we split it depending on $0<y<\eta$ and $\eta<y<1$. Note that
by symmetry we only need to consider positive $y$.
The integral on the first integration regime is easily bounded by
$$
\left|\iint_{0<y<\eta}y f_E''(x)\chi(y)\Im S(x+\ii y)\rd x\rd y\right|=\OO\left(\iint_{|x-E|<\eta, 0<y<\eta}y\eta^{-2}\frac{U}{Ny}\rd x\rd y\right)=\OO\left(\frac{U}{N}\right).
$$
For the second integral, as $f_E''$ and $\chi$ are real, we can substitute $\Im m$ by $m$ and use the analyticity of $m$ when integrating by parts (first in $x$, then in $y$):
\begin{align*}
\left|\iint_{\eta<y}y f_E''(x)\chi(y)\Im S(x+\ii y)\rd x\rd y\right|
\leq&\left|\iint_{\eta<y}y f_E''(x)\chi(y)S(x+\ii y)\rd x\rd y\right|\\
=&\left|\iint_{\eta<y}y f_E'(x)\chi(y)S'(x+\ii y)\rd x\rd y\right|\\
\leq&
\left|\iint_{\eta<y}\partial_y(y\chi(y)) f_E'(x)S(x+\ii y)\rd x\rd y\right|\\
&+
\left|\int\eta f_E'(x)\chi(\eta)S(x+\ii \eta)\rd x\right|.
\end{align*}
This last integral is easily bounded by $\OO(U/N)$ using $(\ref{eqn:cond1})$. Concerning the previous one, as $f_E'=\OO(\eta^{-1})$, $|x-E|<\eta$ for non vanishing $f_E'$, $\partial_y(y\chi(y))=\OO(1)$ and $S(x+\ii y)=\OO(U/(Ny))$,
this is bounded by
$$\OO\left(\frac{U}{N}\int_{\eta}^1\frac{\rd y}{y}\right)=\OO\left(\frac{U|\log\eta|}{N}\right),$$
concluding the proof.
\end{proof}

As a corollary of Lemmas \ref{lem:Johansson} and \ref{lem:HS}, we get the accuracy at scale $1/\sqrt{N}$ for
the $\lambda_k$'s in the bulk.

\begin{corollary}\label{cor:InitialAccuracy}
For any $\alpha>0$ and $\epsilon>0$ we have
$$
|\gamma^{(N)}_k-\gamma_k|=\OO\left(N^{-1/2+\epsilon}\right)
$$
uniformly in $k\in\llbracket \alpha N,(1-\alpha)N\rrbracket$ where
$\gamma_k^{(N)}$ and $\gamma_k$ are defined by
$$
\int_{-\infty}^{\gamma^{(N)}_k}
\rho_1^{(N)}(s)\rd s=\frac{k}{N},\;\; \mbox{and}  \;\; \int_{-\infty}^{\gamma_k}
\rho(s)\rd s=\frac{k}{N}.
$$
\end{corollary}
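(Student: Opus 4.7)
My plan is to chain the two preceding lemmas: first apply Lemma~\ref{lem:Johansson} to control $m_N-m$ down to scales $\eta\sim N^{-1/2+\epsilon}$, then feed that estimate into Lemma~\ref{lem:HS} for the signed measure $\tilde\rho=\rho_1^{(N)}-\rho$, and finally convert the resulting bound on $\int f_{\gamma_k}\,\tilde\rho$ into a quantile estimate for $\gamma_k^{(N)}-\gamma_k$ via monotonicity of the cumulative distribution functions.

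The first task would be to verify the hypothesis $k_N(z)/N^2\to 0$ of Lemma~\ref{lem:Johansson} using the logarithmic Sobolev inequality \eqref{eqn:lsi}. For $X(\bla)=\sum_j(z-\la_j)^{-1}$, the identity $\Im(z-\la)^{-1}=\eta|z-\la|^{-2}$ gives $|\nabla X|^2=\sum_j|z-\la_j|^{-4}\le\eta^{-3}\sum_j\Im(z-\la_j)^{-1}$; combined with $|k_N(z)|\le\E_\mu|X-\E_\mu X|^2$ and the Poincar\'e inequality implied by \eqref{eqn:lsi}, this yields
\begin{equation*}
k_N(z)\le\frac{2}{\varpi N}\E_\mu|\nabla X|^2\le\frac{C\,\Im\,m_N(z)}{\eta^3}\le\frac{C}{\eta^3}
\end{equation*}
uniformly in $E\in[A+\delta,B-\delta]$ and $\eta\ge N^{-1/2}$, the last inequality resting on boundedness of $\Im\,m_N$ in the bulk (which in turn would be obtained by a short bootstrap: the crude estimate $\Im\,m_N\le 1/\eta$ gives $k_N\le C/\eta^4$, and a preliminary application of Lemma~\ref{lem:Johansson} on a slightly larger $\eta$-range then forces $m_N\to m$ and thus $\Im m_N$ bounded). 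Applying Lemma~\ref{lem:Johansson} with $a=1/2$ would then give $|m_N(z)-m(z)|\le c/(N\eta)+c\,k_N(z)/N^2\le C/(N\eta)$ for $\eta\ge N^{-1/2+\epsilon}$, since $1/(N^2\eta^3)\le 1/(N\eta)$ whenever $N\eta^2\ge 1$.

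Next I would apply Lemma~\ref{lem:HS} to $\tilde\rho$ at scale $\eta_0=N^{-1/2+\epsilon}$. Hypothesis \eqref{eqn:cond1} on $\{\eta_0<y<1\}$ would hold with $U=O(1)$ by the previous step, and \eqref{eqn:cond2} is immediate from \eqref{eqn:largDev1} for any sufficiently large $\mathcal T$. For $0<y<\eta_0$, I would exploit that both $y\mapsto y\,\Im\,m_N(x+\ii y)$ and $y\mapsto y\,\Im\,m(x+\ii y)$ are monotone increasing (because $\partial_y[y^2/((x-\la)^2+y^2)]\ge 0$), which gives
\begin{equation*}
|\Im(m_N-m)(x+\ii y)|\le\Im\,m_N(x+\ii y)+\Im\,m(x+\ii y)\le\frac{\eta_0}{y}\bigl(\Im\,m_N(x+\ii\eta_0)+\Im\,m(x+\ii\eta_0)\bigr)\le\frac{C\eta_0}{y},
\end{equation*}
so \eqref{eqn:cond1} would hold on $\{0<y<\eta_0\}$ with $U=CN\eta_0=CN^{1/2+\epsilon}$. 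Lemma~\ref{lem:HS} then delivers $|\int f_E\,\tilde\rho\,\rd\la|\le CU|\log\eta_0|/N\le CN^{-1/2+\epsilon}\log N$, uniformly in $E\in[A+\delta,B-\delta]$.

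To conclude, I would convert this smoothed-CDF estimate into the quantile bound. Using $\Im\,m_N(E+\ii\eta_0)\le C$ to control $\int_{E-\eta_0}^{E+\eta_0}\rho_1^{(N)}\le C\eta_0$ (by lower-bounding the Poisson kernel on that window), the integrals $\int f_E\,\rho_1^{(N)}$ and $\int f_E\,\rho$ approximate the distribution functions $F_N(E)=\int_{-\infty}^E\rho_1^{(N)}$ and $F(E)=\int_{-\infty}^E\rho$ to within $O(\eta_0)$, yielding $|F_N(E)-F(E)|\le\epsilon_N:=CN^{-1/2+\epsilon}\log N$ uniformly. Since $F(\gamma_k\pm h)=k/N\pm\rho(\gamma_k)h+O(h^2)$ with $\rho(\gamma_k)\ge c>0$ in the bulk, taking $h=2\epsilon_N/c$ gives $F_N(\gamma_k+h)>k/N>F_N(\gamma_k-h)$, whence monotonicity of $F_N$ forces $|\gamma_k^{(N)}-\gamma_k|\le h=O(N^{-1/2+\epsilon}\log N)$, which is stronger than the claim. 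The step I expect to be most delicate is the small-$y$ Stieltjes bound in the middle paragraph: the naive bound $|\Im(m_N-m)|\le 2\max(\Im\,m_N,\Im\,m)$ misses the crucial factor $\eta_0/y$ and would force $U$ of order $N$, losing the needed $N^{-1/2}$ accuracy; it is precisely the monotonicity trick that makes $U=N^{1/2+\epsilon}$ available.
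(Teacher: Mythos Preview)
Your proposal is correct and follows essentially the same route as the paper: bound $k_N$ via the spectral gap/LSI, feed into Lemma~\ref{lem:Johansson} to control $|m_N-m|$ for $\eta\ge N^{-1/2+\epsilon}$, extend to $0<y<\eta_0$ by the monotonicity of $y\mapsto y\,\Im m_N(x+\ii y)$, apply Lemma~\ref{lem:HS} with $U=O(N^{1/2+\epsilon})$, and convert to a quantile bound. The only differences are cosmetic: the paper uses the cruder $k_N\le C/\eta^4$ directly (your bootstrap to $k_N\le C/\eta^3$ via boundedness of $\Im m_N$ is valid but unnecessary, since the small-$y$ range forces $U=N^{1/2+\epsilon}$ regardless), and the paper passes from the smoothed to the sharp CDF by sandwiching with $f_{E\pm\eta}$ and using only that $\rho$ is bounded, whereas you use the Poisson-kernel bound $\int_{E-\eta_0}^{E+\eta_0}\rho_1^{(N)}\le C\eta_0\,\Im m_N(E+\ii\eta_0)$, which works equally well once $\Im m_N$ is known to be bounded.
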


\begin{proof}
We will apply Lemma \ref{lem:HS} to $\wt\rho=\rho-\rho_1^{(N)}$ with  $\eta=N^{-1/2+\epsilon}$,
and check the conditions on $S=m-m_N$.
We denote $z=x+\ii y$.

By the spectral gap inequality for the measure $\mu$, we get
\begin{equation}\label{eqn:vg}
\frac{1}{N^2}\left|\var_\mu\left(\sum_{k=1}^N\frac{1}{z-\lambda_k}\right)\right|
\leq
\frac{c}{N^3}\E_\mu\left(\left|\nabla\left(\sum_{k=1}^N
\frac{1}{z-\lambda_k}\right)\right|^2\right)\leq \frac{c}{N^2 y^4}.
\end{equation}
Together with Lemma \ref{lem:Johansson}, this implies that uniformly in
$N^{-1/2+\epsilon}\leq y\leq 1$ and $x\in[A+\delta/2,B-\delta/2]$, we have
$$
|m_N(z)-m(z)|=\OO\left(\frac{1}{N}+\frac{1}{N^2 y^4}\right)=\OO\left(\frac{\sqrt{N}}{N y}\right).
$$
For $0< y<N^{-1/2+\epsilon}$, $m$ is uniformly bounded and
$$
y\mapsto y\,\Im m_N(x+\ii y)=\int\frac{y^2}{(x-t)^2+y^2}\rho^{(N)}(t)\rd t
$$
is an increasing function, so denoting $y_0=N^{-1/2+\epsilon}$ we have
$$
y\,|\Im(m_N(x+\ii y)-m(x+\ii y))|
\leq
y_0\,\Im m_N(x+\ii y_0)+\OO(y)
\leq y_0|\Im(m_N(x+\ii y_0)-m(x+\ii y_0))|+\OO(y_0)
.$$
Therefore, for any $0<y<N^{-1/2+\epsilon}$,
$$
|\Im(m_N(x+\ii y)-m(x+\ii y))|=\OO\left(\frac{N^{1/2+\epsilon}}{Ny}\right).
$$
 Finally, the  condition (\ref{eqn:cond2}) with $U=\OO(N^{1/2+\epsilon})$
 and with the choice of any  $\mathcal{T} \ge \max (|A|, |B|)+\delta$ follows from the large deviation
estimate \eqref{eqn:largDev1}.

Using  the conclusion of Lemma \ref{lem:HS}
for functions $f_E$ and $f_{E+\eta}$ defined in the same lemma,
and subtracting both results,
we get that uniformly in  $E\in[A+\delta,B-\delta]$,
\begin{equation}\label{eqn:convRho}
\left|\int_{-\infty}^E(\rho^{(N)}(t)-\rho(t))\rd t\right|=\OO(N^{-1/2+\epsilon}),
\end{equation}
so if $\gamma_k^{(N)}\in[A+\delta,B-\delta]$, then
$|\gamma^{(N)}_k-\gamma_k|=\OO\left(N^{-1/2+\epsilon}\right)$. This estimate holds
uniformly in $k\in\llbracket \alpha N,(1-\alpha)N\rrbracket$:
as a consequence of (\ref{eqn:convRho}) and the smooth form (\ref{eqn:rho}) of $\rho$, for any $k\in\llbracket \alpha N,(1-\alpha)N\rrbracket$ and sufficiently large $N$ we have $\gamma_k^{(N)}\in[A+\delta,B-\delta]$, for $\delta>0$ small enough, concluding the proof.
\end{proof}

\begin{lemma}\label{edge}
For any  $\epsilon>0$
there exists $c_1, c_2, \eps'$ positive
constants such that for any
$ N^{3/5+ \e}\le j \le N - N^{3/5+ \e} $, we have
$$
\P_\mu \left ( |\lambda_j-\gamma_j|\ge  N^{-4/15+\epsilon}\right)  \le c_1 e^{ - c_2  N^{\e'}}  \; .
$$
\end{lemma}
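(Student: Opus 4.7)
The plan is to run the loop-equation plus Helffer--Sjöstrand scheme already used for Proposition~\ref{prop:InitialEstimate}, but now allowing the spectral parameter $E$ to approach the endpoints of $[A,B]$ to within a distance $\kappa=N^{-4/15}$. The choice of this scale is dictated by the geometry of the classical locations: because the equilibrium density vanishes like a square root at the edge (see (\ref{eqn:rho})), we have $\int_A^{A+\kappa}\rho(s)\,\rd s\sim\kappa^{3/2}$, so $\gamma_k$ first enters the window $[A+\kappa,B-\kappa]$ precisely at $k\sim\kappa^{3/2}N=N^{3/5}$, matching the index range stated in the lemma. Accuracy better than $\kappa$ is not to be expected from this scheme at such distances, because the local density is only of order $\sqrt{\kappa}$ and the Stieltjes transform has resolution limited by the distance to the support.

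The first task is to re-prove Lemma~\ref{lem:Johansson} for $E\in[A+\kappa,B-\kappa]$ while tracking the $\kappa$-dependence. Near the edge the analytic factor $s(z)=-2r(z)\sqrt{(A-z)(B-z)}$ has $|s(z)|\asymp\sqrt{\kappa+|\eta|}$, so dividing (\ref{eqn:ReFirstLoop}) by $s$ incurs an extra factor $(\kappa+|\eta|)^{-1/2}$; consequently the contours $\mathcal{L}_1,\mathcal{L}_2$ of (\ref{eqn:contour}) must be chosen on a $\kappa$-dependent scale so that $r$ stays bounded away from zero along them. The variance $k_N$ is still controlled by (\ref{eqn:vg}) via the Bakry--Émery LSI associated to the convexity (\ref{eqn:Hconv}), giving $k_N/N^2=O(1/(N^2\eta^4))$. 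A continuity argument analogous to the one eliminating the large solution of the quadratic (\ref{eqn:quadratic}) then yields
$$|m_N(z)-m(z)|\le\frac{C}{\sqrt{\kappa+\eta}}\cdot\frac{1}{N\eta}\qquad\text{at}\quad\eta=N^{-4/15+\epsilon}.$$
Applying Lemma~\ref{lem:HS} to $\wt\rho=\rho-\rho_1^{(N)}$ at this $\eta$, with (\ref{eqn:cond2}) supplied by the large-deviation bound (\ref{eqn:largDev1}) and condition (\ref{eqn:cond1}) by the above, and then converting the resulting estimate on $\int_{-\infty}^E(\rho_1^{(N)}-\rho)\,\rd t$ into quantile information via the square-root profile of $\rho$, produces $|\gamma^{(N)}_k-\gamma_k|\le C N^{-4/15+\epsilon}$ uniformly for $k\in\llbracket N^{3/5+\epsilon},N-N^{3/5+\epsilon}\rrbracket$.

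To pass from the deterministic quantile $\gamma^{(N)}_k\approx\E_\mu\lambda_k$ to the random $\lambda_k$ itself, I would invoke the concentration inequality (\ref{eqn:initialconcentration}) with $x=N^{-4/15+\epsilon}$, yielding probability at most $2\exp(-\tilde c N^{7/15+2\epsilon})$ for a single index, and then take a union bound over the $O(N)$ relevant $k$, which preserves a stretched-exponential tail with some $\delta>0$. The main obstacle will be the first step: making the contour integration of Lemma~\ref{lem:Johansson} quantitatively robust as the edge is approached to within $\kappa=N^{-4/15}$. One must choose $\mathcal{L}_1$ and $\mathcal{L}_2$ on a $\kappa$-dependent scale, bound $1/r(\xi)$ and $1/(z-\xi)$ along those shrinking contours without losing uncontrolled powers of $N$, and verify that the continuity/bootstrap eliminating the spurious large root of (\ref{eqn:quadratic}) remains valid uniformly down to the edge scale; a secondary but easier check is that the truncation to $\mu^{(N,R)}$ used to justify the contour shifts still yields the exponentially small corrections in (\ref{eqn:expDiff}).
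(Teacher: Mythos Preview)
Your strategy---loop equation near the edge, Helffer--Sj\"ostrand via Lemma~\ref{lem:HS}, quantile conversion through the square-root profile of $\rho$, then concentration via (\ref{eqn:initialconcentration})---is exactly the paper's. Two points need correcting.

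First, the contour deformation to a $\kappa$-dependent scale is a red herring. Equation (\ref{eqn:ReFirstLoop}) was already established for \emph{all} $z$ inside the fixed contour $\mathcal{L}_1$ with error $O(1/N)$; nothing in its derivation required $E$ to lie in the bulk. The edge enters only when you divide (\ref{eqn:ReFirstLoop}) by $s(z)=-2r(z)f(z)$: for $E$ near an endpoint one simply uses the crude lower bound $|f(z)|\ge c\sqrt{\eta}$ (since $|A-z|\ge\eta$), while $r$ is analytic with no real zeros for convex $V$ and is therefore bounded below on the fixed contours regardless. So there is no need to shrink anything.

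Second, and this is a genuine gap: your choice $\eta=N^{-4/15+\epsilon}$ is too large. Condition (\ref{eqn:cond1}) in Lemma~\ref{lem:HS} also demands $|\Im\,S(x+iy)|\le U/(Ny)$ for $0<y<\eta$, and the only available tool there is the monotonicity of $y\mapsto y\,\Im\,m_N(x+iy)$ used in Corollary~\ref{cor:InitialAccuracy}, which yields $|\Im\,S|\le C\eta/y$ and hence forces $U\ge CN\eta$. With your $\eta$ this gives $U\sim N^{11/15}$; the resulting index shift in the quantile comparison exceeds the smallest index $j\sim N^{3/5}$ in the stated range, and the inequality $\gamma_{j-CU}\le\gamma_j^{(N)}+2\eta$ becomes vacuous. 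The paper instead takes $\eta=N^{-2/5}$, the smallest scale at which the edge form of the loop equation closes (the binding constraint is $N^2\eta^5\gtrsim 1$), giving $U=N^{3/5}$. The accuracy $N^{-4/15}$ that you correctly anticipated appears only at the very last step: the index shift $N^{3/5}$ converts to a spatial shift $|\gamma_{j\pm N^{3/5}}-\gamma_j|\le C(j/N)^{-1/3}N^{-2/5}\le N^{-4/15}$ for $j\ge N^{3/5+\epsilon}$, and the smoothing error $2\eta=N^{-2/5}$ is smaller still.
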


\begin{proof}
We
will assume that $j< N/2$ in the following, i.e. we will estimate the accuracy near the edge $A$,
the proof close to the other edge $B$ being analogous.
We follow the notations used in Corollary \ref{cor:InitialAccuracy} and Lemma \ref{lem:HS}.
For $E \in [A-\delta, A+\delta] \cup [B-\delta, B + \delta]$,   we have $\inf |f|(z) \ge \sqrt \eta $.
Therefore we can divide $-2 f(z)$ on both side of
(\ref{eqn:ReFirstLoop}) to have
\begin{equation}
\left(1+\OO\left(\frac{1}{N\eta ^{3/2}}\right)\right)(m^{(R)}_N(z)-m(z))=
\OO\left(   \frac {|m^{(R)}_N(z)-m(z)|^2} {\sqrt \eta}+
\frac{1}{N^2  \sqrt \eta}k^{(R)}_N(z)+\frac{1}{N\eta^{3/2}}\right).
\end{equation}
 By (\ref{eqn:vg}), (\ref{eqn:BPS1}) and (\ref{eqn:BPS2}) we can bound the variance term by
\be
\frac{1}{N^2  }k^{(R)}_N(z) \le \frac { c} { N^2 \eta^4}.
\ee
for $\eta\geq N^{-10}$ for example.
Following the same continuity argument in the proof of Lemma \ref{lem:Johansson}, we obtain that for any $\epsilon>0$
$$
|\Im(m_N-m)(x+\ii \eta)|=\OO\left(\frac{N^{\epsilon}}{N^2 \eta^{9/2} }\right),
$$
provided that
\be
 \frac {1} {\sqrt \eta}\left [ \frac{1}{N^2  \sqrt \eta}k^{(R)}_N(z)+\frac{1}{N\eta^{3/2}}  \right ] \le
 \frac { c} { N^2 \eta^5 } \ll 1.
\ee
We can now follow the argument in the proof of Corollary \ref{cor:InitialAccuracy} so that
\eqref{eqn:cond1} holds with $U = N^{3/5}$.  Since the condition \eqref{eqn:cond2} is easy to verify,
we thus  have
$$
\left|\int f_E(t ) \big [ \rho^{(N)}(t)-\rho(t)  \big ] \rd t  \right|  \le
\frac{C |\log\eta| }{N^{2/5}}, \quad \eta = N^{-2/5},
$$\
where $f_E$ is defined in Lemma \ref{lem:HS}.   This proves that
$$
\int^{E - \eta}_{-\infty}   \rho^{(N)}(t )   \rd t   \le   \int^{E + \eta}_{-\infty}   \rho(t)  \rd t +
\frac{C |\log\eta| }{N^{2/5}}.
$$
In particular,
$$
\frac j N -
\frac{C |\log\eta| }{N^{2/5}}
 = \int^{\gamma_j^{(N)} }_{-\infty}   \rho^{(N)}(t )   \rd t    -
\frac{C |\log\eta| }{N^{2/5}}   \le   \int^{\gamma_j^{(N)}  + 2 \eta}_{-\infty}   \rho(t)  \rd t,
$$
and we have, by definition of $\gamma_i$, that
$$
\gamma_{j - N^{3/5 + \e}} \le \gamma_j^{(N)}  + 2 \eta.
$$
Similarly, the reverse inequality holds and we have
$$
\gamma_{j - N^{3/5 + \e}}- 2 \eta  \le \gamma_j^{(N)}  \le \gamma_{j + N^{3/5 + \e}} + 2 \eta.
$$
Since $\int_A^E \rho(t) \rd t \sim (E-A)^{3/2}$,
for $j \ge N^{3/5 + \e}$ we have
$$
|\gamma_{j - N^{3/5 + \e}} - \gamma_j |\le C \left ( \frac j N \right )^{-1/3}
 N^{-2/5 + \e/2} \le N^{-4/15 + \e}.
$$
Moreover, by (\ref{eqn:initialconcentration}),
$\lambda_j$ is concentrated around $\E_\mu(\lambda_j)$
at scale $N^{-1/2}$, so
$|\E_\mu(\lambda_j)-\gamma^{(N)}_j|=\OO(N^{-1/2})$,
concluding the proof of the lemma.
\end{proof}

\subsection{The locally constrained measures}

In this section some arbitrary $\epsilon,\alpha>0$ are fixed.
Let $\theta$ be a continuous nonnegative function with $\theta=0$ on $[-1,1]$ and $\theta''\geq 1$ for $|x|>1$.
We can take for example $\theta(x)=(x-1)^2 \mathds{1}_{x>1}+(x+1)^2 \mathds{1}_{x<-1}$ in the following.

\begin{definition} \label{def:locallyConstrained}
For a given $k\in\llbracket\alpha N,(1-\alpha) N\rrbracket$ and any integer $1\leq  M\leq \alpha N$, we denote $I^{(k,M)}=\llbracket k-M,k+M\rrbracket$ and $i_M=|I^{(k,M)}|=2M+1$.
Moreover, let
$$
\phi^{(k,M)}=
\beta\sum_{i<j,i,j\in I^{(k,M)}}\theta\left(\frac{N^{1-\epsilon}(\lambda_i-\lambda_j)}
{i_M}\right).
$$
We define the probability measure
\begin{equation}\label{eqn:omega}
\rd\omega^{(k,M)}:=\frac{1}{Z}e^{-\phi^{(k,M)}}\rd\mu,
\end{equation}
where $Z=Z_{\omega^{(k,M)}}$.
The measure
$\omega^{(k,M)}$ will be referred to as locally constrained
transform of $\mu$, around $k$, with width $M$.
The dependence of
the measure on $\epsilon$ will be suppressed in the notation.
\end{definition}

We will also frequently use the following notation
for block averages in any sequence $x_1, x_2, \ldots $
\begin{equation}\label{eqn:average}
x_k^{[M]}:=\frac{1}{i_M}\sum_{i\in I^{(k,M)}}x_i.
\end{equation}

The reason for introducing these locally constrained measures is that they improve the convexity  in $I^{(k,M)}$
up to a common shift, as explained in the following lemma.

\begin{lemma}\label{lem:localConvexity}
Consider the previously defined
probability measure
$$\omega^{(k,M)}=\frac{1}{Z}e^{-\phi^{(k,M)}}\rd\mu=\frac{1}{\tilde Z}
e^{-N(\mathcal{H}_1+\mathcal{H}_2)}\rd\lambda,$$
where we denote
\begin{align*}
\mathcal{H}_1&:=\frac{1}{N}\phi^{(k,M)}-
\frac{\beta}{N}\sum_{i< j, i,j\in I^{(k,M)}}\log|\lambda_i-\lambda_j|
,\\
\mathcal{H}_2&:=-\frac{\beta}{N}\sum_{ (i, j)  \in J^{(k,M)}}\log|\lambda_i-\lambda_j|+\frac{\beta}{2}\sum_{i=1}^NV(\lambda_i)
\end{align*}
where $J^{(k,M)}$ is the set of pairs of points $i<j$ in $\llbracket 1,N \rrbracket$ such that $i$ or $j$ is not in $I^{(k,M)}$,
and $\mathcal{H}_1=\mathcal{H}_1(\lambda_{k-M},\dots,\lambda_{k+M})$.
Then $\nabla^2 \mathcal{H}_2\geq 0$
and  denoting $v=(v_i)_{i\in I^{(k,M)}}$, we also have
\begin{equation}\label{eqn:convexity}
v^*(\nabla^2 \mathcal{H}_1)v\geq
\frac{\beta}{2}\frac{N^{1-2\epsilon}}{i_M} \sum_{i,j\in I^{(k,M)}} (v_i-v_j)^2.
\end{equation}
\end{lemma}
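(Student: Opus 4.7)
The plan is to compute $v^*\nabla^2(\mathcal{H}_1+\mathcal{H}_2)v$ directly, pair by pair, and observe that for every pair $(i,j)\subset I^{(k,M)}$ either the logarithmic repulsion or the penalty $\theta$ is strong enough to force uniform convexity at the scale $c^2=N^{2-2\epsilon}/i_M^2$, where $c:=N^{1-\epsilon}/i_M$.

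\textbf{Convexity of $\mathcal{H}_2$.} Each summand of $\mathcal{H}_2$ is manifestly convex. The potential part $\frac{\beta}{2}\sum_{i=1}^N V(\lambda_i)$ is convex since $V''\ge 0$. Each pair interaction $-\frac{\beta}{N}\log|\lambda_j-\lambda_i|$ contributes $\frac{\beta}{N(\lambda_j-\lambda_i)^2}(v_i-v_j)^2\ge 0$ to the Hessian quadratic form (in its natural 2D subspace). Summing over $(i,j)\in J^{(k,M)}$ and over $i$ in the potential term gives $\nabla^2\mathcal{H}_2\ge 0$.

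\textbf{Hessian of $\mathcal{H}_1$ and the key dichotomy.} A direct pairwise computation yields
$$
v^*(\nabla^2\mathcal{H}_1)v = \frac{\beta}{N}\sum_{\substack{i<j\\ i,j\in I^{(k,M)}}}\left[c^2\,\theta''\bigl(c(\lambda_j-\lambda_i)\bigr) + \frac{1}{(\lambda_j-\lambda_i)^2}\right](v_i-v_j)^2.
$$
The crux is to bound the bracket below by $c^2$ for every configuration, via the following dichotomy. If $|c(\lambda_j-\lambda_i)|\le 1$, then $(\lambda_j-\lambda_i)^{-2}\ge c^2$, and the log-repulsion term alone suffices. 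Otherwise $|c(\lambda_j-\lambda_i)|>1$, and by the hypothesis $\theta''\ge 1$ on $\{|x|>1\}$ the penalty term is at least $c^2$. Substituting this uniform lower bound and extending $\sum_{i<j}$ to $\tfrac12\sum_{i,j}$ yields the inequality (\ref{eqn:convexity}).

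The design principle behind $\theta$ is exactly this dichotomy: at the crossover scale $1/c=i_M N^{-1+\epsilon}$, one of the two convexity sources --- log-repulsion when two particles are close, the penalty $\theta$ when they are far apart --- is always active. That is the only nontrivial point in the proof; the rest is a routine second-derivative computation, so I do not expect any serious obstacle.
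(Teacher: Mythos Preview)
Your proof is correct and follows essentially the same route as the paper: compute the Hessian of $\mathcal{H}_2$ pairwise to see nonnegativity, compute the Hessian of $\mathcal{H}_1$ pairwise, and then use the dichotomy that for each pair either $|\lambda_i-\lambda_j|\le 1/c$ (so the log-repulsion term dominates) or $|\lambda_i-\lambda_j|>1/c$ (so $\theta''\ge 1$ and the penalty term dominates), giving the uniform lower bound $c^2=N^{2-2\epsilon}/i_M^2$ on the bracket. The only cosmetic difference is that the paper writes the combined pointwise inequality $\frac{1}{(\lambda_i-\lambda_j)^2}+c^2\theta''(c(\lambda_i-\lambda_j))\ge c^2$ in one line rather than spelling out the two cases; the content is identical.
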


\begin{proof}
Since $V$ is convex, 
to prove the convexity of $\mathcal{H}_2$, it suffices to prove it for the Coulomb interaction terms;
this relies on the calculation, for any $u\in\RR^N$,
$$
u^*(\nabla^2\mathcal{H}_2(\lambda))u=\frac{\beta}{N}\sum_{J^{(k,M)}}\frac{(u_i-u_j)^2}{(\lambda_i-\lambda_j)^2}\geq 0.
$$
Moreover, for any $v\in\RR^{i_M}$, a similar calculation yields
\begin{equation}\label{eqn:calculationConvexity}
v^*(\nabla^2 \mathcal{H}_1)v\geq\frac{\beta}{N} \sum_{i< j, i,j\in I^{(k,M)}}\frac{(v_i-v_j)^2}{(\lambda_i-\lambda_j)^2}
+
\beta\frac{N^{1-2\epsilon}}{i_M^2}\sum_{i< j, i,j\in I^{(k,M)}}(v_i-v_j)^2
\theta''\left(\frac{N^{1-\epsilon}(\lambda_i-\lambda_j)}{i_M}\right).
\end{equation}
{F}rom our definition of $\theta$,
$$
\frac{1}{(\lambda_i-\lambda_j)^2}+
\frac{N^{2-2\epsilon}}{i_M^2}\theta''\left(\frac{N^{1-\epsilon}(\lambda_i-\lambda_j)}{i_M}\right)
\geq
\frac{N^{2-2\epsilon}}{i_M^2},
$$
which implies
\begin{equation}\label{iInfj}
v^*(\nabla^2 \mathcal{H}_1)v\geq \beta \frac{N^{1-2\epsilon}}{i_M^2}\sum_{i< j, i,j\in I^{(k,M)}}(v_i-v_j)^2.
\end{equation}
which completes the proof of (\ref{eqn:convexity}),
noting that the above factor $1/2$ comes from the strict ordering of $i$ and $j$ indexes in (\ref{iInfj}).
\end{proof}

The above convexity bound, associated with the following local criterion for the logarithmic Sobolev inequality,
will yield a strong concentration for $\sum_{i\in I^{(k,M)}} v_i\lambda_i$ under $\omega^{(k,M)}$, if $\sum_i v_i=0$.
This lemma is a consequence of the Brascamp-Lieb
inequality \cite{BraLie}.  Notice that the original inequality applied only to measures on $\RR^N$,
but a mollifying argument in Lemma 4.4 of \cite{EKYY2}
has extended it to the measures on the simplex $\{ \lambda_1 < \lambda_2 < \ldots < \lambda_N\}$  considered in this paper.

\begin{lemma}\label{lem:localLSI} Decompose  the coordinates $\lambda=(\lambda_1, \ldots ,\lambda_N)$
of a point in  $\RR^N = \RR^m \times \RR^{N-m}$ as $\lambda=(x,y)$, where $x\in \RR^m$, $y\in \RR^{N-m}$.
Let $\omega=\frac{1}{Z}e^{-N \mathcal{H}}$ be a probability measure on $\RR^N = \RR^m \times \RR^{N-m}$ such that
$\mathcal{H}=\mathcal{H}_1+\mathcal{H}_2$, with $\mathcal{H}_1=\mathcal{H}_1(x)$ depending only on 
the $x$ variables and $\mathcal{H}_2 =\mathcal{H}_2(x,y)$
depending on all coordinates. Assume that, for any $\lambda\in\RR^N$,
$\nabla^2\mathcal{H}_2(\lambda)\geq 0$. 
Assume moreover that $\mathcal{H}_1(x)$ is independent of
 $x_1 + \ldots + x_m$, i.e. $\sum_{i=1}^m \partial_i \mathcal{H}_1(x) =0$
 and  that  for any $x, v\in\RR^m$,
\be
v^* (\nabla^2\mathcal{H}_1(x))v\geq  \frac{\xi}{m}  \sum_{i, j =1}^m  |v_i-v_j|^2 \; 
\ee
with some positive $\xi>0$.
Then for any  function  of the form $f(\lambda)=F( \sum_{i=1}^mv_i x_i)$, where
$\sum_iv_i=0$  and $F:\RR\to\RR$ is any smooth function, 
 we have
\be\label{lsii}
\int f^2\log f^2\rd\omega-\left(\int f^2\rd\omega\right)\log\left(\int f^2\rd\omega\right)
 \leq \frac{1}{\xi  N}\int|\nabla f|^2\rd\omega.
\ee
\end{lemma}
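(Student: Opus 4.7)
The key observation is that the test function $f(\lambda)=F\bigl(\sum_{i=1}^m v_i x_i\bigr)$ depends on $\lambda\in\RR^N$ only through the single linear functional $u:=\hat v\cdot\lambda$, where $\hat v:=v/|v|\in\RR^m$ is embedded in $\RR^N$ by padding with zeros in the $y$-coordinates. First I would note that with $\tilde F(t):=F(|v|t)$ one has $f=\tilde F(u)$ and $|\nabla f|^2=\tilde F'(u)^2$ pointwise, so if $\nu$ denotes the pushforward of $\omega$ under $\lambda\mapsto u$, both sides of \eqref{lsii} become quantities depending only on $\nu$ and $\tilde F$: the entropy side equals $\int\tilde F^2\log\tilde F^2\,\rd\nu-(\int\tilde F^2\,\rd\nu)\log(\int\tilde F^2\,\rd\nu)$, and the Dirichlet integral equals $\int\tilde F'(u)^2\,\rd\nu$. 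Hence the lemma reduces to showing that $\nu$ satisfies a one-dimensional log-Sobolev inequality with constant $1/(\xi N)$, which by the classical Bakry--\'Emery theorem on $\RR$ follows from $\nu$ being $(2\xi N)$-strongly log-concave.

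To convert the directional convexity hypothesis into a usable matrix inequality, I use the identity $\sum_{i,j=1}^m(v_i-v_j)^2=2m|v|^2-2(\sum_i v_i)^2$ to restate the assumption as the operator inequality $\nabla^2\mathcal{H}_1(x)\geq 2\xi P_0$ on $\RR^m$, where $P_0:=I_m-\tfrac{1}{m}J$ is the orthogonal projection onto $\{w\in\RR^m:\sum_i w_i=0\}$. Since the unit vector $\hat v$ has vanishing coordinate sum, $\hat v\in\mathrm{Range}(P_0)$, hence $\hat v\hat v^{\top}\leq P_0$. Combined with $\nabla^2\mathcal{H}_2\geq 0$ and zero extension in the $y$-coordinates, this yields the key bound
\[
\nabla^2(N\mathcal{H})\;\geq\;2\xi N\,\hat v\hat v^{\top}\quad\text{on}\quad\RR^N.
\]

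The plan to derive $(2\xi N)$-log-concavity of $\nu$ from this is the standard Brascamp--Lieb projection argument. Decomposing $\RR^N=\mathbb{R}\hat v\oplus\hat v^{\perp}$, I would write $\nabla^2(N\mathcal{H})$ in block form with scalar $(1,1)$-entry $a$, off-diagonal block $b\in\hat v^{\perp}$, and $(2,2)$-block $D$ on $\hat v^{\perp}$. The Schur complement of the positive semidefinite matrix $\nabla^2(N\mathcal{H})-2\xi N\hat v\hat v^{\top}\geq 0$ gives $a-b^{\top}D^{-1}b\geq 2\xi N$ pointwise (with $D^{-1}$ interpreted as a Moore--Penrose pseudo-inverse on its range if $D$ is degenerate). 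Writing $e^{-U(u)}$ for the density of $\nu$ and differentiating under the integral sign yields
\[
U''(u)=\mathbb{E}_{w|u}\bigl[\partial_u^2(N\mathcal{H})\bigr]-\var_{w|u}\bigl[\partial_u(N\mathcal{H})\bigr],
\]
where the conditional law of $w$ given $u$ is the log-concave measure on $\hat v^{\perp}$ proportional to $e^{-N\mathcal{H}(u\hat v+w)}$, whose log-density Hessian equals $D$. Applying the Brascamp--Lieb variance inequality to this conditional measure with test function $g(w):=\hat v\cdot N\nabla\mathcal{H}(u\hat v+w)$, whose $w$-gradient is exactly $b$, bounds $\var_{w|u}[\partial_u(N\mathcal{H})]\leq\mathbb{E}_{w|u}[b^{\top}D^{-1}b]$; consequently $U''(u)\geq\mathbb{E}_{w|u}[a-b^{\top}D^{-1}b]\geq 2\xi N$. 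The standard one-dimensional LSI for $(2\xi N)$-strongly log-concave measures applied to $\tilde F$ on $\nu$ then gives \eqref{lsii}.

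The main technical obstacle I anticipate is the degeneracy of the bound $\nabla^2(N\mathcal{H})\geq 2\xi N\hat v\hat v^{\top}$: the lower bound is non-trivial only along the one-dimensional subspace spanned by $\hat v$, so the Brascamp--Lieb inequality and the Schur complement above require either justification via the Moore--Penrose pseudo-inverse of $D$ on its range, or a routine regularization argument replacing $N\mathcal{H}$ by $N\mathcal{H}+\epsilon|\lambda|^2$ and sending $\epsilon\to 0$. When the lemma is applied to probability measures supported on the simplex $\{\lambda_1<\dots<\lambda_N\}$, one additionally invokes the mollifying extension of Brascamp--Lieb from Lemma~4.4 of \cite{EKYY2} mentioned in the statement.
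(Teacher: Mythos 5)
Your proof is correct, and it follows the same overall strategy as the paper (reduce the log-Sobolev inequality to a marginal, establish log-concavity of that marginal via Brascamp--Lieb, then apply Bakry--\'Emery), but the reduction is organized differently. The paper changes coordinates on $\RR^m$ via an orthogonal matrix $M$ so that one coordinate $w=m^{-1/2}\sum_i x_i$ is the center of mass and $z\in\RR^{m-1}$ carries the orthogonal complement; it then proves an $(m-1)$-dimensional LSI for the $z$-marginal $\nu(z)\sim\int e^{-N\cH}\,\rd w\,\rd y$ by showing $\wt{\cH}_{zz}\ge 2\xi\,\Id_{m-1}$ via Brascamp--Lieb applied to the joint $(z;w,y)$-Hessian. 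You instead observe that the specific test function $f=F(\sum_i v_i x_i)$ is measurable with respect to the single linear functional $u=\hat v\cdot\lambda$, so it suffices to prove a \emph{one-dimensional} LSI for the pushforward $\nu=u_*\omega$; you then derive $U''\ge 2\xi N$ from the conditional-variance identity and a Brascamp--Lieb variance bound along $\hat v^\perp$, which is the Schur-complement formulation of the same inequality the paper uses. Your reduction is a bit leaner because it exploits the one-dimensional structure of the allowed test functions; the paper's version is marginally stronger, giving an LSI for arbitrary smooth functions of all the centered coordinates $z$, not only of a single direction. The translation of the hypothesis to $\nabla^2\cH_1\ge 2\xi P_0$ and the inequality $\hat v\hat v^{\top}\le P_0$ are correct, the conditional-variance formula for $U''$ is standard and checks out, and the Bakry--\'Emery constant $2/(2\xi N)=1/(\xi N)$ matches. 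Both arguments share the technical caveat you flag about degeneracy of the Hessian block (paper: $\cH_{qq}$; yours: $D$), and both must be extended to the simplex via the mollification of Lemma~4.4 in \cite{EKYY2}, which the paper notes just before the lemma statement.
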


\begin{proof} In the space $\RR^m$ we 
introduce new  coordinates
 $ (z, w) = M^* (x_1, \ldots, x_m)$ with 
$z= (z_1, \ldots, z_{m-1}) \in \RR^{m-1}$ , 
\be
w:=  m^{-1/2} \sum_i x_i, 
\ee
and $M$ is an orthogonal matrix. 
Since $\mathcal{H}_1(x)$ is independent of $x_1 + \ldots + x_m$, 
we can define  $\wh {\mathcal H}_1 (z) : = {\mathcal H}_1(x)$. 
Similarly, the function $f(\lambda)=F( \sum_{i=1}^mv_i x_i)$ with $\sum_iv_i=0$
depends only on the $z$ coordinates, i.e. it can be written as
$g (z)= f(\lambda)$. 
Hence we can rewrite
$$
\int_{\RR^N} f^2\log f^2\rd\omega = \int_{\RR^{m-1}} g^2\log g^2\rd  \nu,\qquad
\int_{\RR^{N}} f^2\rd\omega= \int_{\RR^{m-1}} g^2\rd\nu,
$$
where $\rd  \nu  = \nu ( z ) \rd z  $ with
$$
 \nu ( z ) := 
\frac{1}{\tilde Z}e^{-N\tilde { \mathcal{H}}(z )}=\frac{ 1 } {Z} \int_{\RR\times \RR^{N-m}}
 e^{-N \mathcal{H}(x,y)}\rd w \rd y.
$$
Introduce the variable  $q = (w,y)\in \RR\times \RR^{N-m}$ and denote by 
 $ \mathcal{H}_{qq}, \mathcal{H}_{zq}, \mathcal{H}_{zz}$ the matrices of second partial derivatives. 
As $\mathcal{H}_2$ is convex, the Brascamp-Lieb inequality yields 
$$
\tilde{ \mathcal{H}}_{zz}    \geq   \frac{ 1 } {Z} \int_{\RR\times \RR^{N-m}}
 e^{-N \mathcal{H}(x,y)}  \Big [  \mathcal{H}_{zz}  -   \mathcal{H}_{zq}    [ \mathcal{H}_{qq}]^{-1} \mathcal{H}_{zq}  \Big ]   \rd w \rd y  .
$$
Since $\mathcal{H}_1$ is independent of $q$, we have, by assumption of the positivity
 of  the Hessian of ${\mathcal H}_2$, that for any $q$ fixed,   
\be
 ( \mathcal{H}_2)_{zz}  -   \mathcal{H}_{zq}    [ \mathcal{H}_{qq}]^{-1} \mathcal{H}_{zq}   =
  ( \mathcal{H}_2)_{zz}   -   (\mathcal{H}_2)_{zq}    [ (\mathcal{H}_2)_{qq}]^{-1} (\mathcal{H}_2)_{zq}    \ge 0.
\ee
Thus we have,  for any $u \in \RR^{m-1}$, that 
\be
u^* \tilde{ \mathcal{H}}_{zz}  u    \geq  u^*  (\hat {\mathcal {H}}_1)_{zz} u 
=   u^* \wt M^* (\mathcal{H}_1)_{xx} \wt M u \ge   \frac  \xi m  \sum_{i, j} [(\wt Mu)_i - (\wt M u )_j]^2 ,
\ee
where $\wt M$ denotes the first $m-1$ columns of $M$. Since the last column of $M$ is parallel with
$(1,1,\ldots, 1)\in \RR^m$ and $M$ is an orthogonal matrix, we have
 $ \sum_i (\wt Mu)_i = 0$ and 
\be
 \frac  \xi m  \sum_{i, j=1}^m [(\wt Mu)_i - (\wt M u )_j]^2   =  2\xi    \sum_{i=1}^m [(\wt Mu)_i ]^2  
= 2 \xi \sum_{i=1}^{m-1} u_i ^2.
\ee
Hence the measure $\nu\sim \exp(-N {\mathcal {\wt H}})$ 
is log-concave with a lower bound $2N\xi$ on the Hessian of $N {\mathcal {\wt H}}$,
 and we can apply the 
Bakry-Emery argument to prove the logarithmic Sobolev inequality for $\nu$.
 Without loss of generality we can assume
that $\int f^2\rd\omega=\int g^2\rd\nu= 1$.
 Therefore,  we have 
\be
\int_{\RR^N} f^2\log f^2\rd\omega =   \int_{\RR^{m-1}} g^2\log g^2\rd  \nu 
 \le \frac{1}{N \xi} \int_{\RR^{m-1}}
 | \nabla_z g |^2 \rd  \nu  =   \frac{1}{N \xi} \int_{\RR^N} | \nabla_x f |^2 \rd  \om,
\ee
where we have used the orthogonality of $M$ to show that  $| \nabla_z g |^2=| \nabla_x f |^2$.
This proves  the  estimate \eqref{lsii}.
\end{proof}

\bigskip 

It is now immediate, from Lemma \ref{lem:localConvexity},  Lemma \ref{lem:localLSI}
and Herbst's lemma, that the following concentration holds.

\begin{corollary}\label{cor:ConcentrationDifferences}
For any function $f(\{\lambda_i,i\in I^{(k,M)}\})=\sum_{I^{(k,M)}}v_i\lambda_i$ with $\sum_i v_i=0$ we have
$$
\P_{\omega^{(k,M)}}(|f-\E_{\omega^{(k,M)}}(f)|>x)
\leq 2\exp\left(-\frac{\beta}{4}\frac{ N^{2-2\epsilon}}{i_M |v|^2}x^2\right).
$$
\end{corollary}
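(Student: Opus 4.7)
The plan is to compose the three ingredients already assembled in this subsection. First, Lemma \ref{lem:localConvexity} provides the splitting $\omega^{(k,M)} \propto e^{-N(\mathcal{H}_1 + \mathcal{H}_2)}\rd\lambda$ with $\nabla^2\mathcal{H}_2 \ge 0$ globally and with $\mathcal{H}_1$ depending only on the $i_M$ coordinates $(\lambda_i)_{i \in I^{(k,M)}}$, obeying the quantitative convexity-in-differences estimate \eqref{eqn:convexity}. Crucially, both the Coulomb piece $-\frac{\beta}{N}\sum \log|\lambda_i - \lambda_j|$ and the $\theta$-piece that make up $\mathcal{H}_1$ are functions of the gaps $\lambda_i - \lambda_j$ with $i,j \in I^{(k,M)}$ only, so $\mathcal{H}_1$ is invariant under the common shift $\lambda_i \mapsto \lambda_i + c$ on the block. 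This translates into $\sum_{i \in I^{(k,M)}} \partial_i \mathcal{H}_1 \equiv 0$, precisely the translation invariance hypothesis of Lemma \ref{lem:localLSI}.

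Second, I would read \eqref{eqn:convexity} as the hypothesis of Lemma \ref{lem:localLSI} with $m = i_M$ and $\xi = \frac{\beta}{2}N^{1-2\epsilon}$, and apply that lemma to linear functionals of the form $f(\lambda) = F\bigl(\sum_{i \in I^{(k,M)}} v_i \lambda_i\bigr)$, which are admissible because of the assumption $\sum_i v_i = 0$. This yields the local logarithmic Sobolev inequality
\be
\int f^2 \log f^2 \,\rd\omega^{(k,M)} - \Bigl(\int f^2 \rd\omega^{(k,M)}\Bigr)\log\Bigl(\int f^2 \rd\omega^{(k,M)}\Bigr) \;\le\; \frac{2}{\beta N^{2-2\epsilon}}\int |\nabla f|^2 \rd\omega^{(k,M)}
\ee
restricted to such $f$.

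Third, the concentration follows from the standard Herbst argument applied to $g(\lambda) = \sum_{i \in I^{(k,M)}} v_i \lambda_i$, for which $|\nabla g|^2 = |v|^2$: insert $f = e^{t(g - \E g)/2}$ into the LSI above, derive the usual differential inequality for the log-moment generating function $\Lambda(t) = \log \E_{\omega^{(k,M)}} e^{t(g - \E g)}$, integrate it from $0$, and apply Markov. This gives a sub-Gaussian tail of rate $\frac{\beta N^{2-2\epsilon}}{2|v|^2}$, which is already stronger than the claimed $\frac{\beta N^{2-2\epsilon}}{4\, i_M |v|^2}$ (since $i_M \ge 1$, the factor $1/i_M$ is a harmless weakening).

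No substantive obstacle is anticipated; the bulk of the work has already been done in the preceding two lemmas. The only point worth flagging is that $\omega^{(k,M)}$ is supported on the ordered simplex $\{\lambda_1 < \dots < \lambda_N\}$ rather than on all of $\RR^N$, which is precisely why the statement of Lemma \ref{lem:localLSI} was set up to invoke the mollified form of Brascamp--Lieb cited immediately before it; once that is granted, the corollary is a clean three-line consequence.
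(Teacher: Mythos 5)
Your proposal is correct and is precisely the paper's intended route; the authors compress it into a single line invoking Lemma~\ref{lem:localConvexity}, Lemma~\ref{lem:localLSI}, and Herbst's lemma in exactly the composition you describe. Your observation that a literal reading of \eqref{eqn:convexity} yields the rate $\tfrac{\beta N^{2-2\epsilon}}{2|v|^2}$ without any $1/i_M$ is sharp and worth flagging: the step from \eqref{iInfj}, which has $i_M^2$ in the denominator, to the stated \eqref{eqn:convexity}, which has $i_M$, drops a factor of $i_M$; with the corrected $\xi = \tfrac{\beta}{2}N^{1-2\epsilon}/i_M$ the Herbst computation reproduces the claimed $1/i_M$ (up to a benign factor $2$ in the exponent), and on either reading the stated corollary follows.
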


Choosing $v_j = -v_{j+1} = 1$ and all other $v_i$'s being zero, this corollary shows that
the particle differences $\lambda_j-\lambda_{j+1}$ concentrate around their mean with respect to the
$\omega^{(k,M)}$ measure. By choosing $\epsilon$ small and $M$ almost
order one, we obtain concentration
almost up to the optimal scale $1/N$.
If we can justify that the measures $\omega^{(k,M)}$ and $\mu$ are very
close (in a sense to be defined), we will have concentration of differences at the optimal scale for
$\mu$. We will then separately show, by using the loop equation, that accuracy will hold
at the same scale as well. This is the purpose of the next subsection, through an inductive
argument.

\subsection{The induction}\label{subsec:induction}

The purpose of this paragraph is to prove the following proposition: if accuracy holds at scale $N^{-1+a}$, it holds also at scale $N^{-1+\frac{3}{4}a}$.

\begin{proposition}\label{prop:induction}
 Assume that for some $a\in(0,1)$ the following property holds:
for any $\alpha,\epsilon>0$, there are constants
$\delta,c_1,c_2>0$ such that for any $N\geq 1$ and $k\in\llbracket \alpha N,(1-\alpha) N\rrbracket$,
\begin{equation}\label{eqn:IndHyp}
\P_\mu\left(|\lambda_k-\gamma_k|> N^{-1+a+\epsilon}\right)\leq c_1e^{-c_2N^\delta}.
\end{equation}
Then the same property holds also replacing $a$ by $3a/4$:
for any $\alpha,\epsilon>0$, there are constants
$\delta,c_1,c_2>0$ such that for any $N\geq 1$ and $k\in\llbracket \alpha N,(1-\alpha) N\rrbracket$,
we have
$$
\P_\mu\left(|\lambda_k-\gamma_k|> N^{-1+\frac{3}{4}a+\epsilon}\right)\leq c_1e^{-c_2N^\delta}.
$$
\end{proposition}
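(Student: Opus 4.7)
The plan is to use the induction hypothesis to sharpen the variance estimate $k_N(z)$ appearing in the loop equation of Lemma~\ref{lem:Johansson}, thereby obtaining through the Helffer--Sj\"ostrand formula of Lemma~\ref{lem:HS} an improved bound on $|\gamma_k^{(N)}-\gamma_k|$. Combined with concentration of $\lambda_k$ around its $\mu$-mean, itself sharpened via the locally constrained measure of Definition~\ref{def:locallyConstrained}, this yields the claim. The enhanced local convexity given by Lemmas~\ref{lem:localConvexity}--\ref{lem:localLSI} is the source of the sharpening.

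First, fix small $\epsilon,\eta_0>0$ and choose the window size $M=N^{a+\eta_0}$. By the induction hypothesis \eqref{eqn:IndHyp} applied to each $j\in I^{(k,M)}$ and a union bound, there is an event $\mathcal{E}$ of $\mu$-probability at least $1-c_1 e^{-c_2 N^{\delta'}}$ on which $|\lambda_j-\gamma_j|\leq N^{-1+a+\epsilon/2}$ for every $j\in I^{(k,M)}$. Since the classical locations satisfy $|\gamma_i-\gamma_j|\leq C|i-j|/N$, this forces $N^{1-\epsilon}|\lambda_i-\lambda_j|/i_M\leq 1$ on $\mathcal{E}$, and hence $\phi^{(k,M)}\equiv 0$ there. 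Thus $\mu$ and $\omega^{(k,M)}$ agree on $\mathcal{E}$ up to a normalization $Z=1+o(1)$, so any concentration bound proved for $\omega^{(k,M)}$ transfers to $\mu$ modulo an exponentially small loss in probability.

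Second, I would use the sharper convexity of $\omega^{(k,M)}$ to improve $k_N(z)$. Decompose
$$
\sum_{j=1}^N\frac{1}{z-\lambda_j}\;=\;\frac{i_M}{z-\lambda_k^{[M]}}\;+\;\sum_{j\in I^{(k,M)}}\!\Big[\frac{1}{z-\lambda_j}-\frac{1}{z-\lambda_k^{[M]}}\Big]\;+\;\sum_{j\notin I^{(k,M)}}\frac{1}{z-\lambda_j}\,.
$$
The middle sum is invariant under a common shift of $\{\lambda_j\}_{j\in I^{(k,M)}}$, so Lemma~\ref{lem:localLSI} with the constant $\xi\sim N^{1-2\epsilon}$ furnished by Lemma~\ref{lem:localConvexity} controls its variance by $(\xi N)^{-1}\sum_{j\in I^{(k,M)}}|z-\lambda_j|^{-4}$. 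The first term depends only on the window average and has variance bounded by the global Bakry--\'Emery LSI \eqref{eqn:lsi}, gaining an additional factor $1/i_M$ from averaging. The third term is handled directly by the global LSI together with the induction hypothesis, which for $\Im z\geq N^{-1+a+\epsilon}$ refines the pointwise bound $\sum|z-\lambda_j|^{-4}\lesssim N/y^4$ to the integral-type bound $\lesssim N/y^3$. Combining these three contributions yields an improved bound for $k_N(z)/N^2$.

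Inserting this into Lemma~\ref{lem:Johansson} gives $|m_N(z)-m(z)|\leq U/(Ny)$ for $y\geq N^{-1+a+\epsilon}$ with a $U$ strictly smaller than in Corollary~\ref{cor:InitialAccuracy}; extending the bound to $0<y<N^{-1+a+\epsilon}$ via the trivial boundedness of $\Im m$, exactly as in the proof of Corollary~\ref{cor:InitialAccuracy}, and applying Lemma~\ref{lem:HS} to $\widetilde\rho=\rho_1^{(N)}-\rho$, I would conclude $|\gamma_k^{(N)}-\gamma_k|=\OO(N^{-1+3a/4+\epsilon})$ after optimizing~$M$. Combined with the concentration of $\lambda_k-\lambda_k^{[M]}$ at scale $\sqrt{i_M}/N^{1-\epsilon}$ under $\omega^{(k,M)}$ provided by Corollary~\ref{cor:ConcentrationDifferences} (transferred to $\mu$ via the first step) and with the smoothness estimate $|\gamma_k^{[M]}-\gamma_k|=\OO(M^2/N^2)$, this delivers the claimed exponential bound on $\P_\mu(|\lambda_k-\gamma_k|>N^{-1+3a/4+\epsilon})$.

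The main obstacle is the careful management of the non-shift-invariant piece $i_M/(z-\lambda_k^{[M]})$: the enhanced local LSI of $\omega^{(k,M)}$ only controls shift-invariant observables, so the fluctuation of the window average is governed by the weaker global LSI of $\mu$ and constrains the per-iteration gain. Balancing $M=N^{a+\eta_0}$ against the two competing error sources---the probability that $\mathcal{E}$ fails (forcing $M\gtrsim N^a$) and the $1/i_M$ decay of the local convexity constant---produces precisely the improvement exponent $3/4$ rather than an optimal single-step jump. Since iterating the map $a\mapsto 3a/4$ reaches any $\epsilon>0$ in $O(\log(1/\epsilon))$ steps, this proposition combined with the initial value $a_0=1/2$ from Proposition~\ref{prop:InitialEstimate} yields Theorem~\ref{thm:accuracy}.
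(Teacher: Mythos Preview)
Your strategy follows the same circle of ideas as the paper---locally constrained measures, loop equation, Helffer--Sj\"ostrand---but the execution contains a genuine gap that prevents the exponent $3/4$ from emerging.

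The problem is precisely the center-of-mass piece $i_M/(z-\lambda_k^{[M]})$ that you flag as the obstacle. Since it is not shift-invariant, Lemma~\ref{lem:localLSI} does not apply and you are forced back onto the global LSI for $\mu$, with constant $\OO(1/N)$. Computing $|\nabla[i_M/(z-\lambda_k^{[M]})]|^2=i_M/|z-\lambda_k^{[M]}|^4\le i_M/\eta^4$, its contribution to $N^{-2}k_N(z)$ is of order $M/(N^3\eta^4)$. For this to vanish at $\eta=N^{-1+c}$ you need $M\ll N^{4c-1}$; since the event $\mathcal E$ forces $M\gtrsim N^a$, this yields $c>(a+1)/4$, not $c=3a/4$. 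The map $a\mapsto(a+1)/4$ agrees with $a\mapsto 3a/4$ only at the initial value $a=1/2$; thereafter it is strictly weaker and has fixed point $1/3$, so your iteration stalls at accuracy $N^{-2/3}$ and never reaches $N^{-1+\epsilon}$. Your concentration step has the same defect: with a single window $M=N^a$, the global LSI controls $\lambda_k^{[M]}$ only at scale $N^{-(1+a)/2}$, which for $a<1/2$ is worse than $N^{-1+a/2}$.

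The paper avoids this by reversing the order and inserting a multiscale step. First (Proposition~\ref{prop:concentration}) one proves concentration of $\lambda_k$ around $\E_\mu(\lambda_k)$ at scale $N^{-1+a/2}$, not via a single window but via a telescoping chain $\lambda_k-\lambda_k^{[N^a]},\ \lambda_k^{[N^a]}-\lambda_k^{[N^{a+r}]},\ldots,\lambda_k^{[N^{a+qr}]}-\lambda_k^{[\alpha N]}$ (Lemma~\ref{lem:concA2}); each gap is controlled by Corollary~\ref{cor:ConcentrationDifferences} transferred to $\mu$ (Lemmas~\ref{lem:diffExpectations}--\ref{lem:toMu}), and the terminal macroscopic average concentrates at scale $1/N$ by the global LSI. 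Only \emph{then} (Proposition~\ref{prop:accuracy}) is $k_N(z)$ bounded, not through any LSI, but through the elementary inequality $|\var(X)|\le\E|X-\alpha|^2$ with $\alpha_j=\E_\mu(\lambda_j)$ and the freshly proved second-moment bound $\E_\mu|\lambda_j-\alpha_j|^2\le N^{-2+a}$. The decomposition is into $\mathrm{Int}/\mathrm{Ext}/\mathrm{Edg}$ by index distance from $E$, and the Int term gives $N^{3a}/(N^4\eta^4)$, which vanishes exactly for $\eta\gg N^{-1+3a/4}$. Your direct resolvent decomposition is a natural idea, but it cannot substitute for the multiscale concentration step.
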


\noindent{\bf Proof of Theorem \ref{thm:accuracy}.}
This is an immediate consequence of the initial estimate, Proposition \ref{prop:InitialEstimate}, and iterations
of Proposition \ref{prop:induction}.\hfill\qed

Two steps are required in the proof of the above Proposition \ref{prop:induction}.
First we  will prove that   concentration holds  at the smaller scale $N^{-1+\frac{a}{2}}$.

\begin{proposition}\label{prop:concentration}
Assume that (\ref{eqn:IndHyp}) holds. Then for any $\alpha>0$ and $\epsilon>0$,
there are constants $c_1,c_2,\delta>0$ such that for any $N\geq 1$ and $k\in\llbracket \alpha N,(1-\alpha)N\rrbracket$,
$$
\P_\mu\left(|\lambda_k-\E_\mu(\lambda_k)|>\frac{N^{\frac{a}{2}+\epsilon}}{N}\right)\leq
c_1e^{-c_2N^\delta}.
$$
\end{proposition}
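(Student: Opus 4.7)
I would decompose $\lambda_k-\E_\mu\lambda_k$ into a telescoping sum of differences of block averages $\lambda_k^{[M]}$ at several scales, control each difference via Corollary~\ref{cor:ConcentrationDifferences} applied under the matching locally constrained measure $\omega^{(k,M)}$, and control the coarsest block average via the global logarithmic Sobolev inequality~\eqref{eqn:lsi}. Fix a small $\tau>0$ (to be chosen as a small multiple of $\epsilon$) and use $\tau$ in place of the free parameter $\epsilon$ in Definition~\ref{def:locallyConstrained}. Choose a ladder of block widths $M_0<M_1<\dots<M_n$ with $i_{M_j}\sim N^{(j+1)a+\tau}$, where $n$ is the smallest integer for which $i_{M_n}\ge\min(N^{1-a+\tau},\alpha N/2)$. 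Then $n=O(1/a)$, which stays bounded throughout the induction of Theorem~\ref{thm:accuracy} since the running exponent $a$ never falls below $\epsilon$ there.

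\smallskip
The first task is to compare each $\omega^{(k,M_j)}$ with $\mu$. Using the induction hypothesis~\eqref{eqn:IndHyp} together with the bulk Lipschitz bound $|\gamma_i-\gamma_\ell|\le C|i-\ell|/N$, one gets $|\lambda_i-\lambda_\ell|\le Ci_{M_j}/N+2N^{-1+a+\tau/2}\le i_{M_j}/N^{1-\tau}$ for all $i,\ell\in I^{(k,M_j)}$ on a $\mu$-event of probability $\ge 1-e^{-cN^\delta}$. On this event $\phi^{(k,M_j)}=0$, so $\rd\omega^{(k,M_j)}/\rd\mu=1/Z_{M_j}$ with $Z_{M_j}=1+O(e^{-cN^\delta})$; consequently any concentration bound proved under $\omega^{(k,M_j)}$ transfers to $\mu$ at the same scale, and the corresponding expectations differ by $O(e^{-cN^\delta})$. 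Setting $f_{-1}:=\lambda_k-\lambda_k^{[M_0]}$ and $f_j:=\lambda_k^{[M_j]}-\lambda_k^{[M_{j+1}]}$ for $0\le j\le n-1$, each $f_j$ has the form $\sum_i v_i\lambda_i$ with $\sum_i v_i=0$; a short computation gives $|v|^2\le 1$ when $j=-1$ and $|v|^2\le 1/i_{M_j}$ when $j\ge 0$. Corollary~\ref{cor:ConcentrationDifferences}, applied to $f_{-1}$ under $\omega^{(k,M_0)}$ and to $f_j$ $(0\le j\le n-1)$ under $\omega^{(k,M_{j+1})}$, then yields
\[
\P_\mu\!\left(|f_j-\E_\mu f_j|>N^{-1+a/2+O(\tau)}\right)\le e^{-cN^\tau}
\]
for each $j$.

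\smallskip
The main obstacle is the control of the coarsest average $\lambda_k^{[M_n]}$: a direct application of \eqref{eqn:lsi} to $\lambda_k^{[M]}$ only gives standard deviation $N^{-(1+a)/2}$, which is insufficient whenever $a<1/2$. The ladder is introduced precisely to push the problem up to the largest scale $i_{M_n}\ge N^{1-a+\tau}$, at which \eqref{eqn:lsi} combined with Herbst's lemma (applied to $\lambda_k^{[M_n]}$, whose squared Lipschitz norm is $1/i_{M_n}$) does give the required concentration at scale $N^{-1+a/2+\tau}$ with $\mu$-probability $\ge 1-e^{-cN^\tau}$.

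\smallskip
Finally, telescoping
\[
\lambda_k-\E_\mu\lambda_k=\sum_{j=-1}^{n-1}(f_j-\E_\mu f_j)+\bigl(\lambda_k^{[M_n]}-\E_\mu\lambda_k^{[M_n]}\bigr)
\]
and a union bound over the $n+2$ terms yield $|\lambda_k-\E_\mu\lambda_k|\le(n+2)\,N^{-1+a/2+O(\tau)}$ with $\mu$-probability $\ge 1-(n+2)e^{-cN^\tau}$. Since $n$ is bounded independently of $N$ and $\tau$ can be chosen as a sufficiently small multiple of $\epsilon$, this gives the claimed concentration $|\lambda_k-\E_\mu\lambda_k|\le N^{-1+a/2+\epsilon}$.
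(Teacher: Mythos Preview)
Your proposal is correct and follows essentially the same strategy as the paper: telescope $\lambda_k-\E_\mu\lambda_k$ into block-average differences (packaged in the paper as Lemma~\ref{lem:concA2}), control each piece under the locally constrained measure $\omega^{(k,M)}$ via Corollary~\ref{cor:ConcentrationDifferences} and transfer to $\mu$ (the paper does this through the entropy bound of Lemmas~\ref{lem:diffExpectations}--\ref{lem:toMu}, you through the equivalent observation that $\phi^{(k,M)}=0$ on a set of overwhelming $\mu$-probability), and finish off the coarsest block average with the global LSI. The remaining differences are cosmetic: your ladder uses ratio $N^a$ while the paper uses a small $N^r$, and you stop at scale $N^{1-a+\tau}$ whereas the paper pushes all the way to $\alpha N$ (which gives the slightly cleaner $1/N$ concentration for the last term).
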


The above step builds on the locally constrained measures of the previous subsection.
Then, knowing this better concentration, the accuracy can be improved to the scale $N^{-1+\frac{3a}{4}}$.

\begin{proposition}\label{prop:accuracy}
Assume that (\ref{eqn:IndHyp}) holds.
Then for any $\alpha>0$ and $\epsilon>0$,
there is a constant $c>0$ such that for any $N\geq 1$ and
$k\in\llbracket \alpha N,(1-\alpha)N\rrbracket$,
$$
\left|\gamma_k^{(N)}-\gamma_k\right|\leq c\frac{N^{\frac{3a}{4}+\epsilon}}{N}.
$$
\end{proposition}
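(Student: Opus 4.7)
The plan is to refine the argument of Corollary~\ref{cor:InitialAccuracy} by using the sharpened concentration of Proposition~\ref{prop:concentration} to polynomially improve the bound on the variance $\frac{1}{N^2} k_N(z)$ that appears in the loop equation (\ref{eqn:firstLoop}). A sharper variance bound feeds, through Lemma~\ref{lem:Johansson}, into a sharper bound on $|m_N - m|$, and then, through Lemma~\ref{lem:HS}, into a sharper bound on $|\gamma_k^{(N)} - \gamma_k|$.

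Concretely, set $\tau := N^{-1 + a/2 + \e}$. By Proposition~\ref{prop:concentration}, the event $\Omega := \{|\la_j - \E_\mu \la_j| \le \tau \text{ for all bulk } j\}$ has $\P_\mu(\Omega^c) \le c_1 N e^{-c_2 N^\delta}$. Writing $F(\la) := \sum_j (z - \la_j)^{-1}$ and the deterministic quantity $F^* := \sum_j (z - \E_\mu \la_j)^{-1}$, on $\Omega$ one has
\begin{equation*}
|F - F^*| = \Big| \sum_j \frac{\la_j - \E_\mu \la_j}{(z - \la_j)(z - \E_\mu \la_j)} \Big| \le \tau \sum_j \frac{1}{|z - \la_j|\, |z - \E_\mu \la_j|} \le \frac{C \tau N}{y},
\end{equation*}
where the last inequality combines the inductive accuracy (\ref{eqn:IndHyp}) (which lets one replace $\la_j$ and $\E_\mu \la_j$ by $\gamma_j$ in the denominators) with the standard bulk bound $\sum_j |z - \gamma_j|^{-2} \le C N/y$. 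Since $\var_\mu(F) \le \E_\mu (F - F^*)^2$ and $\Omega^c$ contributes only an exponentially small tail through the trivial bound $|F| \le N/y$, this produces
\begin{equation*}
\frac{1}{N^2}\,|k_N(z)| \le \frac{C \tau^2}{y^2} = \frac{C\,N^{a - 2 + O(\e)}}{y^2},
\end{equation*}
a polynomial improvement over the $\OO(1/(Ny^2)^2)$ bound used in Corollary~\ref{cor:InitialAccuracy}.

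Inserting this into Lemma~\ref{lem:Johansson} gives, in the range of $y$ where the improved variance bound applies,
\begin{equation*}
|m_N(z) - m(z)| \le \frac{c}{Ny} + \frac{C N^{a - 2 + O(\e)}}{y^2}.
\end{equation*}
Lemma~\ref{lem:HS} is then applied to $\wt\rho = \rho_1^{(N)} - \rho$ and $S = m_N - m$: the part of (\ref{eqn:cond1}) for $y < \eta$ is handled by the monotonicity of $y \mapsto y\,\Im\, m_N(x + \ii y)$ exactly as in the proof of Corollary~\ref{cor:InitialAccuracy}, and (\ref{eqn:cond2}) follows from the large deviation bound (\ref{eqn:largDev1}). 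An appropriate choice of $\eta$ optimizes $U|\log \eta|/N$ and produces the desired estimate $|\gamma_k^{(N)} - \gamma_k| \le cN^{-1 + 3a/4 + \e}$ uniformly in the bulk, exactly as the conversion from $\int f_E \wt\rho$ to a location estimate is performed at the end of the proof of Corollary~\ref{cor:InitialAccuracy}.

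\noindent\textbf{Main obstacle.} The delicate step is simultaneously balancing the three competing error sources: $1/(N\eta)$ from Johansson's bound, $N^{a-2+\e}/\eta^2$ from the improved variance, and the $\OO(N\eta)$ tail produced by the monotonicity correction in the $y < \eta$ regime, all subject to the constraint that the improved variance bound only kicks in for $y$ above the a priori rigidity scale $N^{-1+a+\e}$ (below which the denominators $|z - \la_j|$ cease to be dominated by $|z - \gamma_j|$). It is this three-way tension that produces the characteristic exponent $3a/4$ rather than $a/2$ or $a$, and drives the geometric rate of the iteration in Proposition~\ref{prop:induction}.
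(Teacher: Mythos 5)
Your overall architecture — feed the sharpened concentration of Proposition~\ref{prop:concentration} into a better bound on the variance $N^{-2}k_N(z)$, pass through Lemma~\ref{lem:Johansson}, and then apply Lemma~\ref{lem:HS} as in Corollary~\ref{cor:InitialAccuracy} — is exactly the paper's strategy. But the specific variance bound you claim, namely $\sum_j |z-\lambda_j|^{-1}|z-\E_\mu\lambda_j|^{-1}\le CN/y$, obtained by replacing $\lambda_j$ and $\E_\mu\lambda_j$ by $\gamma_j$ in the denominators, is only valid for $y\gtrsim N^{-1+a}$: for indices $j$ with $|z-\gamma_j|$ comparable to $y< N^{-1+a}$, the a~priori accuracy $|\lambda_j-\gamma_j|\le N^{-1+a+\e}$ is larger than $|z-\gamma_j|$, so the denominator comparison breaks down precisely for the particles nearest $E$. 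Since those near-$E$ particles dominate the sum, your bound $N^{-2}|k_N(z)|\le C\tau^2/y^2$ cannot be pushed below $y=N^{-1+a}$, and a short exponent count then shows it delivers no improvement at all: taking $\eta=N^{-1+a}$ in Lemma~\ref{lem:HS} forces $U\sim N\eta = N^{a}$ from the monotonicity correction for $y<\eta$, giving accuracy at scale $N^{-1+a}\log N$, i.e.\ nothing gained over the inductive hypothesis. You flag this as the ``main obstacle'' and assert that balancing errors yields $3a/4$, but there is no mechanism in your write-up that resolves it; a uniform bound cannot produce $3a/4$.

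The paper's resolution, which is the genuinely new idea you are missing, is to decompose the index set into $\Int=\{|i-i_0|<N^{a+\e}\}$, $\Ext$ and $\Edg$ (see the proof of Proposition~\ref{prop:accuracy}). For the interior particles the denominator comparison is abandoned entirely in favor of the crude bound $\big|(z-\lambda_k)^{-1}-(z-\alpha_k)^{-1}\big|\le |\lambda_k-\alpha_k|/\eta^2$, which costs $\eta^{-4}$ but needs no information on where $\lambda_k$ is; this is made affordable by the facts that $|\Int|\sim N^{a+\e}$ is small and that $\E_\mu|\lambda_k-\alpha_k|^2\lesssim N^{-2+a}$ by the newly proved concentration. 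That yields the interior contribution $\sim N^{3a+2\e}/(N^4\eta^4)$, which vanishes for $\eta\gg N^{-1+3a/4+\e/2}$ and is what sets the exponent $3a/4$. The denominator comparison (your argument) is used only for $\Ext$, where it is legitimate and gives the subleading $N^{-a}$. So your argument is essentially the exterior half of the paper's proof, and the interior split — not a ``balance of three errors'' — is the ingredient that produces $3a/4$.
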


Proposition \ref{prop:induction} is an immediate consequence of the last  two  propositions. The proofs of these  two propositions are postponed to the end of this section, after
the following necessary series of lemmas.

\begin{lemma}\label{lem:concGapsOmega} Take any $\epsilon>0$ and $\alpha>0$. There are constants $c_1,c_2>0$ such that for any
$N\geq 1$, any integers $1\leq M_1\leq M\leq \alpha N$, any $k\in\llbracket\alpha N,(1-\alpha)N\rrbracket$,
and $\omega^{(k,M)}$ from Definition \ref{def:locallyConstrained} associated with $k,M,\epsilon$,
$$
\P_{\omega^{(k,M)}}\left(\left|\lambda_k^{[M_1]}-\lambda_k^{[M]}
-\E_{\omega^{(k,M)}}\left(\lambda_k^{[M_1]}- \lambda_k^{[M]}\right)\right|>\frac{x}{N^{1-\epsilon}}\sqrt{\frac{M}{M_1}}\right)\leq c_1e^{-c_2 x^2}.
$$
\end{lemma}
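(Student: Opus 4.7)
The plan is to realize the block-average difference as a mean-zero linear combination of the $\lambda_i$'s and then invoke Corollary \ref{cor:ConcentrationDifferences} directly. Concretely, write
\[
\lambda_k^{[M_1]}-\lambda_k^{[M]}=\sum_{i\in I^{(k,M)}}v_i\lambda_i,
\]
where $v_i=\frac{1}{i_{M_1}}-\frac{1}{i_M}$ for $i\in I^{(k,M_1)}$, $v_i=-\frac{1}{i_M}$ for $i\in I^{(k,M)}\setminus I^{(k,M_1)}$, and $v_i=0$ otherwise. A direct check gives $\sum_i v_i=0$, which is exactly the hypothesis required by Corollary \ref{cor:ConcentrationDifferences} (the linear functional is insensitive to rigid translations of $(\lambda_i)_{i\in I^{(k,M)}}$, which is the direction along which $\omega^{(k,M)}$ has no improved convexity).

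Next I would compute $|v|^2$. A small calculation gives
\[
|v|^2=i_{M_1}\Bigl(\tfrac{1}{i_{M_1}}-\tfrac{1}{i_M}\Bigr)^2+(i_M-i_{M_1})\tfrac{1}{i_M^2}=\tfrac{1}{i_{M_1}}-\tfrac{1}{i_M}\le \tfrac{1}{i_{M_1}}.
\]
Plugging $|v|^2\le 1/i_{M_1}$ into Corollary \ref{cor:ConcentrationDifferences} yields, for any $y>0$,
\[
\P_{\omega^{(k,M)}}\Bigl(\bigl|\lambda_k^{[M_1]}-\lambda_k^{[M]}-\E_{\omega^{(k,M)}}(\lambda_k^{[M_1]}-\lambda_k^{[M]})\bigr|>y\Bigr)\le 2\exp\!\Bigl(-\tfrac{\beta}{4}\tfrac{N^{2-2\epsilon}i_{M_1}}{i_M}y^2\Bigr).
\]
Finally, setting $y=xN^{-1+\epsilon}\sqrt{M/M_1}$ and using $i_{M_1}/M_1,\;M/i_M\asymp 1$, the exponent becomes $-c\beta x^2$ for a universal constant $c>0$, which is the claimed tail bound.

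There is no serious obstacle here: the lemma is a direct consequence of the Brascamp--Lieb/Bakry--Émery concentration result already established as Corollary \ref{cor:ConcentrationDifferences}. The only points to verify carefully are (i) that the difference of block averages lies in the kernel of the "rigid shift" direction (i.e.\ $\sum v_i=0$), so the improved local convexity bound from Lemma \ref{lem:localConvexity} applies, and (ii) the bookkeeping of constants $i_{M_1}\approx 2M_1$, $i_M\approx 2M$ to produce the prefactor $\sqrt{M/M_1}$ in the stated scale. Everything else is routine.
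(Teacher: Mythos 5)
Your proof is correct and follows the same route as the paper: realize $\lambda_k^{[M_1]}-\lambda_k^{[M]}$ as $\sum_i v_i\lambda_i$ with $\sum_i v_i=0$, bound $|v|^2 \lesssim 1/M_1$, and feed this into Corollary \ref{cor:ConcentrationDifferences}. The only cosmetic difference is that you compute $|v|^2 = \tfrac{1}{i_{M_1}}-\tfrac{1}{i_M}$ exactly rather than bounding it crudely as the paper does, which changes nothing in the conclusion.
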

\begin{proof}
Note that
$\lambda_k^{[M_1]}-\lambda_k^{[M]}$ is of type $\sum_{I^{(k,M)}}v_i\lambda_i$ with $\sum v_i=0$ and
$$|v|^2=\sum_{1}^{M_1}\left(\frac{1}{M_1}-\frac{1}{M}\right)^2+\sum_{M_1+1}^M\frac{1}{M^2}
\leq \sum_{1}^{M_1}\left(\frac{2}{M_1^2}+\frac{2}{M^2}\right)+\sum_{M_1+1}^M\frac {2}{M^2}
\leq \frac{4}{M_1}.$$
Together with  Corollary \ref{cor:ConcentrationDifferences}, this concludes the proof.
\end{proof}

\begin{lemma}\label{lem:diffExpectations}
Assume that for $\mu$ accuracy  holds at scale $N^{-1+a}$, i.e. (\ref{eqn:IndHyp}). Take arbitrary $\alpha,\epsilon>0$.
There exist constants $c_1,c_2,\delta>0$ such that for any
$N\geq 1$, for any integer $M$ satisfying $N^a\leq M\leq \alpha N/2$, for any $k\in\llbracket\alpha N,(1-\alpha)N\rrbracket$
and for any  $j\in\llbracket 1,N\rrbracket$, we have
$$
|\E_\mu(\lambda_j)-\E_{\omega^{(k,M)}}(\lambda_j)|\leq c_1 e^{-c_2 N^{\delta}},
$$
where the measure $\omega^{(k,M)}$ is defined in (\ref{eqn:omega}) from Definition \ref{def:locallyConstrained}  with parameters $k,M,\epsilon$.
\end{lemma}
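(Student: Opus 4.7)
The plan is to show that the Radon--Nikodym derivative $e^{-\phi^{(k,M)}}/Z_{\omega^{(k,M)}}$ equals $1/Z_{\omega^{(k,M)}}$ except on an event of exponentially small $\mu$-probability, so that all moments of $\lambda_j$ under the two measures are exponentially close. The hypothesis $M \ge N^{a}$ together with the induction hypothesis \eqref{eqn:IndHyp} is exactly what makes the penalizing term $\phi^{(k,M)}$ effectively absent.

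First I would introduce the good event
\[
\mathcal{A} := \bigcap_{i,j\in I^{(k,M)}} \Bigl\{\,|\lambda_i-\lambda_j|\le i_M N^{-1+\epsilon}\,\Bigr\},
\]
on which $\phi^{(k,M)}=0$ by the definition of $\theta$. I pick $\epsilon'<\epsilon$ and apply \eqref{eqn:IndHyp} with accuracy $N^{-1+a+\epsilon'}$. Since $I^{(k,M)}\subset \llbracket \alpha N/2,(1-\alpha/2)N\rrbracket$ lies in the bulk and $\rho$ is bounded below on $[A+\delta,B-\delta]$, we have $|\gamma_i-\gamma_j|\le C i_M/N$ for $i,j\in I^{(k,M)}$. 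On the event where $|\lambda_i-\gamma_i|\le N^{-1+a+\epsilon'}$ for all $i\in I^{(k,M)}$,
\[
|\lambda_i-\lambda_j| \le C\frac{i_M}{N} + 2N^{-1+a+\epsilon'} \le i_M N^{-1+\epsilon},
\]
where the last inequality uses both $C\le N^\epsilon$ for large $N$ and the hypothesis $M\ge N^a$, which gives $N^{a+\epsilon'}\le M N^\epsilon$ once $\epsilon'<\epsilon$. A union bound over the $O(i_M^2)$ pairs then yields $\P_\mu(\mathcal{A}^c)\le c_1 e^{-c_2 N^\delta}$.

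Next, since $\phi^{(k,M)}\ge 0$ and $\phi^{(k,M)}=0$ on $\mathcal{A}$, the identity $e^{-\phi^{(k,M)}}-1 = (e^{-\phi^{(k,M)}}-1)\mathbf{1}_{\mathcal{A}^c}$ together with $|e^{-\phi^{(k,M)}}-1|\le 1$ gives
\[
|Z_{\omega^{(k,M)}}-1| = \bigl|\E_\mu(e^{-\phi^{(k,M)}}-1)\bigr| \le \P_\mu(\mathcal{A}^c) \le c_1 e^{-c_2 N^\delta}.
\]
Writing
\[
\E_{\omega^{(k,M)}}(\lambda_j) - \E_\mu(\lambda_j) = \frac{1}{Z_{\omega^{(k,M)}}}\Bigl[\E_\mu\bigl(\lambda_j(e^{-\phi^{(k,M)}}-1)\bigr) + (1-Z_{\omega^{(k,M)}})\E_\mu(\lambda_j)\Bigr],
\]
the first term in the bracket is bounded, by Cauchy--Schwarz, by $\sqrt{\E_\mu(\lambda_j^2)\,\P_\mu(\mathcal{A}^c)}$. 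The large deviation estimate \eqref{eqn:largDev1} together with \eqref{eqn:BPS2} shows that $\E_\mu(\lambda_j^2)$ is bounded uniformly in $j$ and $N$, so this first contribution is $O(e^{-(c_2/2)N^\delta})$. The second contribution is controlled by the same uniform moment bound and the estimate on $|1-Z_{\omega^{(k,M)}}|$. Combining (and absorbing the polynomial $i_M^2$ factor from the union bound into a slightly smaller $c_2$) yields the claim.

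The only genuinely delicate point is the verification that $\mathcal{A}$ has exponentially small complement; this is where the hypothesis $M\ge N^a$ is used in an essentially sharp way, and it also explains why the $\epsilon$ entering the definition of $\omega^{(k,M)}$ must be chosen strictly larger than the $\epsilon'$ controlling accuracy. Everything after that is a routine perturbative computation using boundedness of moments of $\lambda_j$.
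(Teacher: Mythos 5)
Your proof is correct, but it takes a genuinely different route from the paper's. The paper bounds the difference of expectations via the chain \emph{difference of expectations} $\le$ \emph{total variation} $\le \sqrt{S(\mu\mid\omega^{(k,M)})}$, and then bounds the relative entropy using the logarithmic Sobolev inequality and the observation that $\theta'$ vanishes unless the extreme gap $\lambda_{k+M}-\lambda_{k-M}$ exceeds $i_M N^{-1+\epsilon}$, which is an exponentially rare event under the accuracy hypothesis. You instead work directly with the Radon--Nikodym derivative $e^{-\phi^{(k,M)}}/Z_{\omega^{(k,M)}}$, note that $\phi^{(k,M)}$ vanishes identically on the event $\mathcal{A}$ where all $\lambda_i$, $i\in I^{(k,M)}$, are within $N^{-1+a+\epsilon'}$ of their classical locations, show $\P_\mu(\mathcal{A}^c)$ is subexponentially small by the induction hypothesis, and conclude by Cauchy--Schwarz; no entropy or LSI is invoked. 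This is more elementary and arguably cleaner for the statement at hand. One should note, however, that the paper's proof does double duty: the entropy bound it establishes (equation \eqref{eqn:entropyBound}) is explicitly reused later, in the proof of Lemma~\ref{lem:toMu}, to control $|\P_{\omega^{(k,M)}}(A)-\P_\mu(A)|$ via $\sqrt{2S(\mu\mid\omega^{(k,M)})}$. If one were to adopt your proof, Lemma~\ref{lem:toMu} would still need the entropy estimate separately (or a corresponding total-variation estimate, which your argument also yields in substance since $\int|\rd\omega^{(k,M)}-\rd\mu|\le 2\P_\mu(\mathcal{A}^c)/Z_{\omega^{(k,M)}}+ |1-Z_{\omega^{(k,M)}}|$). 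A minor slip: the union bound you need is over the $i_M$ points $i\in I^{(k,M)}$ (showing the event $\{|\lambda_i-\gamma_i|\le N^{-1+a+\epsilon'}\ \forall i\in I^{(k,M)}\}$ is contained in $\mathcal{A}$), not over $O(i_M^2)$ pairs; this does not affect the conclusion since the polynomial prefactor is absorbed either way.
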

\begin{proof}
First, the total variation norm is bounded by the square root of the entropy, and by (\ref{eqn:BPS2}) the particles are bounded with very high probability, so we have
$$
|\E_\mu(\lambda_j)-\E_{\omega^{(k,M)}}(\lambda_j)|\leq \tilde C  \sqrt { S(\mu\mid \omega^{(k,M)})} +\OO(e^{-\tilde c N})
$$
for some  $\tilde c,\tilde C>0$ independent of $k, j$.
For the measures we are interested in, using the logarithmic Sobolev inequality for $\mu$, we have for some $c,C>0$
$$
S(\mu\mid \omega^{(k,M)})
\leq
C N^c\,\E_\mu\left(\theta'\left(\frac{(\lambda_{k+M}-\lambda_{k-M})N^{1-\epsilon}}{i_M}\right)^2\right).
$$
Now, as $\theta''(x)=0$ if $|x|<1$ and $\theta'(x)^2<4 x^2$, for some new and universal constants $c,C>0$
\begin{align}\label{eqn:entropyBound}
S(\mu\mid \omega^{(k,M)})
&\leq
C N^c\, \E_\mu\left((\lambda_{k+M}-\lambda_{k-M})^2
{\bf 1}_{|\lambda_{k+M}-\lambda_{k-M}|>\frac{i_M}{N^{1-\epsilon}}}\right)\nonumber\\
&\leq C N^c\,
\Big[ \E_\mu\left((\lambda_{k+M}-\lambda_{k-M})^4\right)\Big]^{1/2}
\Big[\P_\mu\left(
|\lambda_{k+M}-\lambda_{k-M}|>\frac{i_M}{N^{1-\epsilon}}\right)\Big]^{1/2}.
\end{align}
This moment of order 4 is polynomially bounded, for example just by concentration of order $N^{-1/2}$ for all $\lambda_j$'s under $\mu$.
Concerning the above probability, as $|\gamma_{k+M}-\gamma_{k-M}|=\OO(M/N)$, for sufficiently large $N$
if
$|\lambda_{k+M}-\lambda_{k-M}|>\frac{i_M}{N^{1-\epsilon}}$ then either
$|\lambda_{k+M}-\gamma_{k+M}|>M/N^{1-\epsilon}$ or $|\lambda_{k-M}-\gamma_{k-M}|>M/N^{1-\epsilon}$.
But accuracy  holds at scale $N^{-1+a}<M/N$, so both previous events have exponentially small probabilities, uniformly in $k$.
Indeed, one has $k-M,k+M\in\llbracket \alpha N/2,(1-\alpha/2)N\rrbracket$ and by (\ref{eqn:IndHyp})there are constants
$\delta,c_1,c_2>0$ such that for any $N\geq 1$ and $k\in\llbracket \alpha N/2,(1-\alpha/2) N\rrbracket$,
$$
\P_\mu\left(|\lambda_k-\gamma_k|> M/N^{1-\epsilon}\right)\leq c_1e^{-c_2N^\delta}.
$$
This concludes the proof.
\end{proof}

\begin{lemma}\label{lem:toMu}
Assume that for $\mu$ accuracy and concentration hold at scale $N^{-1+a}$. Take arbitrary $\alpha,\tilde\epsilon>0$.
There are constants $c_1,c_2,\delta>0$ such that for any
$N\geq 1$, any integers $N^a\leq M\leq \alpha N$, $1\leq M_1\leq M$, and $k\in\llbracket 2\alpha N,(1-2\alpha)N\rrbracket$,
$$
\P_{\mu}\left(\left|\lambda_k^{[M_1]}-\lambda_k^{[M]}
-\E_{\mu}\left(\lambda_k^{[M_1]}-
\lambda_k^{[M]}\right)\right|>\frac{N^{\tilde\epsilon}}{N}\sqrt{\frac{M}{M_1}}\right)\leq c_1e^{-c_2 N^\delta}.
$$
\end{lemma}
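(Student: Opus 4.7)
The plan is to transfer the concentration estimate from the locally constrained measure $\omega = \omega^{(k,M)}$, where it is already available from Lemma \ref{lem:concGapsOmega}, back to $\mu$. The bridge between expectations is provided by Lemma \ref{lem:diffExpectations}, and the bridge between probabilities is a Radon--Nikodym comparison that splits configuration space into a good event, on which $\rd\mu/\rd\omega$ is essentially constant, and its complement, whose $\mu$-mass is controlled by the induction hypothesis (\ref{eqn:IndHyp}).

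First I would fix the parameter $\epsilon = \tilde\epsilon/2$ in Definition \ref{def:locallyConstrained}, and write $Y := \lambda_k^{[M_1]}-\lambda_k^{[M]}$. Since $I^{(k,M_1)}\subset I^{(k,M)}$, $Y=\sum_{i\in I^{(k,M)}}v_i\lambda_i$ with $\sum_i v_i=0$ and $\sum_i|v_i|\le 2$. Applying Lemma \ref{lem:concGapsOmega} with the choice $x=N^{\tilde\epsilon/2}/2$ gives
$$\P_\omega\Bigl(|Y-\E_\omega Y|>\tfrac{N^{\tilde\epsilon}}{2N}\sqrt{M/M_1}\Bigr)\le c_1 e^{-c_2 N^{\tilde\epsilon}/4}.$$
Lemma \ref{lem:diffExpectations}, applied term by term and weighted by the $v_i$'s, yields $|\E_\mu Y-\E_\omega Y|\le 2c_1'e^{-c_2' N^{\delta'}}$, which is negligible compared to the concentration scale. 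Thus centering by $\E_\mu Y$ in place of $\E_\omega Y$ costs nothing.

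Next, to pass from $\omega$ to $\mu$, I would use $\rd\mu/\rd\omega=Ze^{\phi^{(k,M)}}$ with $Z\le 1$ and introduce the good event
$$G=\bigl\{\lambda_{k+M}-\lambda_{k-M}\le i_M/N^{1-\tilde\epsilon/2}\bigr\}.$$
On $G$ every pairwise difference inside $I^{(k,M)}$ satisfies $N^{1-\tilde\epsilon/2}(\lambda_i-\lambda_j)/i_M\in[-1,1]$, so $\theta$ vanishes and $\phi^{(k,M)}\equiv 0$ on $G$. Hence $\mu(A\cap G)\le Z\omega(A)\le\omega(A)$ and
$$\P_\mu(A)\le\omega(A)+\mu(G^c).$$
For the residual term, if $\lambda_{k+M}-\lambda_{k-M}>i_M/N^{1-\tilde\epsilon/2}\ge 2MN^{\tilde\epsilon/2}/N$, then since $|\gamma_{k+M}-\gamma_{k-M}|=\OO(M/N)$, at least one of $|\lambda_{k\pm M}-\gamma_{k\pm M}|$ must exceed $cMN^{\tilde\epsilon/2}/N\ge cN^{-1+a+\tilde\epsilon/2}$. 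Since the assumption $k\in\llbracket 2\alpha N,(1-2\alpha)N\rrbracket$ together with $M\le\alpha N$ places $k\pm M$ in $\llbracket\alpha N,(1-\alpha)N\rrbracket$, the hypothesis (\ref{eqn:IndHyp}) (applied with $\epsilon\rightsquigarrow\tilde\epsilon/3$) bounds $\mu(G^c)$ by $c_1 e^{-c_2 N^\delta}$. Taking $A=\{|Y-\E_\mu Y|>(N^{\tilde\epsilon}/N)\sqrt{M/M_1}\}$ and combining the bounds closes the argument.

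I do not anticipate a serious obstacle here; the proof is essentially a tightly orchestrated use of the previous three results. The one conceptual subtlety is that the Radon--Nikodym derivative $\rd\mu/\rd\omega=Ze^{\phi^{(k,M)}}$ is unbounded, so a naive uniform change of measure is forbidden; the key is that the flat-on-$[-1,1]$ property of $\theta$ makes this derivative \emph{exactly} $Z\le 1$ on the good event $G$, while the bad event $G^c$ is controlled by accuracy alone, not by any large-deviation estimate for $\phi^{(k,M)}$.
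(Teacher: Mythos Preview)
Your proof is correct, and the structure—center by $\E_\omega$ via Lemma~\ref{lem:diffExpectations}, concentrate under $\omega$ via Lemma~\ref{lem:concGapsOmega}, then transfer to $\mu$—matches the paper's. The one genuine difference is the transfer step. The paper bounds $|\P_\mu(A)-\P_\omega(A)|$ by total variation and then Pinsker, $\sqrt{2S(\mu\mid\omega)}$, and shows the entropy is subexponentially small via the LSI for $\mu$ together with the fact that $\theta'$ vanishes on $[-1,1]$ (so the Dirichlet-form bound on $S$ reduces to the same accuracy probability $\P_\mu(|\lambda_{k+M}-\lambda_{k-M}|>i_M N^{-1+\epsilon})$). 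You instead split $\P_\mu(A)\le\P_\mu(A\cap G)+\P_\mu(G^c)$ and use the exact identity $\rd\mu=Z\,\rd\omega$ on $G$ with $Z\le 1$, bypassing both Pinsker and the LSI. Your route is more elementary and exploits the flatness of $\theta$ more directly; the paper's route is the standard entropy machinery, which is more portable but here unwinds to the same accuracy estimate on $G^c$ anyway. Both arrive at the same place with the same inputs.
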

\begin{proof}
Consider the measure $\omega^{(k,M)}$ associated with the choice $\epsilon=\tilde\epsilon/2$.
First note that, by Lemma \ref{lem:diffExpectations},
$$
\left|\E_{\mu}\left(\lambda_k^{[M_1]}-
\lambda_k^{[M]}\right)-\E_{\omega^{(k,M)}}\left(\lambda_k^{[M_1]}-
\lambda_k^{[M]}\right)\right|<ce^{-CN^{\delta_1}},
$$
for some coefficients $c,C,\delta_1$, uniformly in $N,k,M,M_1$.
As a consequence of this exponentially small difference of expectations, the probability bound to prove is equivalent to
the existence of $c_1,c_2,\delta>0$ such that
$$
\P_{\mu}\left(A\right)\leq c_1e^{-c_2 N^\delta},\ A=\left\{
\left|\lambda_k^{[M_1]}-\lambda_k^{[M]}-\E_{\omega^{(k,M)}}\left(\lambda_k^{[M_1]}-
\lambda_k^{[M]}\right)\right|>\frac{N^{\tilde\epsilon}}{N}\sqrt{\frac{M}{M_1}}
\right\},
$$
with the same uniformity requirements. By Lemma \ref{lem:concGapsOmega}, we know that there are such constants with
$$
\P_{\omega^{(k,M)}}\left(A\right)\leq c_1e^{-c_2 N^\delta},
$$
so the proof will be complete if we can prove that $|\P_{\omega^{(k,M)}}\left(A\right)-\P_{\mu}\left(A\right)|$
is uniformly exponentially small.
By the total variation\slash entropy inequality we have:
\be
|\P_{\omega^{(k,M)}}\left(A\right)-\P_{\mu}\left(A\right)|
\leq
\int|\rd\omega^{(k,M)}-\rd\mu|\leq
\sqrt{2S (\mu\mid \omega^{(k,M)})}.
\label{entropyineq}
\ee
This entropy was shown to be exponentially small in the proof of Lemma \ref{lem:diffExpectations}, see equation (\ref{eqn:entropyBound}).
\end{proof}

\begin{lemma}\label{lem:concA2}
Assume that for $\mu$ accuracy and concentration hold at scale $N^{-1+a}$.
For any $\tilde\epsilon>0$ and $\alpha>0$, there are constants $c_1,c_2,\delta>0$ such that for any $N\geq 1$
and $k\in\llbracket 2\alpha N,(1-2\alpha)N\rrbracket$,
$$
\P_\mu\left(\left|\lambda_k-\lambda_k^{[\alpha N]}
-\E_\mu(\lambda_k-\lambda_k^{[\alpha N]}) \right|
> \frac{N^{\frac{a}{2}+\tilde \epsilon}}{N} \right)
\leq
c_1e^{-c_2N^\delta}.
$$
\end{lemma}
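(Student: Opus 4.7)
My plan is to decompose $\lambda_k - \lambda_k^{[\alpha N]}$ into one coarse piece at scale $N^a$ and $O(\log N)$ dyadic pieces between $N^a$ and $\alpha N$, and then control each using the previous lemmas. Concretely, I will set $M_0 = \lceil N^a\rceil$, $M_{j+1} = \min(2M_j, \alpha N)$, stopping after $J = O(\log N)$ steps with $M_J = \alpha N$, and telescope
$$
\lambda_k - \lambda_k^{[\alpha N]} = \bigl(\lambda_k - \lambda_k^{[M_0]}\bigr) + \sum_{j=0}^{J-1}\bigl(\lambda_k^{[M_j]} - \lambda_k^{[M_{j+1}]}\bigr).
$$

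Each dyadic piece is directly covered by Lemma~\ref{lem:toMu}, applied with $M_1 = M_j \geq 1$, $M = M_{j+1} \in [N^a,\alpha N]$, and $\tilde\epsilon/2$ in place of $\tilde\epsilon$. That lemma gives concentration of $\lambda_k^{[M_j]} - \lambda_k^{[M_{j+1}]}$ around its $\mu$-expectation at scale $\sqrt{2}\,N^{\tilde\epsilon/2}/N$ with exceptional probability $\leq c_1 e^{-c_2 N^\delta}$. A union bound over the $J = O(\log N)$ dyadic pieces then controls the entire telescoping sum around its $\mu$-expectation at scale $O(N^{-1+\tilde\epsilon/2}\log N)$, which is comfortably below the target $N^{-1+a/2+\tilde\epsilon}$.

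The coarse piece $\lambda_k - \lambda_k^{[M_0]}$ is the main obstacle, as it is not formally covered by Lemma~\ref{lem:toMu} (which requires $M_1 \geq 1$, whereas $\lambda_k = \lambda_k^{[0]}$ corresponds to $M_1 = 0$). I plan to recover it by directly repeating the structure of the proof of Lemma~\ref{lem:toMu} in this borderline case. Under $\omega^{(k,M_0)}$ (with parameter $\tilde\epsilon/2$), the quantity $\lambda_k - \lambda_k^{[M_0]}$ takes the form $\sum_{i\in I^{(k,M_0)}} v_i \lambda_i$ with $\sum v_i = 0$ and $|v|^2 = 1 - 1/i_{M_0} \leq 1$, so Corollary~\ref{cor:ConcentrationDifferences} gives concentration around $\E_{\omega^{(k,M_0)}}(\cdot)$ at scale $\sqrt{i_{M_0}}\,N^{\tilde\epsilon/2}/N = O(N^{-1+a/2+\tilde\epsilon/2})$. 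The transfer to $\mu$ then uses the same total-variation/entropy ingredients as in Lemma~\ref{lem:toMu}: the condition $M_0 \geq N^a$, combined with the inductive hypothesis~\eqref{eqn:IndHyp}, makes the entropy bound~\eqref{eqn:entropyBound} exponentially small, so probabilities can be transferred between $\omega^{(k,M_0)}$ and $\mu$ with negligible error, while Lemma~\ref{lem:diffExpectations} identifies the two expectations up to an exponentially small error.

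Adding the two contributions via the triangle inequality and absorbing the logarithmic factors into $\tilde\epsilon$ yields the claimed concentration. The rigidity in the choice $M_0 \sim N^a$ is what produces the target exponent: any smaller scale would destroy the entropy comparison between $\omega^{(k,M_0)}$ and $\mu$ (which rests on the inductive accuracy at scale $N^{-1+a}$), while any larger scale would inflate the $\sqrt{i_{M_0}}$ factor beyond $N^{a/2}$ and exceed the target bound.
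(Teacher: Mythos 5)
Your proposal is correct and follows essentially the same strategy as the paper's proof: split $\lambda_k-\lambda_k^{[\alpha N]}$ into a coarse piece at scale $\sim N^a$ plus a multiscale telescope up to $\alpha N$, control each increment via Lemma~\ref{lem:toMu}, and combine. The two minor differences are worth noting. First, the paper decomposes the telescope using geometric scales $N^{a+\ell r}$ with small fixed $r$, giving $q+1=O(1/r)$ increments with concentration scale $N^{\epsilon+r/2}/N$ each, which keeps the number of terms bounded independently of $N$; you use dyadic scales, giving $O(\log N)$ increments with $O(1)$ ratios, and then absorb the $\log N$ into the slack in $\tilde\epsilon$. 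Both work, and your version is slightly simpler to state while the paper's avoids any union bound over a growing number of terms. Second, and more substantively, the paper handles the coarse piece by invoking Lemma~\ref{lem:toMu} with $M_1=1$, which literally controls $\lambda_k^{[1]}-\lambda_k^{[N^a]}$ rather than $\lambda_k-\lambda_k^{[N^a]}$; this is a small imprecision (the difference $\lambda_k-\lambda_k^{[1]}$ is itself controlled by Corollary~\ref{cor:ConcentrationDifferences} at the same scale). Your choice to go back to Corollary~\ref{cor:ConcentrationDifferences} directly, using $|v|^2=1-1/i_{M_0}\le 1$, is the cleaner and more rigorous treatment of this boundary case, and your observation that $M_0\sim N^a$ is forced from both directions (large enough to make the entropy comparison work under~\eqref{eqn:IndHyp}, small enough to keep the $\sqrt{i_{M_0}}$ factor below $N^{a/2}$) is exactly the right picture.
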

Note that in this lemma and its proof, for non-integer $M$ we still
 write $\lambda_k^{[M]}$ for
$\lambda_k^{[\lfloor M\rfloor]}$, where $\lfloor M \rfloor$ means
the lower integer part of $M$.

\begin{proof}
Note first that
\begin{multline*}
\left|\lambda_k-\lambda_k^{[\alpha N]}-\E_\mu(\lambda_k-\lambda_k^{[\alpha N]})
\right|\leq
\left|\lambda_k-\lambda_k^{[N^a]}-\E_\mu(\lambda_k-\lambda_k^{[N^a]})
\right|\\+
\left|\lambda_k^{[N^a]}-\lambda_k^{[\alpha N]}-\E_\mu(\lambda_k^{[N^a]}
-\lambda_k^{[\alpha N]})
\right|.
\end{multline*}
By the choice $M_1=1$, $M=N^a$ in Lemma \ref{lem:toMu}, the probability that the
first term is greater than $\frac{N^{\frac{a}{2}+\tilde \epsilon}}{N}$ is exponentially
small, uniformly in $k$, as desired.
Concerning the second term, given some $r>0$ and $q\in\NN$ defined by $1-r\leq a+q r<1$, it is bounded by
\begin{multline*}
\sum_{\ell=0}^{q-1}
\left|\lambda_k^{[N^{a+(\ell+1)r}]}-\lambda_k^{[N^{a+\ell r}]}
-\E_\mu\left(\lambda_k^{[N^{a+(\ell+1)r}]}-\lambda_k^{[N^{a+\ell r}]}\right)
\right|\\
+
\left|\lambda_k^{[a+qr]}-\lambda_k^{[\alpha N]}
-\E_\mu\left(\lambda_k^{[N^{a+qr}]}-\lambda_k^{[\alpha N]}\right)
\right|.
\end{multline*}
By Lemma \ref{lem:toMu}, for any $\epsilon>0$, each one of these $q+1$ terms has an exponentially small probability of being greater than
$
\frac{N^{\epsilon+\frac{r}{2}}}{N}
$. Consequently, choosing any $\epsilon$ and $r$ (and therefore $q$) such that $\epsilon+\frac{r}{2}<a/2$ concludes the proof.
\end{proof}

\bigskip
\noindent{\bf Proof of Proposition \ref{prop:concentration}.}
We just need to write
$$
|\lambda_k-\E_\mu(\lambda_k)|\leq
\left|\lambda_k-\lambda_k^{[\alpha N]}-
\E_\mu(\lambda_k-\lambda_k^{[\alpha N]})\right|
+
\left|\lambda_k^{[\alpha N]}-\E_\mu(\lambda_k^{[\alpha N]})\right|.
$$
By Lemma \ref{lem:concA2}, the first term has exponentially small probability to be greater than $\frac{N^{\frac{a}{2}+\epsilon}}{N}$. By the logarithmic Sobolev inequality for $\mu$ with constant $\OO(1)$ (see (\ref{eqn:LSImu})), the second term has an even better concentration, at scale $1/N$:
$$
\P\left(|\lambda_k^{[\alpha N]}-\E_\mu(\lambda_k^{[\alpha N]})|>\frac{x}{N}\right)\leq C\, e^{-c x^2}.
$$
This concludes the proof of the proposition.
\hfill\qed
\vspace{0.3cm}

\noindent{\bf Proof of Proposition \ref{prop:accuracy}.}
Thanks to Lemmas \ref{lem:Johansson}, \ref{lem:HS}, and reproducing the proof of
Corollary \ref{cor:InitialAccuracy}, we know it is sufficient to prove that for any $\delta>0$
$$
\frac{1}{N^2}\left|\var\left(\sum_k\frac{1}{z-\lambda_k}\right)\right|
$$
goes uniformly to 0 where $z=E+\ii\eta$, $E\in[A+\delta,B-\delta]$ and
$\eta\geq N^{-1+\frac{3a}{4}+\epsilon}$.

Let $i_0$ be the index in $[0,N]$ such that the typical position $\gamma^N_{i_0}$ is the closest to $E$.
Define the indexes of particles close to $E$, far from $E$ and in the edge as
\begin{align*}
\Int&=\{i:|i-i_0|<N^{a+\epsilon}\},\\
\Ext&=\{i:|i-i_0|\geq N^{a+\epsilon}, i\in\llbracket \alpha N,(1-\alpha)N\rrbracket\},\\
\Edg&=\{i: i\not\in\llbracket \alpha N,(1-\alpha)N\rrbracket\},
\end{align*}
where $\alpha$ is small enough such that
\begin{equation}\label{eqn:alphaGamma}
\gamma_{\alpha N}<A+\frac{\delta}{2}<A+\delta<E<B-\delta<B-\frac{\delta}{2}<\gamma_{(1-\alpha)N}.
\end{equation}
We choose $\alpha_k=\E_\mu(\lambda_k)$ in the following equations.
Then
\begin{multline}\label{eqn:EdgExtInt}
\frac{1}{N^2}\left|\var_\mu\left(\sum_{k}\frac{1}{z-\lambda_k}\right)\right|
\leq
\frac{C}{N^2}\E_\mu\left(\left|\sum_{k\in \Edg}\frac{1}{z-\lambda_k}-\frac{1}{z-\alpha_k}\right|^2\right)\\
+
\frac{C}{N^2}\E_\mu\left(\left|\sum_{k\in \Ext}\frac{1}{z-\lambda_k}-\frac{1}{z-\alpha_k}\right|^2\right)
+
\frac{C}{N^2}\E_\mu\left(\left|\sum_{k\in \Int}\frac{1}{z-\lambda_k}-\frac{1}{z-\alpha_k}\right|^2\right).
\end{multline}
The edge term is bounded by
\begin{equation}\label{e:edge}
\frac{C}{N}\sum_{\Edg}\E_\mu\left(\left|\frac{1}{z-\lambda_k}-\frac{1}{z-\alpha_k}\right|^2\right)
\leq
\frac{C'}{N\eta^2}\sum_{\Edg}\P\left(|E-\lambda_k|<\frac{\delta}{3}\right)
+
\frac{C'}{N\delta^2}\sum_{\Edg}\E_\mu\left(|\lambda_k-\alpha_k|^2\right).
\end{equation}
{F}rom the condition (\ref{eqn:alphaGamma}) and the large deviation estimate
 (\ref{eqn:largDev1}), the above probability is exponentially small. Moreover,
 the above $L^2$ moments are $\OO(1/N)$
by the spectral gap inequality for $\mu$, see e.g. equation (\ref{eqn:initialconcentration}).
 Hence the edge term goes to 0 uniformly.

 Using the accuracy  at scale $N^{-1+a}$ and
the concentration at scale $N^{-1+a/2}$ (Proposition \ref{prop:concentration}),
the second term in (\ref{eqn:EdgExtInt})
is bounded,
up to constants, for some $c_1,c_2,\delta>0$ by
\begin{multline*}
\frac{1}{N^2}\E\left(\left|\sum_{k\geq N^{a+\epsilon}}\frac{\lambda_{i_0+k}-\alpha_{i_0+k}}
{(\frac{k}{N})^2}\right|^2\right)+c_1 e^{-c_2 N^\delta}\\
\leq
N^2\E\left(\sum_{k\geq N^{a+\epsilon}}
\frac{|\lambda_{i_0+k}-\alpha_{i_0+k}|^2}{k^2}\right)\sum_{k\geq N^{a+\epsilon}}
\frac{1}{k^2}+c_1 e^{-c_2 N^\delta}
\leq
N^2N^{-2+a}(N^{-a})^2=N^{-a}.
\end{multline*}
In particular, it converges uniformly to 0.
For the third term, for some $c>0$ it is less than
\begin{multline*}
\frac{1}{N^2\eta^4}\E\left(\left(\sum_{Int}|\lambda_k-\alpha_k|\right)^2\right)
\leq
\frac{c}{N^2\eta^4}N^{a+\epsilon}\E\left(\sum_{Int}|\lambda_k-\alpha_k|^2\right)\\
\leq c\frac{(N^{a+\epsilon})^2}{N^2\eta^4}N^{-2+a}=c\frac{N^{3a+2\epsilon}}{N^4\eta^4}.
\end{multline*}
This goes to 0 if $\eta\gg N^{-1+\frac{3a}{4}+\frac{\epsilon}{2}}$, concluding the proof.
\hfill\qed

\section{Local equilibrium measure}\label{sec:loceq}

\subsection{Construction of the local measure}

Let $0<\kappa<1/2$. Choose $q\in [\kappa, 1-\kappa]$ and set $L=[Nq]$ (integer part).
Fix an integer $K$ with $K\le (N-L)/2$, in fact we will always assume
that $K$ depends on $N$ as $K=N^k$ with $k<1$.
 We will study the local spacing statistics
of $K$ consecutive particles
$$
  \{ \lambda_j\; : \; j\in I\}, \qquad I=I_L:=
 \llbracket L+1, L+K \rrbracket.
$$
These particles are typically located near
$E_q$ determined by the relation
$$
  \int_{-\infty}^{E_q} \rho(t) \rd t = q.
$$
Note that $|\gamma_L- E_q|\le C/N$.

We will distinguish the inside and outside particles
by renaming them as
\be\label{35}
(\lambda_1, \lambda_2, \ldots,
\lambda_N):=(y_{1}, \ldots y_{L}, x_{L+1},  \ldots x_{L+K}, y_{L+K+1},
  \ldots y_{N}) \in \Xi^{(N)},
\ee
but note that they keep their original indices.
The notation $\Xi^{(N)}$ refers to the simplex
$\{\bz \; :\; z_1<z_2< \ldots < z_N\}$ in $\RR^N$.
In short we will write
$$
\bx=( x_{L+1},  \ldots x_{L+K} ), \qquad \mbox{and}\qquad
 \by=
 (y_{1}, \ldots y_{L}, y_{L+K+1},
  \ldots y_{N}),
$$
all  in increasing order, i.e. $\bx\in \Xi^{(K)}$ and
$\by \in \Xi^{(N-K)}$.
We will refer to the $y$'s as the  external
points and to the $x$'s as  internal points.

We will fix the external points (often called
as boundary conditions) and study
the conditional measures on the internal points.
We consider the parameters $L$ and $K$ fixed and we
will not indicate them in the notation.
We first define  the
{\it local equilibrium measure} on $\bx$ with boundary condition  $\by$ by
\begin{equation}\label{eq:muyde}
 \quad
\mu_{\by} (\rd\bx)  = u_\by(\bx) \rd \bx, \qquad
u_\by(\bx):=  u (\by, \bx) \left [ \int u (\by, \bx) \rd \bx \right ]^{-1},
\end{equation}
where $u$ is the density of $\mu_V$.
Note that for any fixed $\by\in \Xi^{(N-K)}$,  $x_j$
lies in the interval $[y_{L}, y_{L+K+1}]$.

Given the classical locations, $\gamma=\{\gamma_1,\gamma_2, \ldots , \gamma_N\}$
with respect to the $\mu$-measure, we define the {\it relaxation measure}
 $\mu_N^{\tau, \gamma} =\mu^\tau$  by
\be\label{232}
\rd\mu^\tau :=\frac{Z}{ Z_{\mu^\tau}}e^{-N Q^\tau }\rd\mu,
 \quad Q^\tau (\bx) =    \sum_{j\in I}
Q_j^\tau(x_j)     , \qquad Q_j^\tau (x) = \frac{1}{2 \tau } (   x -   \gamma_j)^2,
\ee
where $Z_\mu$ is chosen such that $\mu$ is a probability measure.
Here $0< \tau < 1$ is a parameter which may even  depend
on $\by$, i.e., $\tau=\tau(\by)$ is allowed. Note that an artificial
quadratic confinement has been added to the equilibrium measure.
We define {\it the local relaxation measure}
$\mu_{\by}^\tau$ to be conditional measure of $\mu^\tau$.

Define the  Dyson Brownian motion
 reversible with respect to $\mu_{\by}^\tau$,
by the Dirichlet form
\be
D_{\mu_{\by}^\tau}(f) =
  \sum_{i\in I} \frac{1}{2N}
\int (\partial_i f)^2 \rd \mu_{\by}^\tau ,
\label{def:dirmu}
\ee
where $\partial_i=\partial_{x_i}$.
The Hamiltonian $\cH_\by^\tau$ of the measure $\mu_\by^\tau (\rd \bx) \sim \exp(-N\cH_\by^\tau)$
 is given by
\be\label{24}
\cH_{\by}^\tau (\bx) =
 \sum_{i\in I}  \frac{\beta}{2}V_\by (x_i)
-  \frac{ \beta }{N} \sum_{i,j\in I\atop i< j}
\log |x_{j} - x_{i}|  +    \sum_{i \in I} Q_i^\tau  (x_i ),
\ee
\be
V_\by (x) = V(x) - \frac{ 1}{ N} \sum_{j \not \in I}
\log |x - y_{j}|.
\ee

We now define the set of {\it good boundary configurations}
with a parameter $\e_0>0$ and a parameter $\delta=\delta(N)>0$ that in the applications
may depend on $N$:
\begin{align}\label{goodset}
  \cG_{\delta,\e_0}=\cG:=
\Big\{  & \by \in \Xi^{(N-K)}\; :\; |y_k-\gamma_k|\le \delta, \; \forall \, k\in
\llbracket N\kappa/2, L\rrbracket \cup \llbracket L+K+1,
N(1-\kappa/2)\rrbracket,  \\
& \mbox{and}\;\;  |y_k-\gamma_k|\le 1, \; \forall \, k\in \llbracket 1, N\rrbracket,  \nonumber \\
& \mbox{and}\;\; \E_{\mu_\by} (x_j-\gamma_j)^2\le \delta^2 \;\;\mbox{for all}
\;\; j\in \llbracket L+1, L+K \rrbracket \nonumber\\
& \mbox{and}\;\; y_L-y_{L-1}\ge \exp(-N^{\e_0}), \;\;\; y_{L+K+2}-y_{L+K+1}\ge \exp(-N^{\e_0})
\Big\} .\nonumber
\end{align}
First we show that the good configurations have overwhelmingly large probability
\begin{lemma}
For any $\e_0>0$ and  for any choice $\delta= N^{-d}$
with $d\in (1-k,1)$, there is an $\e'>0$  depending on $d$  such that
\be
   \P_\mu (\cG^c)\le Ce^{-cN^{\e'}} + Ce^{-cN^{\e_0}} .
\label{goodsetprob}
\ee
\end{lemma}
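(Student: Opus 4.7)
I would decompose $\cG^c$ into the union of the four events corresponding to failures of the four lines of \eqref{goodset} and estimate each by a union bound. Items~1, 2 and~4 reduce directly to previously established inputs; item~3 is the main new point and requires a high-moment Markov argument to convert an unconditional moment bound on $\mu$ into a conditional bound on $\mu_\by$.

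For item~1, since $L\in\llbracket\kappa N,(1-\kappa)N\rrbracket$ and $K=N^k$ with $k<1$, both ranges $\llbracket N\kappa/2, L\rrbracket$ and $\llbracket L+K+1, N(1-\kappa/2)\rrbracket$ lie inside $\llbracket N\kappa/2, N(1-\kappa/2)\rrbracket$. Theorem~\ref{thm:accuracy} with $\alpha=\kappa/2$ and any fixed $\epsilon_1\in(0,1-d)$ yields $\P_\mu(|y_k-\gamma_k|>N^{-1+\epsilon_1})\le c_1 e^{-c_2 N^{\delta_1}}$ for every such $k$, where $\delta_1>0$ is the exponent provided by that theorem. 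Since $N^{-1+\epsilon_1}<N^{-d}=\delta$, a union bound over $O(N)$ indices yields a bound of the form $Ce^{-cN^{\delta_1}/2}$, matching the desired $Ce^{-cN^{\e'}}$ with $\e'=\delta_1/2$. Item~2 is exactly the crude large deviation estimate \eqref{eqn:largDev1}.

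For item~3 fix $j\in I$ and an integer $p\ge 1$, and combine Markov's inequality with Jensen's inequality (convexity of $t\mapsto t^p$):
\[
\P_\mu\bigl(\E_{\mu_\by}(x_j-\gamma_j)^2>\delta^2\bigr)\le\delta^{-2p}\,\E_\mu\bigl[(\E_{\mu_\by}(x_j-\gamma_j)^2)^p\bigr]\le\delta^{-2p}\,\E_\mu(x_j-\gamma_j)^{2p}.
\]
Slicing the range of $|x_j-\gamma_j|$ at $N^{-1+\epsilon_1}$ (Theorem~\ref{thm:accuracy}), then at a large constant $R$ (by \eqref{eqn:largDev1}), and finally at large scales (Gaussian tail from \eqref{eqn:initialconcentration}) produces
\[
\E_\mu(x_j-\gamma_j)^{2p}\le 2(N^{-1+\epsilon_1})^{2p}+CR^{2p}e^{-cN^{\delta_1}}.
\]
With $\epsilon_1=(1-d)/2$ this gives $\delta^{-2p}\E_\mu(x_j-\gamma_j)^{2p}\le 2N^{-(1-d)p}+C(RN^{d})^{2p}e^{-cN^{\delta_1}}$. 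Choosing $p=\lfloor N^{\e'}\rfloor$ with $\e'\in(0,\delta_1)$ small enough forces both terms to be $\le Ce^{-cN^{\e'}}$; a union bound over $j\in I$ costs only the polynomial factor $K=N^k$, which is absorbed.

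Item~4 follows from the logarithmic repulsion in \eqref{eqn:measure}. Conditioning on all $\lambda_k$ with $k\ne L$, the conditional density of $\lambda_L$ on $(\lambda_{L-1},\lambda_{L+1})$ carries the Vandermonde factor $|\lambda_L-\lambda_{L-1}|^\beta$, so integrating the gap $\lambda_L-\lambda_{L-1}<\eta$ produces $\int_0^\eta s^\beta\,\rd s=\eta^{\beta+1}/(\beta+1)$ times bounded factors on the large-probability event of \eqref{eqn:largDev1}, and after a lower bound (of order one on the same event) on the local normalising integral. Hence $\P_\mu(\lambda_L-\lambda_{L-1}<\eta)\le C\eta^{\beta+1}$, and $\eta=e^{-N^{\e_0}}$ gives $Ce^{-cN^{\e_0}}$; the right boundary gap is symmetric. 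The main obstacle is item~3: translating the unconditional moment bound under $\mu$ into a high-probability bound on the conditional second moment under $\mu_\by$ forces a careful balance of $p$, $\e'$, $\epsilon_1$ and $\delta_1$ in order to extract subexponential (rather than merely polynomial) decay.
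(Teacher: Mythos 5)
Your treatment of conditions 1 and 2 matches the paper exactly. For condition 3 you take a genuinely different route: the paper first observes that on the event where conditions 1--2 already hold one has the deterministic bound $|x_j-\gamma_j|\le 1$, so that $\E_{\mu_\by}(x_j-\gamma_j)^2 \le \delta^2/4 + \P_{\mu_\by}\{|x_j-\gamma_j|\ge\delta/2\}$, and then applies Markov at $p=1$ to the random variable $\P_{\mu_\by}\{|x_j-\gamma_j|\ge\delta/2\}$. Your high-moment Markov plus Jensen argument, which bounds $\P_\mu(\E_{\mu_\by}(x_j-\gamma_j)^2>\delta^2)\le\delta^{-2p}\E_\mu(x_j-\gamma_j)^{2p}$ and then takes $p\sim N^{\e'}$, is valid (provided you also verify that the Gaussian tail term you allude to beyond scale $R$ is absorbed, which it is), but it is more work than the paper's $p=1$ shortcut; the paper's approach buys simplicity by exploiting that the other conditions of $\cG$ have already been restricted.

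Condition 4 is where there is a genuine gap. You claim that the local normalizing integral $\int_{\lambda_{L-1}}^{\lambda_{L+1}}\prod_{k\ne L}|s-\lambda_k|^\beta e^{-N\beta V(s)/2}\,\rd s$ admits a lower bound ``of order one on the same event'' of \eqref{eqn:largDev1}. That is not correct. After factoring out the Vandermonde factor $|s-\lambda_{L-1}|^\beta|s-\lambda_{L+1}|^\beta$ and treating the remaining factors as approximately constant, the normalizer scales like $\Delta^{2\beta+1}$ with $\Delta:=\lambda_{L+1}-\lambda_{L-1}$, and what you get is $\P(\lambda_L-\lambda_{L-1}<\eta\mid\lambda_k,k\ne L)\lesssim(\eta/\Delta)^{\beta+1}$, not $C\eta^{\beta+1}$. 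The event in \eqref{eqn:largDev1} only constrains $|\lambda_k-\gamma_k|\le\epsilon$ and therefore gives an upper bound on $\Delta$ of order $1/N$; it gives no lower bound, so $\Delta$ can a priori be arbitrarily small and the argument becomes circular (the double gap being small is again a small-gap event). The paper avoids this by a scaling (dilation) argument: centering so that $y_L=-y_{L+K+1}=-a$, rescaling $\bx\to(1-\varphi)\bw$, and comparing normalization constants on the event $\Omega=\{y_{L+K+1}-y_L\le 2a\}$ (an \emph{upper} bound on the window, which Theorem~\ref{thm:accuracy} gives cheaply). This yields $\P_{\mu_\by}(x_{L+1}-y_L\le N^{-2}r)\le C_Vr$ uniformly in the internal configuration, with no lower bound on any gap required, and from there the subexponential estimate follows by choosing $r$ appropriately. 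Your repulsion heuristic points in the right direction but, as stated, does not close.
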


\begin{proof}
We have proved  in Theorem~\ref{thm:accuracy} that for any choice $\delta= N^{-d}$
with $d\in (0,1)$ the probability that the first condition
in \eqref{goodset} is violated is bounded by $C\exp(-cN^{\e'})$
with some $\e'>0$ depending on $d$.
Similarly, the second condition is violated
with an analogous very small probability by \eqref{eqn:largDev1}.
To check the probability to violate the third requirement in the definition
of $\cG$, we use that
\begin{align}
  \P_\mu \Big\{ \E_{\mu_\by} (x_j-\gamma_j)^2 \ge\delta^2\Big\}
 & \le  \P_\mu \Big\{ \P_{\mu_\by} \{ |x_j-\gamma_j| \ge\delta/2\}\ge 3\delta^2/4
\Big\}+C\exp(-cN^{\e'})\nonumber\\
 & \le  C\delta^{-2} \E_\mu \P_{\mu_\by} \big\{ |x_j-\gamma_j|\ge \delta/2\big\}+C\exp(-cN^{\e'})
 \nonumber\\
& \le   C\delta^{-2} \P_\mu\big\{ |x_j-\gamma_j|\ge \delta/2\big\}
 \le c_1 e^{-c_2 N^{\e'}},
\end{align}
since for $\by$ satisfying the first two conditions of \eqref{goodset} we have
$$
  \E_{\mu_\by} (x_j-\gamma_j)^2 \le \delta^2/4 +
\P_{\mu_\by} \{ |x_j-\gamma_j| \ge\delta/2\}
$$
as $x_j-\gamma_j\le y_{L+K+1} - \gamma_j \le \delta +\gamma_{L+K+1}-\gamma_1\le 1$
and also a similar lower bound holds.

Finally, we show that
$$
  \P_\mu\big( y_L-y_{L-1}\le  \exp(-N^{\e_0})\big)\le  Ce^{-cN^{\e_0}},
$$
and a similar bound holds for the other condition in the fourth
line of \eqref{goodset}. For simplicity of the presentation
and to avoid introducing new notations, we will actually prove
$$
  \P_\mu\big( y_{L+1}-y_L\le  \exp(-N^{\e_0})\big)\le  Ce^{-cN^{\e_0}}
$$
from which the previous inequality follows just by shifting the
indices. With the events
$$
  A:= \big\{ y_{L+1}-y_L\le  \exp(-N^{\e_0})\big\}, \qquad
\Omega:=\big\{ y_{L+K+1}-y_L\le  2a\big\},
$$
we write
\be
  \P_\mu (A) = \E_\mu \big[{\bf 1}(\Omega)\P_{\mu_\by}(A)\big] + \P_\mu(\Omega^c).
\label{decc}
\ee
Choosing $a=C_0 K/N$ with a sufficiently large fixed constant $C_0$
   Theorem~\ref{thm:accuracy} and $\delta \ll K/N$
guarantee that  $\P_\mu(\Omega^c)$ is subexponentially small.

We will prove that
\be\label{tailprob1}
\P_{\mu_\by}  ( x_{L+1} - y_L \le  N^{-2} r ) \le C_Vr
\ee
for any $r\in (0,1)$. The constant depends on $V$, more precisely
\be
  C_V= C + C\sup\big\{ |V'(x)|\; :\; x\in [y_L, y_{L+K+1}]\big\}.
\label{CVdef}
\ee
{F}rom \eqref{tailprob1} the necessary subexponential
estimate on the first term in \eqref{decc} follows by choosing $r=N^{-2}\exp(-N^{\e_0})$.

To prove \eqref{tailprob1},
on the set $\Omega$ we can shift the measure such that $y_L=-y_{L+K+1}$
and denote $a:=-y_L$.
Then we have
\begin{align}
 \int\ldots\int_{-a+ a \varphi }^{a- a \varphi}  & \rd \bx
 \prod_{i,j\in I\atop i < j} (x_i-x_j)^\beta
e^{- N\frac{\beta}{2} \sum_j V_\by (x_j)  } \nonumber \\
 & = (1-\varphi)^{ K+\beta K(K-1)/2} \int\ldots\int_{-a }^{a}   \rd \bw
\prod_{i < j} (w_i-w_j)^\beta e^{- N \frac{\beta}{2}\sum_j V_\by ((1-\varphi) w_j)},
\nonumber
\end{align}
where we set $w_j:=(1-\varphi)^{-1}x_{L+j}$, $\rd \bx =   \rd x_{L+1} \ldots \rd x_{L+K}$
and $\rd\bw =  \rd w_1 \ldots \rd w_K$.
By definition,
\begin{align}
e^{- N \frac{\beta}{2}  V_\by ((1-\varphi) w_j) }
  & = e^{- N \frac{\beta}{2}  V ((1-\varphi) w_j)}
  \prod_{i \le L} ( (1-\varphi) w_j - y_i)^\beta
 \prod_{i \ge L+K+1} ( y_i-(1-\varphi) w_j)^\beta
\nonumber \\
& \ge e^{- N\frac{\beta}{2}  V ( w_j)
- C_V\varphi N  }
(1 - \varphi)^N \prod_{i \le L} ( w_j - y_i)^\beta  \prod_{i \ge L+K+1} ( y_i- w_j)^\beta  .
\nonumber
\end{align}
Note that we only used that $V$ is a $C^1$-function with bounded derivative
in performing a Taylor expansion and using that $w_j\le a$ is finite.
Hence
$$
\frac{1}{Z}\int\ldots \int_{-a+ a \varphi }^{a- a \varphi}
  \rd \bx \prod_{i,j\in I\atop i < j} (x_i-x_j)^\beta e^{- N\frac{\beta}{2}\sum_j V_\by (x_j)}
  \ge (1-\varphi)^{N K+CK^2} e^{- C_V NK \varphi }
$$
with
$$
   Z := \int_{-a }^{a}   \rd \bw \prod_{i,j\in I\atop i < j} (w_i-w_j)^\beta e^{- N
 \frac{\beta}{2}\sum_j V_\by ( w_j)}.
$$
Therefore the $\mu_\by$-probability of $y_{L+1} -y_L = x_{L+1}-y_L\ge a(1-\varphi)$
can be estimated by
$$
\P_{\mu_\by}  ( x_{L+1}  \ge -a+ \varphi a )\ge  (1-\varphi)^{N K+CK^2} e^{- C_V NK \varphi }
\ge 1-  (C_V +C)NK\varphi
$$
by using $K\le N$. Choosing $\varphi=N^{-2}r/a$ and recalling
that $a\sim K/N$, we arrive at \eqref{tailprob1}.
\end{proof}

\begin{proposition}\label{prop:mumu} Let $\varphi>0$ be fixed.
For any smooth, compactly supported
function $G:\RR\to \RR$ we have
\be
\lim_{N\to\infty}\Bigg|\E_\mu  \big [\E_{ \mu_{\by}} -\E_{ \mu_{\by}^\tau}  \big  ]
\frac 1 K \sum_{i \in I} G\Big( N(x_i-x_{i+1}) \Big)\Bigg| =0,
\ee
provided
\be
\frac{1}{2}\hat\tau\le \tau(\by)\le 2\hat \tau
\qquad \mbox{for any $\by\in \cG$}
\label{taubound}
\ee
holds  for the function $\tau=\tau(\by)$
with some constant $\hat\tau=\hat \tau_N$ such that
\be\label{cond1}
\frac{N \delta^2}{ \hat\tau }  \le N^{-\varphi}.
\ee
\end{proposition}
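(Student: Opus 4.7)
The plan is to condition on the external points $\by$ and compare the two internal measures $\mu_\by$ and $\mu_\by^\tau$ via relative entropy, exploiting the fact that their Radon--Nikodym derivative is explicit in terms of $Q^\tau$. Since $G$ is compactly supported the observable
\[
F(\bx) \;=\; \frac{1}{K}\sum_{i\in I} G\bigl(N(x_i-x_{i+1})\bigr)
\]
is uniformly bounded by $\|G\|_\infty$, independently of $\by$. This boundedness, combined with the subexponential estimate \eqref{goodsetprob}, lets us restrict the outer $\E_\mu$ to $\by\in\cG$; the contribution from $\cG^c$ is absorbed into a $Ce^{-cN^{\e'}}$ error.

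For $\by\in\cG$, the definitions \eqref{eq:muyde} and \eqref{232} give
\[
\frac{d\mu_\by^\tau}{d\mu_\by}(\bx) \;=\; \frac{e^{-NQ^\tau(\bx)}}{Z_\by},\qquad Z_\by:=\E_{\mu_\by}\, e^{-NQ^\tau}.
\]
Taking logarithms and integrating against $\mu_\by^\tau$ yields $S(\mu_\by^\tau\mid\mu_\by)= -N\E_{\mu_\by^\tau}Q^\tau-\log Z_\by$. Jensen's inequality gives $-\log Z_\by\le N\,\E_{\mu_\by}Q^\tau$, and since $Q^\tau\ge 0$ we obtain
\[
S(\mu_\by^\tau\mid\mu_\by)\;\le\; N\,\E_{\mu_\by}Q^\tau \;=\; \frac{N}{2\tau(\by)}\sum_{j\in I}\E_{\mu_\by}(x_j-\gamma_j)^2.
\]
The third defining property of $\cG$ bounds $\E_{\mu_\by}(x_j-\gamma_j)^2\le\delta^2$ for every $j\in I$, and combining this with \eqref{taubound} and the hypothesis \eqref{cond1} yields $S(\mu_\by^\tau\mid\mu_\by)\le C\,K\,N^{-\varphi}$.

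Applying Pinsker's inequality and the uniform bound on $F$,
\[
\bigl|\E_{\mu_\by}F-\E_{\mu_\by^\tau}F\bigr|\;\le\; 2\|G\|_\infty\sqrt{S(\mu_\by^\tau\mid\mu_\by)/2}\;\le\; C\|G\|_\infty\sqrt{K\,N^{-\varphi}},
\]
uniformly in $\by\in\cG$. Integrating with respect to $\mu$ and adding the negligible contribution from $\cG^c$ gives the desired limit, provided $K=N^k$ with $k<\varphi$. The main subtlety is precisely this constraint: the entropy method loses a factor $K$ because $F$ is a sum of $K$ bounded terms normalized by $1/K$, so we need $\varphi$ chosen larger than the exponent $k$ when invoking the proposition. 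In the applications this is arranged by tuning $\delta$ and $\hat\tau$ in \eqref{cond1} so that $\varphi>k$; if one needed $k\ge\varphi$, the Pinsker step would have to be replaced by a finer Dirichlet-form comparison that exploits the extra convexity of order $N/\tau$ that $Q^\tau$ contributes to the Hessian of $\mu_\by^\tau$, but for the statement as written the entropy/TV route suffices.
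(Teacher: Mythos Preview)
Your entropy/Pinsker argument is correct as far as it goes, but it does not prove the proposition as stated: you obtain the bound $C\sqrt{KN^{-\varphi}}$, which tends to zero only when $k<\varphi$, and you explicitly add this as an extra hypothesis. The proposition, however, carries no such restriction, and in the paper's sole application (Theorem~\ref{thm:mi}) the parameters are $k=\tfrac{39}{2}\varphi$, so $k>\varphi$ always. Thus your proof would render the proposition useless exactly where it is invoked. Your final sentence concedes this and suggests that a Dirichlet-form argument would be needed for $k\ge\varphi$; that is precisely the situation here, so the concession is fatal rather than cosmetic.

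The paper avoids this loss of a factor $K$ by replacing Pinsker with the local relaxation flow estimate \eqref{relax} (from \cite{ESYY}, Theorem~4.3), which bounds the difference of gap observables by $C\bigl(\tfrac{\tau N^{\varphi/2}}{K}D(\mu_\by\mid\mu_\by^\tau)\bigr)^{1/2}$ plus a subexponentially small entropy term. The point is that $D(\mu_\by\mid\mu_\by^\tau)=\tfrac{N}{2\tau^2}\sum_{j\in I}\E_{\mu_\by}(x_j-\gamma_j)^2$, so $\tfrac{\tau}{K}D\le \tfrac{N\delta^2}{2\tau}\le CN^{-\varphi}$ \emph{independently of $K$}; the final bound is $CN^{-\varphi/4}$ with no constraint linking $k$ and $\varphi$. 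The Dirichlet-form inequality exploits both the specific structure of the gap observable (a sum of $K$ local terms) and the $\tau^{-1}$ convexity contributed by $Q^\tau$, which together recover the missing factor $K$ that Pinsker throws away.
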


We remark that, with  a slight abuse of notation, the last term, $i=L+K$ in
the sum involving the non-existing $x_{i+1}=x_{L+K+1}$ is defined to be zero.
We also point out that
the notation $\E_\mu \E_{ \mu_{\by}}$ means that
 the law of $\by$ is given by $\mu$ in
 the first expectation and we are using the measure $\mu_\by$ in the second
one. Of course, we have   $\E_\mu=\E_\mu \E_{ \mu_{\by}}$.

\medskip

\begin{proof}
For any configuration $\by$, any $\tau$ (may depend on $\by)$  and
 for any smooth function  $G$ with compact support, we have
\be\label{relax}
   \Bigg| \big [\E_{ \mu_{\by}} -\E_{ \mu_{\by}^\tau}  \big  ]
\frac 1 K \sum_{i \in I} G\Big( N(x_i-x_{i+1}) \Big)\Bigg|
\le C \Big(  \frac {\tau N^{\varphi/2}}{ K}
 D \big  (\mu_{\by} | \mu_{\by}^\tau   \big  ) \,    \Big)^{1/2}
+ Ce^{-cN^{\e/2}} \sqrt{S (\mu_{\by} | \mu_{\by}^\tau   \big  )},
\ee
Here we also  introduced  the notations
\be\label{def:Dirform}
  D(\mu\mid \om) : = \frac{1}{2N}\int
\Big|\nabla \log \Big(\frac{\rd\mu}{\rd\om}\Big)\Big|^2 \rd \mu =
 \frac{1}{2N} \int \Big|\nabla \sqrt{ \frac{\rd\mu}{\rd\om}}\Big|^2\rd \om
\ee
and
$$
  S(\mu\mid \om): = \int \log \Big(\frac{\rd \mu}{\rd\om}\Big) \rd \mu
$$
for any probability measures $\mu, \om$. The estimate \eqref{relax} follows
from our the local relaxation to equilibrium argument
that in this form first appeared in Theorem 4.3 of \cite{ESYY}.
We will neglect the exponentially small entropy term since
it can be estimated by the Dirichlet form, i.e. by the first term
as long as $\tau\ge N^{-C}$.

We thus obtain
\be
\Bigg|\E_\mu  \big [\E_{ \mu_{\by}} -\E_{ \mu_{\by}^\tau}  \big  ]
\frac 1 K \sum_{i \in I} G\Big( N(x_i-x_{i+1}) \Big)\Bigg|
\le
C   \Big(  \frac {N^{\varphi/2}}{ K}  \E_\mu  \big[ {\bf 1}_\cG\;
 \tau(\by) D \big  (\mu_{\by} | \mu_{\by}^\tau   \big  )\big] \,    \Big)^{1/2}
 + Ce^{-cN^{\e'}}.
\label{EE}
\ee
To obtain the estimate \eqref{EE} we separated good and bad configurations;
we used \eqref{relax} for $\by\in \cG$. On the complement $\cG^c$
we just used the trivial estimate on $G$, and this yields
the subexponentially small second term.

Assuming \eqref{taubound},
we have
\be \frac {1}{ K}  \E_\mu  \big[ {\bf 1}_\cG\;
 \tau(\by) D \big  (\mu_{\by} | \mu_{\by}^\tau   \big  )\big] \,
 \le \frac{N}{K}\E^\mu\Big[  {\bf 1}_\cG\;
 \frac{1}{  \tau(\by) } \sum_{j \in I }  (   x_j -   \gamma_j)^2\Big]
\le \frac{N\delta^2}{\hat \tau}
\le N^{-\varphi}
\ee
by \eqref{cond1}.
Inserting this estimate into \eqref{EE}
we completed the proof of the proposition.
\end{proof}

\subsection{Matching the boundary conditions}\label{sec:match}

Suppose we have measures  $\g $ and $ \mu$ with potentials $W$ and $V$
given by \eqref{eqn:measure} with
 densities $\rho=\rho_V$ and $\rho_W$, respectively.
For our purpose $W(x)=x^2$,  i.e, $\g$ is the Gaussian $\beta$-ensemble
and $\rho_W(t) =\frac{1}{2\pi}\sqrt{[4-t^2]_+}$ is the Wigner semicircle law.
Let  the sequence $\gamma_j$ be the classical location for $\mu$ and
the sequence $\theta_j$ be the classical locations for $\g$.

We will match the boundary conditions for the local
measure on $J_\by:=[y_{L}, y_{L+K+1}]$ around  $E_q$
with those of the $\g$ measure.
For definiteness we choose the interval $J'=[\theta_{L'}, \theta_{L'+K+1}]$
with $L' = \frac{1}{2}(N -  K -1)$ as our reference interval. Note
that $J'$ is  symmetric to the origin.
The local density $\rho_V(E_q)$ at the point $E_q$ we
look at may be different from the density $\rho_W(0)$ at
the origin. Thus the typical length of $J_\by$, which is
$\gamma_{L+K+1} - \gamma_L \sim [\rho_V(E_q)]^{-1} N^{-1}$, may
not be close to the length of $J'$ which is very close to
$[\rho_W(0)]^{-1} N^{-1} = \pi N^{-1}$, so we will have
to rescale the $\g$ measure by a factor
$$
  s_q    \approx \frac{\rho_V(E_q)}{\rho_W(0)}.
$$
In fact, we need to match not only the interval of classical locations
$\gamma$ with $J'$, but the exact interval $I_\by$. This requires
a $\by$-dependendent scaling factor $s=s(\by)$.

{F}rom now on we assume that $\by$ is a good boundary condition
with a parameter $\delta$ that satisfies
\be\label{dNK}
\frac {\delta N } K \to 0.
\ee
We can shift the coordinates
so that
\be\label{t}
- y_L = y_{L+K+1}.
\ee
 Since our observable is translationally invariant, we will not track
the translation and we assume that \eqref{t} holds.
We define
\be
  s(\by):= \frac{\th_{L'}}{y_L}= \frac{\th_{L'+K+1}}{y_{L+K+1}}, \qquad
  s_q: = \frac{\th_{L'}}{\gamma_L}.
\label{sdef}
\ee
We have
\be\label{s-1}
|s(\by)-  s_q |  =\Big| \frac { \th_{L'}} { y_L}-\frac { \th_{L'}} { \gamma_L}\Big|
 \le C \frac { \delta N } K \to 0
\ee
since
\be
  \theta_{L'}\approx -[\varrho_W(0)]^{-1} \frac{K}{2N}, \qquad
  \gamma_L \approx -[\varrho_V(E_q)]^{-1} \frac{K}{2N}, \qquad
  y_L \approx -[\varrho_V(E_q)]^{-1} \frac{K}{2N}, \qquad
\label{appr}
\ee
by using $\by\in \cG$ and \eqref{dNK}. Similar formulas hold
for $\th_{L'+K+1}$, $\gamma_{L+K+1}$ and $y_{L+K+1}$ at
the upper edge of the interval. Here the $A\approx B$
is understood in the sense that the approximation error
at most of order $(K/N)^2$, recalling that $K=o(N)$.

For simplicity of the presentation, we can first shift the original $\mu$-ensemble
such that $E_q=0$. Second, we can perform
 an initial rescaling of the Gaussian $\beta$-ensemble so that $s_q=1$.

\begin{lemma} Assuming $E_q=0$, $s_q=1$, we have
\be\label{38}
 |\gamma_{L+j}-\th_{L'+j}|\le C\frac{j^2}{N^2} + C\delta, \qquad |j|\le \frac{1}{100} N\kappa.
\ee
\end{lemma}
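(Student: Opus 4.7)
The plan is to Taylor-expand the CDF relations defining $\gamma_{L+j}$ and $\theta_{L'+j}$ while carefully using the two normalizations $E_q=0$ and $s_q=1$. Throughout, $\rho_W$ denotes the density of the (already rescaled) Gaussian $\beta$-ensemble, and $\rho_{sc}$ the original semicircle density.

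First, $s_q=\theta_{L'}/\gamma_L=1$ immediately gives $\gamma_L=\theta_{L'}$, so the $j=0$ term in the difference vanishes. Subtracting the CDF relations $\int_{-\infty}^{\gamma_{L+j}}\rho_V=(L+j)/N$ and $\int_{-\infty}^{\theta_{L'+j}}\rho_W=(L'+j)/N$ and using $\gamma_L=\theta_{L'}$ yields
\[
\int_{\theta_{L'+j}}^{\gamma_{L+j}}\rho_V(t)\,dt \;=\; \int_{\gamma_L}^{\theta_{L'+j}}\bigl(\rho_W(t)-\rho_V(t)\bigr)\,dt.
\]
Because $q\in[\kappa,1-\kappa]$ and $|j|\le N\kappa/100$ keep the relevant interval well inside the bulk, $\rho_V\ge c>0$ there, and a one-sided Taylor expansion gives $|\theta_{L'+j}-\gamma_L|=O(|j|/N)$. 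Therefore
\[
|\gamma_{L+j}-\theta_{L'+j}|\;\le\;\frac{C|j|}{N}\,\max_{t\in[\gamma_L,\theta_{L'+j}]}|\rho_V(t)-\rho_W(t)|.
\]

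The main task is the density mismatch at the base point. Using $E_q=0$ in the original coordinates and $L=[Nq]$, the pre-shift classical location $\gamma_L^{(0)}$ lies within $C/N$ of $E_q=0$; covariance of the density under the translation (t) then gives $\rho_V(\gamma_L)=\rho_V(E_q)+O(1/N)$. For the rescaled semicircle the rescaling formula yields $\rho_W(\gamma_L)=s_q^{\mathrm{old}}\rho_{sc}(\theta_{L'}^{(0)})$, where $s_q^{\mathrm{old}}$ is the value of $s_q$ before the normalization. The approximations (appr) give $s_q^{\mathrm{old}}=\pi\rho_V(E_q)+O(K/N)$; crucially, the evenness of $\rho_{sc}$ at $0$ forces $\rho_{sc}(\theta_{L'}^{(0)})=1/\pi+O((K/N)^2)$ (using $\theta_{L'}^{(0)}\approx-\pi K/(2N)$ and $\rho_{sc}'(0)=0$). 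Multiplying, $\rho_W(\gamma_L)=\rho_V(E_q)+O(K/N)$, so $|\rho_V(\gamma_L)-\rho_W(\gamma_L)|=O(K/N)$. Since the derivatives of $\rho_V$ and $\rho_W$ are bounded on the bulk interval of length $O(|j|/N)$, this propagates to $\max_t|\rho_V(t)-\rho_W(t)|\le C(K/N+|j|/N)$.

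Combining the two estimates yields $|\gamma_{L+j}-\theta_{L'+j}|\le C(|j|K/N^2+j^2/N^2)$. For $|j|\le K$ the first term is at most $(K/N)^2$, which is bounded by $C\delta$ under the regime of the paper ($k\le 1/2$ combined with $\delta=N^{-d}$, $d<1$, gives $\delta\ge(K/N)^2$); for $|j|>K$ the first term is dominated by $j^2/N^2$. Either way $|\gamma_{L+j}-\theta_{L'+j}|\le Cj^2/N^2+C\delta$. The hard part will be the density-mismatch computation: the semicircle's evenness at $0$ is precisely what reduces the naive $O(1)$ mismatch to $O(K/N)$ (without it, the linear term in the Taylor expansion of $\rho_{sc}$ would spoil the bound), and the parameter constraints of the lemma must then be leveraged to absorb the resulting $(K/N)^2$ contribution into the $C\delta$ term.
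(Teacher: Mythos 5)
Your overall strategy --- Taylor-expanding the CDF relations so that the leading $j/N$ terms cancel and the difference is controlled by the density mismatch at the base point plus a quadratic term --- is essentially the paper's own method. The structural estimate $|\gamma_{L+j}-\theta_{L'+j}|\lesssim \frac{|j|}{N}\,\max|\rho_V-\rho_W| + \frac{j^2}{N^2}$ is correct, and the reduction to a density mismatch at $\gamma_L$ is the right idea.

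The gap is in the mismatch estimate. You claim $|\rho_V(\gamma_L)-\rho_W(\gamma_L)|=O(K/N)$, but the correct bound in the paper's setup (its equation \eqref{VW0}) is $O(K/N+\delta N/K)$, and the extra $\delta N/K$ term does not come for free. The point is that $s_q=1$, i.e.\ $\theta_{L'}=\gamma_L$, is imposed \emph{after} the shift \eqref{t} which centers the configuration interval $[y_L,y_{L+K+1}]$, not the classical interval $[\gamma_L,\gamma_{L+K+1}]$. Since $\by\in\cG$ gives only $|y_L-\gamma_L|\le\delta$, the number $\gamma_L$ in these coordinates equals $-K/(2N\rho_V(E_q))$ only up to an error $O(\delta + K^2/N^2)$, not $O(K^2/N^2)$ as you read from \eqref{appr}; when $\varphi<1/38$ one has $\delta>(K/N)^2$, so the $\delta$ actually dominates. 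This $O(\delta)$ error propagates to an $O(\delta N/K)$ relative error in $s_q^{\mathrm{old}}$ and hence into $\rho_W(\gamma_L)$. A secondary point: the evenness of $\rho_{sc}$ which you flag as crucial is not what saves the bound; even with $\rho_{sc}'(0)\neq 0$ the product $s_q^{\mathrm{old}}\rho_{sc}(\theta^{(0)}_{L'})$ would still give $O(K/N)$, while the $\delta N/K$ term coming from the $\gamma$-side is what you are actually missing.

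Fortunately the gap is harmless for the lemma as used. Carrying the corrected mismatch $O(K/N+\delta N/K)$ through your argument produces an extra term $C|j|\delta/K$, which is $\le C\delta$ for $|j|\le K$, and the paper only invokes \eqref{38} with $|j|\lesssim K$ (for indices in $I$ and within $O(B)\ll K$ of its endpoints). For $|j|$ of order $N\kappa$ the bound $Cj^2/N^2$ is $O(1)$ and the inequality is vacuous. So the conclusion is not threatened, but you should carry the $\delta N/K$ term explicitly and verify its absorption rather than drop it at the start. The paper itself avoids the issue by working directly with the interval lengths $-2y_L$ and $-2\theta_{L'}$ and deriving \eqref{VW0} before dividing; this is cleaner than trying to sharpen the estimate via evenness.
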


\begin{proof}
The classical locations  $\gamma_{L+j}$ and $\th_{L'+j}$ are given by the equation
\be\label{jN}
\int_{\gamma_L}^{\gamma_{L+j}} \rho_V(x)  = \frac {  j} N,
\qquad \int_{\th_{L'}}^{\th_{L'+j}} \rho_W(x)  = \frac {  j} N.
\ee
We will use the approximations
\be\label{ap}
\rho_V(x) = \rho_V(0)  +   O( x), \qquad \rho_W(x) = \rho_W(0)  +   O( x)
\ee
for small $x$ (to stay away from the spectral edge).
Since $ |y_j - \gamma_j |\le \delta$, we have
\be
\frac {  K+1} N =  \int_{\gamma_L}^{\gamma_{L+K+1}} \rho_V(x)  \rd x
 = \int_{y_L}^{y_{L+K+1}} \rho_V(x)  \rd x + O(\delta)
 =  \rho_V(0)  (y_{L+K+1}-y_L) +  O \left ( \frac {K^2} { N^2} \right ) + O(\delta).
\ee
Similarly,
\be
\frac {  K+1} N =  \int_{\th_{     L'}}^{\th_{     L'+K+1}} \rho_W (x)  \rd x
  =  \rho_W(0)  (\th_{     L'+K+1} - \th_{L'} ) +  O \left ( \frac {K^2} { N^2} \right ).
\ee
Since $- y_L = y_{L+K+1}=- \th_{     L'}/s  = \th_{     L'+ K + 1}/s$
which is comparable with $K/N$ by \eqref{appr},
and since $|s-  1|    \le C \frac { \delta N } K$  from \eqref{s-1}, we have
\be\label{VW0}
| \rho_W(0) - \rho_V(0)  | \le \frac { C K } N + \frac {C\delta N } K.
\ee
{F}rom \eqref{jN}, \eqref{ap} and \eqref{appr} we get
$$
  \varrho_V(0)(\gamma_{L+j}-\gamma_L) + O\Big(\frac{j^2}{N^2}\Big)
  = \frac{j}{N} =  \varrho_W(0)(\th_{L'+j}-\th_{L'}) + O\Big(\frac{j^2}{N^2}\Big),
$$
which combining with \eqref{VW0} and $\rho_W(0)\ge c$ gives
$$
    \gamma_{L+j}-\gamma_L
  = \th_{L'+j}-\th_{L'} + O\Big(\frac{j^2}{N^2}\Big) + O(\delta).
$$
Since $\gamma_L=\th_{L'}$, this completes the proof of the lemma.
\end{proof}

\newcommand{\bt}{\mbox{\boldmath $\theta$}}
\newcommand{\htau}{{\hat\tau}}

\subsection{Rescaling of the reference problem}

Throughout this section we fix  a good boundary configuration.
 $\by\in \cG$ and a number $\tau(\by)$ depending on
this configuration and satisfying \eqref{taubound}.
 We will approximate the local relaxation measure $\mu_\by^\tau$
on $[y_L, y_{L+K+1}]$ by a fixed reference measure.

Given the collection of classical locations $\th_j$ corresponding
to the Gaussian potential $W(x)=x^2$  we define a {\it reference
local relaxation measure} $\g_\th^\htau$ via the  Hamiltonian
\be\label{241}
\cH_{\th}^\htau (\bx) =
 \sum_{i \in I'}  \Big [  \frac{\beta}{2}   x_i^2   - \frac{ \beta }{N} \sum_{j \not \in I'}
\log |x_i - \th_{j} | \Big ]
-  \frac{ \beta }{N} \sum_{ i,j \in I'\atop i<j }
\log |x_{j} - x_{i}|  +   \frac{1}{2 \htau } \sum_{ i \in I'} (   x_i -   \theta_i )^2,
\ee
on the set $[\th_{L'}, \th_{L'+K+1}]$
where  $I':= \llbracket L'+1, L'+K\rrbracket$.
Note that if  $\g$ is the equilibrium measure given by \eqref{eqn:measure} corresponding to $W$
and $\g^\htau$ denotes the corresponding relaxation measure
\be\label{232sigma}
\rd\g^\htau :=\frac{Z}{ Z_{\g^\htau}}e^{-N Q^\htau }\rd\g,
 \quad Q^\htau (\bx) =    \sum_{j\in I'}
Q_j^\htau(x_j)     , \qquad Q_j^\htau (x) = \frac{1}{2 \htau } (   x -   \th_j)^2,
\ee
defined analogously to \eqref{232}, then $\g_\th^\htau$ is the conditional
measure of $\g^\htau$ under the condition that the outside points
are exactly at their classical locations, i.e. $\lambda_j=\th_j$, $j\not\in I'$.

We make three simplifications in the presentation. First, as already in
Section~\ref{sec:match}, we assume that both the  configuration
space $[y_L, y_{L+K+1}]$ for the original measure $\mu_\by^\htau$
and the configuration space $[\theta_{L'}, \theta_{L'+K+1}]$ of the reference
measure $\g_\th^\htau$ are symmetric around the origin; this can be achieved
by an irrelevant shift. Second, we assumed $s_q=1$, which can be achieved
by an irrelevant rescaling of $W$.
Finally, we will set $L'=L$.
This last assumption expresses an irrelevant shift in the labelling
of one of the ensembles. Strictly speaking, shifting would mean that
the original set of particles indices $\llbracket 1, N\rrbracket$ gets shifted as well.
However, in our argument this shift does not play any active role;
the only information we use about the set of indices is that $L$ is
macroscopically separated from its boundary and that its cardinality is $N$.

We now rescale the measure $\g_\theta^\htau$ from $[\theta_{L}, \theta_{L+K+1}]
 =[\theta_{L}, -\theta_{L}] $ to $[y_L, y_{L+K+1}]= [y_{L}, -y_{L}]$
by the factor $s=s(\by)$ defined in \eqref{sdef} (note that $y_L, \th_L<0$).
With the rescaled  boundary conditions
$\theta_j\to\th_j':= \theta_j/s$,
we define the {\it reference local relaxation measure}, or {\it reference measure}
in short, to be
\be\label{def:ref}
   \g_{\th}^{\htau, s}: =\frac { 1 } { Z^{\htau,  \th, s } }
 e^{-N\cH_{\th}^{\htau, s} (\bx)}\rd \bx,
\ee
 a measure on the set
$[y_L, y_{L+K+1}]$
with Hamiltonian
\be\label{242}
\cH_{\th}^{\htau, s} (\bx) =
 \sum_{i \in I}  \Big [  \frac {   \beta s^2  x_i^2 } 2  - \frac{ \beta}{N} \sum_{j \not \in I}
\log |  x_i - \th_{j}/s | \Big ]
-  \frac{ \beta }{N} \sum_{ i,j \in I\atop i<j }
\log | x_{j} -   x_{i}|  +    \frac{s^2}{2 \htau } \sum_{ i \in I} (    x_i -   \theta_i/s )^2.
\ee
The rescaled potential associated with this Hamiltonian is $ W_s(x)= s^2 x^2$.

\newcommand{\non}{\nonumber}

For any smooth  function $G$ with compact support, we have
\be\label{diff}
\E_{ \g_{\th}^{\htau, s} }    \frac 1 K \sum_{i \in I} G\Big( N(x_i-x_{i+1}) \Big)
=  \frac { 1 } { Z^{\htau,  \th, s } }
\int_{\th_L/s}^{- \th_L/s}  \rd \bx \; e^{- N \cH_{\th}^{\htau,s}
(\bx)} \; \frac 1 K \sum_{i \in I} G\Big( N(x_i-x_{i+1}) \Big),
\ee
where $\int_a^b \rd \bx$ stands for the $K$-dimensional integral
$\int_{[a,b]^K}\rd x_{L+1}\ldots  \rd x_{L+K}$ and $ Z^{\htau,  \th, s }$ is the normalization
factor.
Let $x_j = w_j/s$, then the right side becomes
\begin{align}
 \frac { 1 } { Z^{\htau,  \th} } \int_{\th_L }^{- \th_L}
 \rd\bw \; e^{- N \cH_{\th}^{\htau} (\bw)}
\frac 1 K \sum_{i \in I} G\Big( \frac { N(w_i-w_{i+1})} s  \Big)
& = \E_{ \sigma_{\th}^\htau }  \frac 1 K \sum_{i \in I}  G\Big( \frac{ N  (x_i-x_{i+1})}{s}
 \Big )\label{44} \\
 \non
& = \E_{ \sigma_{\th}^\htau}  \frac 1 K \sum_{i \in I}  G\Big( N (x_i-x_{i+1}) \Big)  + o(1),
\end{align}
where we renamed the $w$-variables to $x$-variables in the first step and
in the second step we have used that
\[
\Big| G\Big( N (x_i-x_{i+1})/s \Big) - G\Big( N  (x_i-x_{i+1}) \Big)  \Big|\le
 C|1-s| \| G'\|_\infty
\]
by Taylor expansion and from the fact that $G$ is compactly supported.
Clearly, the difference vanishes as long as $s \to 1$.
Thus we are free to scale the measure with factor converging to  1.
The condition $s\to 1$ will be guaranteed by \eqref{s-1}.

Our main result is the following theorem.

\begin{theorem}\label{thm:mi} Let $0<\varphi\le \frac{1}{38}$.
 Fix $K=N^k$, $\delta = N^{-d}$, $\hat \tau = N^{-t}$
with $d=1-\varphi$, $t=2d-1-\varphi=1-3\varphi$ and $k=\frac{39}{2}\varphi$,
in particular  such that \eqref{cond1},
\eqref{dNK} are satisfied.
Then
\be\label{381}
\Bigg| \E_\mu \E_{ \mu_{\by}^{\htau} } \frac 1 K \sum_{i \in I} G\Big( N(x_i-x_{i+1}) \Big)
   -\E_{ \g_{\th}^{\htau}}
 \frac 1 K \sum_{i \in I} G\Big( N(x_i-x_{i+1}) \Big)\Bigg|
\to 0
\ee
as $N \to \infty$
for any smooth and compactly supported test function $G$.
 Here the law of $\by$ is given by   $\mu$ in the expectation.

\end{theorem}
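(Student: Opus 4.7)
\medskip
\noindent\textbf{Proof proposal for Theorem \ref{thm:mi}.}

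The plan is to compare $\mu_\by^\tau$ and $\g_\theta^\tau$ by interposing the rescaled reference measure $\g_\theta^{\tau,s}$ of \eqref{def:ref}, which is defined on the same interval $[y_L,-y_L]$ as $\mu_\by^\tau$. The first step is the rescaling reduction: by \eqref{44}, replacing $\g_\theta^\tau$ by $\g_\theta^{\tau,s}$ with $s=s(\by)$ defined in \eqref{sdef} changes the expectation only by $O(|s-1|\,\|G'\|_\infty)$. By \eqref{s-1} and the choice of parameters ($\delta N/K = N^{-37\varphi/2}\to 0$), we have $s\to 1$ uniformly over $\by\in\cG$, so this replacement is $o(1)$. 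Thus it suffices to control
$$
\E_\mu \mathbf{1}_\cG \bigl|[\E_{\mu_\by^\tau}-\E_{\g_\theta^{\tau,s}}] \tfrac{1}{K}\textstyle\sum_{i\in I}G(N(x_i-x_{i+1}))\bigr|,
$$
together with the contribution of $\cG^c$, which is subexponentially small by \eqref{goodsetprob}.

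The second step is to apply the local relaxation inequality (Theorem 4.3 of \cite{ESYY}, used in the proof of Proposition \ref{prop:mumu}) to the pair $(\mu_\by^\tau,\g_\theta^{\tau,s})$. Both measures are log-concave with Hessian bounded below by $c\hat\tau^{-1}\Id$ on their common domain $[y_L,-y_L]$: for $\mu_\by^\tau$ this comes from convexity of $V$, of the logarithmic interaction, and of $Q^\tau$; for $\g_\theta^{\tau,s}$ it comes from the rescaled Gaussian potential and the analogous confinement. Hence both satisfy a log-Sobolev inequality with constant $O(\hat\tau)$, yielding
$$
\bigl|[\E_{\mu_\by^\tau}-\E_{\g_\theta^{\tau,s}}]\tfrac{1}{K}\textstyle\sum_{i\in I}G(N(x_i-x_{i+1}))\bigr|\le C\Bigl(\frac{\hat\tau N^{\varphi/2}}{K}D(\mu_\by^\tau\,|\,\g_\theta^{\tau,s})\Bigr)^{1/2}+\text{(subexp)}.
$$
By Cauchy--Schwarz it thus remains to show that
$$
\E_\mu\bigl[\mathbf{1}_\cG\,D(\mu_\by^\tau\,|\,\g_\theta^{\tau,s})\bigr]\ll \frac{K}{\hat\tau N^{\varphi/2}}.
$$

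The core step is the Dirichlet-form computation. Writing $\log(d\mu_\by^\tau/d\g_\theta^{\tau,s})=-N(\cH_\by^\tau-\cH_\theta^{\tau,s})+\text{const}$, we have $D(\mu_\by^\tau|\g_\theta^{\tau,s})=\tfrac{N}{2}\int|\nabla(\cH_\by^\tau-\cH_\theta^{\tau,s})|^2 d\mu_\by^\tau$. The gradient decomposes into three contributions, matching \eqref{24}--\eqref{242}. (a) The \emph{bulk potential mismatch} $\partial_i[\tfrac{\beta}{2}V(x_i)-\tfrac{\beta s^2}{2}x_i^2]$: after the shift $E_q=0$ the equilibrium relation \eqref{equilibrium} together with the normalization $s_q=1$ makes the first two Taylor coefficients agree, leaving an $O(|x_i|^2)$ pointwise error with $|x_i|\lesssim K/N$. (b) The \emph{boundary log mismatch} $\tfrac{\beta}{N}\sum_{j\notin I}\bigl[(x_i-y_j)^{-1}-(x_i-\theta_j/s)^{-1}\bigr]$: using $|y_j-\theta_j/s|\le|y_j-\gamma_j|+|\gamma_j-\theta_j/s|\le \delta+C(j-L)^2/N^2$ from $\by\in\cG$ and \eqref{38}, this is bounded by $O(N\delta/K)$ after summation thanks to the $(i-j)^{-2}$ decay from the typical spacing. (c) The \emph{confinement mismatch} $\hat\tau^{-1}[(x_i-\gamma_i)-s^2(x_i-\theta_i/s)]$ is dominated by $\hat\tau^{-1}(|s-1|\,|x_i|+|\gamma_i-\theta_i/s|)=O(\delta/\hat\tau+(K/N)^2/\hat\tau)$. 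Squaring, summing over $i\in I$ and integrating, the third term dominates and gives $D\lesssim N\cdot K\cdot(\delta^2/\hat\tau^2+(K/N)^4/\hat\tau^2)+\text{(lower order)}$. With $\delta=N^{-1+\varphi}$, $\hat\tau=N^{-1+3\varphi}$, $K=N^{39\varphi/2}$, one checks that $\hat\tau N^{\varphi/2}D/K=N^{-c\varphi}\to 0$.

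The hardest part will be the cancellation in (a): showing that, to the accuracy required, the potential $V$ looks quadratic in the local window around $E_q$, which forces us to exploit the equilibrium equation \eqref{equilibrium} to cancel the linear and constant pieces of $V'$. The bookkeeping for (b) is also delicate because the boundary sum runs over all $j\notin I$, and the estimate must genuinely use the decay of the pairwise factor $(i-j)^{-2}$ together with the two different scales of control on $|y_j-\theta_j/s|$ (namely $\delta$ in the bulk versus the weaker \eqref{38} bound near the edges). Once these gradient terms are bounded, taking expectation under $\mu_\by^\tau$ and then outer expectation under $\mu\,\mathbf{1}_\cG$ replaces $(x_i-\gamma_i)^2$ by its moment $O(\delta^2)$ from $\cG$, yielding the desired estimate and completing the proof.
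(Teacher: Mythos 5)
Your overall architecture --- rescale $\g_\theta^{\hat\tau}$ to $\g_\theta^{\hat\tau,s}$, reduce to a Dirichlet form comparison, and then estimate $\nabla(\cH_\by^\tau-\cH_\theta^{\hat\tau,s})$ term by term --- is aligned with the paper's strategy (Proposition~\ref{prop:musig} and Section~\ref{sec:conv}). However, there is a genuine gap in step (b), and it is precisely the obstacle the paper devotes Section~5.1--5.2 to resolve: you compare $\mu_\by^\tau$ directly to $\g_\theta^{\hat\tau,s}$ without first \emph{regularizing the boundary conditions}.

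The problem is that $\by\in\cG$ only gives $|y_j-\gamma_j|\le\delta$, which is position accuracy at scale $\delta=N^{-1+\varphi}\gg N^{-1}$; it gives no lower bound on the spacings $y_{j}-y_{j-1}$. As many as $N\delta\gg 1$ of the $y_j$ with $j<L$ can pile up within distance $\delta$ of $y_L$, while $x_i$ for $i$ near $L+1$ can also sit within distance $\to 0$ of $y_L$. In that event the sum
$\frac{1}{N}\sum_{j<L}\bigl[(x_i-y_j)^{-1}-(x_i-\theta_j')^{-1}\bigr]$
is not controlled by any ``$(i-j)^{-2}$ decay from the typical spacing'' --- there is no typical spacing for the $y_j$. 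The quantity $|x_i-y_j|$ has no uniform lower bound of order $|i-j|/N$, so your claimed bound $O(N\delta/K)$ on the boundary log mismatch does not follow. The paper handles this by passing to $\mu_\by^{B,\tau}$ in~\eqref{def:regloc}, which replaces the $\sim B=N^b$ boundary points nearest each edge by the rigid lattice $\theta'_k$, controls the entropy cost of this replacement (Lemma~\ref{lm:SS}, Proposition~\ref{prop:spacingB}), and only then bounds the Dirichlet form $D(\mu_\by^{B,\tau}\mid\g_\theta^{\hat\tau,s})$ in Lemma~\ref{eb}. The extra parameter $B$ is what produces terms like $\delta^2N^3/(BK)$ in~\eqref{largebound}, and without it the corresponding contributions blow up.

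Two further, smaller issues. First, your bound $|\gamma_j-\theta_j/s|\le C(j-L)^2/N^2$ from~\eqref{38} holds only for $|j-L|\le\frac{1}{100}N\kappa$; for the spectral-edge indices the definition of $\cG$ gives only $|y_k-\gamma_k|\le 1$, and the paper instead appeals to the edge rigidity of Lemma~\ref{edge} to get the term $N^{3/5+\varphi}$ in \eqref{largebound}. Your ``weaker \eqref{38} bound near the edges'' remark conflates $|\gamma_{L+j}-\theta_{L'+j}|$ with $|y_k-\gamma_k|$ near the spectral edge; these are different quantities, controlled by different estimates. Second, the direct estimate on $D(\mu_\by^\tau\mid\g_\theta^{\hat\tau,s})$ also requires controlling $|\log(x_{L+1}-y_L)|$-type moments (cf.~Lemma~\ref{lm:rep}) before any of the $\log$ factors in the $\Omega^4$ analysis can be integrated; this is another place where the boundary regularity enters and where a naive pointwise bound breaks down.
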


\begin{proof}
{F}rom the rescaling estimates, \eqref{diff}-\eqref{44},
it suffices to prove that
\be\label{372}
\E_\mu  \big [\E_{ \mu_{\by}^{\htau} } -\E_{ \g_{\th}^{\htau, s(\by)}}  \big  ]
\frac 1 K \sum_{i \in I} G\Big( N(x_i-x_{i+1}) \Big)
\to 0
\ee
as $N \to \infty$. Notice that after the rescaling
both measures $\mu_{\by}^{\htau}$ and $ \g_{\th}^{\htau, s(\by)}$
live on the same interval $[y_L, y_{L+K+1}]$.
In Proposition~\ref{prop:mumu} we already showed that
\be\label{371}
\E_\mu  \big [\E_{ \mu_{\by}^{\htau} } -\E_{ \mu_{\by}^{\htau/s(\by)^2} }
  \big  ]
 \frac 1 K \sum_{i \in I} G\Big( N(x_i-x_{i+1}) \Big)
\to 0,
\ee
since \eqref{s-1}   with $s_q=1$ and
 $\delta N/K\to 0$
guarantee that $\tau(\by):= \htau/s(\by)^2$ satisfies \eqref{taubound}.
Thus the limit \eqref{372} will follow
from the following Proposition that we will prove
in Sections~\ref{sec:conv}:
\begin{proposition}\label{prop:musig}
Under the assumptions of Theorem~\ref{thm:mi}, we have
\be\label{37}
\E_\mu  \big [\E_{ \mu_{\by}^{\htau/s(\by)^2} }
 -\E_{ \g_{\th}^{\htau, s(\by)}}  \big  ]
 \frac 1 K \sum_{i \in I} G\Big( N(x_i-x_{i+1}) \Big)
\to 0.
\ee
\end{proposition}
This  completes the proof of Theorem~\ref{thm:mi}.
\end{proof}

\bigskip

{\bf Proof of Theorem~\ref{thm:Main}.}
Finally, combining Theorem~\ref{thm:mi} with
Proposition~\ref{prop:mumu} and noticing that \eqref{cond1}
is satisfied since $t=2d-1-\varphi$, we have
\be\label{3811}
\Bigg| \E_\mu  \frac 1 K \sum_{i \in I} G\Big( N(x_i-x_{i+1}) \Big)
   -\E_{ \g_{\th}^{\htau}}
 \frac 1 K \sum_{i \in I} G\Big( N(x_i-x_{i+1}) \Big)\Bigg|
\to 0
\ee
as $N \to \infty$. This holds for $K=N^k$ with any $0<k\le\frac{1}{2}$
by selecting a suitable $\varphi$ in Theorem~\ref{thm:mi}.
However, the measure $\sigma_\theta^{\htau}$ is independent of $V$,
the only information we used was that the local density matches.
So we obtain that any two measures $\mu_{\beta, V}$ and $\mu_{\beta, W}$
have the same local gap statistics assuming that the local
densities of the two ensembles coincide.
\qed

\section{Comparison with the reference problem}\label{sec:conv}

In this section we prove Proposition~\ref{prop:musig}.
On the set $\by\in \cG^c$ with subexponentially small
probabality \eqref{goodsetprob} a trivial estimate on $G$
suffices.  For the sequel we therefore assume that $\by\in \cG$ and
we set $\tau(\by):= \htau/s(\by)^2$ which clearly satisfies \eqref{taubound}.
In the first step we will soften the boundary condition $\by$
for the measure local relaxation measure $\mu_\by^\tau$.

\subsection{Regularizing the boundary conditions}

We know that the boundary condition
 $\by\in \cG$ is regularly spaced on the scale
$\delta= N^{-d}$, but it does not exclude that $N\delta = N^{1-d}\gg 1$
points of the colletion $\by$ pile up near the edges
of the interval
$[y_L, y_{L+K+1}]$. This would substantially influence
the local relaxation measure $\mu_\by^\tau$ near the corresponding edge
inside $[y_L, y_{L+K+1}]$. We therefore first replace the
boundary conditions near the edges by the regularly spaced
ones given by $\theta'=\theta/s(\by)$. This change will be controlled only
in the entropy sense. The local relaxation measure with regularized
boundary conditions will then be compared with
the reference measure in the stronger Dirichlet form sense.

Set a parameter
\be\label{def:B}
  B = N^b \quad \mbox{with} \quad  1+\varphi -d \le b < k,
\ee
in particular $\delta N\ll B\ll K$.
Given a boundary condition $\by\in\cG$,
 we define a new boundary condition $\by^B = \{ y_i^B\; : \; i\not\in I\}$ as
\be
 y^B_i: = \left\{
\begin{array}{lll}
 \max \{ \th_i', y_{L-4B}\} & \mbox{for} &  L - 4B \le i \le L \cr
   y_i  & \mbox{for} &  i < L -4B, \quad \mbox{or}\quad i> L+K+4B\cr
   \min\{ \th_i', y_{L+K+4B}\}  & \mbox{for} &  L +K+1 \le i \le L+K+4B,
\end{array}\right.
\ee
i.e., we replace at most $4B$ boundary
conditions $y_i$ with the rescaled classical ones $\th_i'=\th_i/s(\by)$
near the edges of the interval $[y_L,y_{L+K+1}]=[\theta_L', \theta_{L+K+1}']$.
Note that the  configuration space is unchanged.
We have
$$
   y_{L-4B}\le\gamma_{L-4B}+ \delta \le \theta'_{L-4B} + CB^2N^{-2}+C\delta \le \theta'_{L-2B},
$$
where we used that $\by\in \cG$ in the first step and \eqref{38} in the second.
In the last inequality we used that $\theta'_{L-2B} - \theta'_{L-4B}\ge cBN^{-1}$
(by regular spacing) and the definition of $B$ from \eqref{def:B}.
Thus we obtain
\be\label{ybth}
y^B_i =   \th'_{  i},  \quad   L - 2B    \le i \le L,
\ee
and similarly at the upper edge.
In other words, we do replace at least $2B$ boundary condition points near the edges with
the classical ones. Although it may happen that a few $y_i^B$ pile up,
but this occurs away from the edges.
The key property of the family $y_i^B$ is the following bound
\be
\#\{ i\; : \; y_i^B\in J \} \le CN|J|
\label{ybregular}
\ee
for any interval $J$ such that $|J|\ge cN^{-1}$
and $c|J|\le \mbox{dist} (J, [y_L, y_{L+K+1}])\le |J|/c$
with some small constant $c$.

\medskip

Consider the {\it regularized local relaxation measure}, which is defined as
the probability measure
\be\label{def:regloc}
\mu^{B,\tau}_\by(\rd \bx) = Z^{-1}e^{- N \cH_\by^{B,\tau}}\rd\bx
\ee
of $K$ ordered points $\bx=(x_{L+1}, \ldots , x_{L+K})$
in $[y_L, y_{L+K+1}]$, with Hamiltonian
\be
\cH_{\by}^{B,\tau} (\bx) :=
 \sum_{i\in I}   \frac{\beta}{2}V^i_{\by^B} (x_i)
-  \frac{ \beta }{N} \sum_{i,j\in I\atop i< j}
\log |x_{j} - x_{i}|  +       \sum_{i \in I} Q_i^\tau  (x_i ),
\ee
with a quadratic confinement $Q_i^\tau(x) = (2\tau(\by))^{-1} (x-\theta'_i)^2$
as in \eqref{242} and $\tau(\by)= \hat \tau/s(\by)^2$.
The potential $V^i$ is given
by
$$
V^i_{\by^B} (x) = V(x) - \frac{ 2 }{ N} \sum_{ j \le  L}
\log |x  - y_{j}^B| - \frac{ 2 }{ N} \sum_{ j \ge  L+K+1}
\log |x  - y_{j}|
 \qquad \mbox{for}\quad L +1\le i \le L + 4B
$$
$$
V^i_{\by^B} (x) = V(x) - \frac{ 2 }{ N} \sum_{ j \le  L}
\log |x  - y_{j}| - \frac{ 2 }{ N} \sum_{ j \ge  L+K+1}
\log |x  - y_{j}|
 \qquad \mbox{for}\quad L +4B+1\le i \le L +K -4B
$$
and
$$
V^i_{\by^B} (x) = V(x) - \frac{ 2 }{ N} \sum_{ j \le  L}
\log |x  - y_{j}| - \frac{ 2 }{ N} \sum_{ j \ge  L+K+1}
\log |x  - y_{j}^B|
 \qquad \mbox{for}\quad L+K-4B+1 < i \le L +K .
$$
In other words, we replace the  boundary condition  $\by $ with
 $\by^B$ for the points  $x_i$ with $L+1\le i \le L + 4B$
at the lower edge and similarly for the other edge.
The boundary conditions for
the middle points $x_i$ with $L+4B+1\le i\le L+K-4B$
remain unchanged. Recalling \eqref{24}, we have in particular
\be
\cH_{\by}^{B,\tau} (\bx)-\cH_{\by}^{\tau} (\bx)
= \frac{2}{N} \sum_{L - 4B \le   j < L}
\sum_{L < i  \le L+4B }
\left [ -   \log  |x_i- y^B_{j} |  +  \log     |x_i- y_{j} |\right ]
+\big( \mbox{Upper edge}\big),
\ee
where {\it (Upper edge)} refers to an analogous term
collecting interactions
near the upper edge.

\medskip

\begin{lemma}\label{lm:SS} Let $\by\in \cG$.
 The relative entropies of the measures
 $\mu_\by^\tau$ and $\mu^{ B,\tau}_\by$ satisfy
\be \label{77}
S( \mu_\by^\tau | \mu^{ B,\tau}_\by   )+S( \mu_\by^{B,\tau} | \mu^{\tau}_\by   )
 \le C B^2 \log N.
\ee
\end{lemma}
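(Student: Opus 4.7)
I would start from the symmetric-entropy identity
\[
S(\mu\mid\nu)+S(\nu\mid\mu)=\int\log\frac{d\mu}{d\nu}\,(d\mu-d\nu),
\]
which, for $\mu=\mu_\by^\tau$ and $\nu=\mu_\by^{B,\tau}$, yields after cancellation of the partition-function constants against the mean-zero signed measure $d\mu-d\nu$,
\[
S(\mu_\by^\tau\mid\mu_\by^{B,\tau})+S(\mu_\by^{B,\tau}\mid\mu_\by^\tau)
= N\int(\cH_\by^{B,\tau}-\cH_\by^\tau)\,(d\mu_\by^\tau-d\mu_\by^{B,\tau}).
\]
Substituting the explicit expression for $\cH_\by^{B,\tau}-\cH_\by^\tau$ displayed just above the lemma, $N(\cH_\by^{B,\tau}-\cH_\by^\tau)$ is a double sum of $O(B^2)$ logarithmic terms of the form $c\bigl[\log|x_i-y_j|-\log|x_i-y_j^B|\bigr]$ with $(i,j)$ ranging over pairs within $4B$ indices of either edge of the internal interval. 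The lemma therefore reduces to showing that after integration against $d\mu_\by^\tau-d\mu_\by^{B,\tau}$ each such log-difference contributes at most $O(\log N)$.

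For each pair I would use the crude triangle bound $|\int f(d\mu-d\nu)|\le \int|f|\,d\mu+\int|f|\,d\nu$, reducing matters to per-point estimates of the form
\[
\int \bigl|\log|x_i-z|\bigr|\,d\mu \le C\log N
\qquad \text{for } z\in\{y_j, y_j^B\},\; \mu\in\{\mu_\by^\tau,\mu_\by^{B,\tau}\}.
\]
For pairs near the lower edge both $y_j$ and $y_j^B$ lie to the left of $[y_L,y_{L+K+1}]$, so $x_i-z\in(0,2K/N]\subset(0,1]$ and only integrability near $0$ is at issue. Under $\mu_\by^\tau$ the joint density carries a factor $|x_i-y_j|^\beta$, which after integrating out the other internal coordinates yields the marginal small-ball bound $\P(|x_i-y_j|<\epsilon)\le C\epsilon^{\beta+1}$; a layer-cake decomposition
\[
\int\bigl|\log|x_i-y_j|\bigr|\,d\mu_\by^\tau=\int_0^\infty\P(|x_i-y_j|<e^{-t})\,dt\le\log(N/K)+\int_{\log(N/K)}^\infty Ce^{-(\beta+1)t}\,dt = O(\log N)
\]
then gives the required bound. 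For the cross-integrals $\int|\log|x_i-y_j||\,d\mu_\by^{B,\tau}$ and the symmetric variants one uses that $|y_j-y_j^B|\le C(\delta+B^2/N^2)\ll 1$, so the vanishing of the $\mu_\by^{B,\tau}$-density at $y_j^B$ still controls it in an $N^{-1}$-neighborhood of $y_j$, and the same layer-cake argument closes the estimate (with an analogous role-swap for pairs near the upper edge).

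The principal obstacle will be securing the constant $C$ in the small-ball estimate uniformly in $(i,j)$ and in the configuration $\by\in\cG$: in principle $C$ depends on the spacing of $y_j$ (or $y_j^B$) from the neighbouring external and internal points, and the original boundary $\by$ allows $O(N\delta)$ points to cluster near the edge of the internal interval. This is precisely why the regularization $\by\mapsto\by^B$ was introduced: the regularity property \eqref{ybregular} of the new boundary produces at most $O(N|J|)$ regularized points in any small interval $J$ near the edge, and the clause $y_L-y_{L-1}\ge e^{-N^{\e_0}}$ in the definition of $\cG$ prevents pathological pile-up of the original boundary at the very edge. Once the per-point bound is established uniformly, multiplying the $O(B^2)$ pair count by the $O(\log N)$ per-pair contribution yields the claimed $CB^2\log N$ estimate.
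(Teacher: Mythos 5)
Your plan starts exactly where the paper does: the symmetric entropy identity you write is precisely the content of Lemma \ref{lm:HH}, and the reduction to a sum of $O(B^2)$ logarithmic pair terms (using that $\cH_\by^{B,\tau}-\cH_\by^\tau$ differs only near the edges) is the same decomposition the paper uses. So the high-level structure matches.

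Where the proposal breaks down is the per-pair estimate. You assert the marginal small-ball bound $\P_{\mu_\by^\tau}(|x_i-y_j|<\epsilon)\le C\epsilon^{\beta+1}$ with a constant $C$ uniform in $(i,j)$ and in $\by\in\cG$, justifying it by "integrating out the other internal coordinates." This is precisely the nontrivial content of Lemma \ref{lm:rep}, and it cannot be waved through: the marginal density of $x_{L+1}$ depends on the partition function over the whole $K$-particle box of length $\sim K/N$, so already in the one-particle toy model $\rho(x)\propto x^\beta$ on $[0,K/N]$ the constant is $(N/K)^{\beta+1}$, not uniform. When you integrate out the remaining $K-1$ particles, controlling the resulting constant requires a real argument. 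The paper handles this with a dilation/change-of-variables computation (the proof of \eqref{tailprob1}, adapted in Lemma \ref{lm:rep}), which yields the much weaker but still sufficient linear tail $\P_{\mu_\by^\tau}(x_{L+1}-y_L\le N^{-3}r)\le Cr$; this is enough for the layer-cake integral to give $O(\log N)$ once you split at $t=3\log N$ rather than at $\log(N/K)$. Your identification of this as "the principal obstacle" is correct, but the workaround you sketch — invoking \eqref{ybregular} and the $\cG$-clause $y_L-y_{L-1}\ge e^{-N^{\e_0}}$ — does not supply a proof: \eqref{ybregular} controls the \emph{number} of boundary points in an interval, not the marginal density of the interior point $x_{L+1}$, and the $e^{-N^{\e_0}}$ gap gives only the deterministic bound $|\log(x_{L+1}-y_L)|\le N^{\e_0}$, far weaker than $O(\log N)$. (In the paper that gap condition is used elsewhere, in the Dirichlet form estimate on the exceptional set $\Lambda^c$, e.g. \eqref{subb}, not in Lemma \ref{lm:SS}.)

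Two further simplifications you miss would streamline the argument considerably. First, the regularized boundary satisfies $y_j^B\le\theta_{L-1}'$ for all $j<L$ while $x_i\ge y_L=\theta_L'$, so $x_i-y_j^B\ge\theta_L'-\theta_{L-1}'\ge cN^{-1}$ deterministically; the corresponding log terms are therefore bounded by $C\log N$ with no probabilistic estimate at all. Second, since $y_j\le y_{L-1}<y_L\le x_{L+1}\le x_i$, all the remaining terms $|\log(x_i-y_j)|$ are dominated by the single quantity $|\log(x_{L+1}-y_L)|$, so only one marginal gap needs to be controlled probabilistically, and that gap sits at $y_L=y_L^B$ where the density under both $\mu_\by^\tau$ and $\mu_\by^{B,\tau}$ genuinely vanishes. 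Your attempt to control the cross-integral $\int|\log(x_i-y_j)|\,d\mu_\by^{B,\tau}$ by "the vanishing of the $\mu_\by^{B,\tau}$-density at $y_j^B$ ... in an $N^{-1}$-neighborhood of $y_j$" also does not go through, because $|y_j-y_j^B|$ is of order $\delta=N^{-d}\gg N^{-1}$, so $y_j$ need not lie in an $N^{-1}$-neighborhood of $y_j^B$ at all.

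In short: same reduction, but the crucial input — a uniformly controlled tail for $x_{L+1}-y_L$ under the conditional measures — is asserted rather than proven, and the scaling argument the paper uses to obtain it (Lemma \ref{lm:rep} via \eqref{tailprob1}) is the substantive missing idea.
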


\begin{proof} We start with the following lemma that estimates the relative entropy of
any two measures:

\begin{lemma}\label{lm:HH}
Suppose $\mu_i(\rd x)=Z_i^{-1}e^{-H_i}\rd x$, $i= 1, 2$ are probability measures
with Hamiltonians $H_i$ on a common measure space.
Then
\be\label{852}
S(\mu_1 | \mu_2)   \le  \E_{\mu_1} [ H_2 - H_1 ]  + \E_{\mu_2} [       H_1 - H_2 ].
\ee
We also have the inequality
\be
   \E_{\mu_2} [       H_2 - H_1 ] \le \log Z_1- \log Z_2
 \le  \E_{\mu_1} [       H_2 - H_1 ].
\ee
\end{lemma}

\begin{proof}
By Jensen inequality, we have
\begin{align*}
0\le S(\mu_1 | \mu_2)   & = \int \rd \mu_1 \log \left ( \frac {\rd \mu_1} {\rd \mu_2} \right )
=  \int \rd \mu_1   [    H_2 - H_1 ] +  \log \left ( \frac {Z_2} {Z_1} \right ) \\
&
\le  \E_{\mu_1} [ H_2 - H_1 ] - \log   \left [ \int e^{-H_1}   \frac { \rd x }
  {\int e^{-H_2} \rd x }  \right ] \\
&  \le  \E_{\mu_1} [       H_2 - H_1 ]  + \E_{\mu_2} [       H_1 - H_2 ].
\end{align*}
This completes the proof of Lemma~\ref{lm:HH}.
\end{proof}

\medskip

Hence we have $S( \mu_\by^\tau | \mu^{B,\tau}_\by   )  \le \beta\Omega_1 $, where
\begin{align}\label{e4}
\Omega_1  & :  =
\Big ( \E_{\mu^\tau_\by} -  \E_{ \mu^{ B,\tau}_\by} \Big )   \sum_{L - 4B \le   j < L}
\sum_{L < i  \le L+4B }
\left [ -   \log  ( x_i- y^B_{j} )  +  \log     ( x_i- y_{j} )\right ]
+\big(\mbox{Upper edge}\big). \nonumber   \\
\end{align}
Using that $x_i-y_j\ll 1$,
we clearly have
\begin{align}
\Omega_1 &   \le    - \sum_{L - 4B \le   j < L}\sum_{L < i  \le L+4B }
\left [ \E_{\mu^\tau_\by}     \log  ( x_i- y^B_{j} )  +   \E^{ \mu^{B,\tau}_\by}
   \log  ( x_i- y_{j} )  \right ]+\big(\mbox{Upper edge}\big)\nonumber  \\
 &   \le   C B^2 \log N  - B^2
   \E_{ \mu^{B,\tau}_\by}
   \log  ( x_{L+1}- y_L )+\big(\mbox{Upper edge}\big) . \label{Om1}
\end{align}
In the first term we used the trivial estimate
$x_i-y_j^B\ge \th_L' - \th_{L-1}' \ge cN^{-1}$
for any $j<L$.
The second term will be estimated by Lemma~\ref{lm:rep} below
and this completes the estimate for
$S( \mu_\by^\tau | \mu^{ B,\tau}_\by   )$.
The other relative entropy, $S( \mu_\by^{B,\tau} | \mu^{\tau}_\by   )$
can be treated similarly and this
proves Lemma~\ref{lm:SS}.
\end{proof}

\begin{lemma}\label{lm:rep}
Suppose $\tau \ge N^{-1}$, then for any $p\ge 1$ we have
\be\label{trivgap}
\E_{\mu_\by^\tau}  |\log( x_{L+1} - y_L)|^p\le C_p\log N
\ee
and the same estimate holds  w.r.t the measure $\mu_\by^{ B,\tau}$.
\end{lemma}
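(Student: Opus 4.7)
The plan is to reduce Lemma~\ref{lm:rep} to a polynomial tail estimate on the gap $x_{L+1}-y_L$ under $\mu_\by^\tau$ (and likewise under $\mu_\by^{B,\tau}$), analogous to \eqref{tailprob1}. Once we have
\be\label{plantail}
\P_{\mu_\by^\tau}\bigl(x_{L+1}-y_L\le N^{-2}r\bigr)\le Cr,\qquad r\in(0,1),
\ee
the claim \eqref{trivgap} follows by direct integration,
\[
\E_{\mu_\by^\tau}|\log(x_{L+1}-y_L)|^p=\int_0^\infty p\,t^{p-1}\P_{\mu_\by^\tau}\bigl(x_{L+1}-y_L<e^{-t}\bigr)\rd t,
\]
by splitting at $t=2\log N$, using $\P\le 1$ below and \eqref{plantail} with $r=N^2e^{-t}$ above. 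This gives a polylogarithmic bound in $N$ of order $(\log N)^p$; the constant $C_p$ in \eqref{trivgap} absorbs the additional powers of $\log$.

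To prove \eqref{plantail}, I would repeat the scaling argument that established \eqref{tailprob1} verbatim, now carrying along the extra Hamiltonian piece $\sum_{j\in I}Q_j^\tau(x_j)$. After shifting so that $y_L=-a=-y_{L+K+1}$ with $a=O(K/N)$ and substituting $x_j=(1-\varphi)w_j$, the old factors (the Jacobian $(1-\varphi)^{K+\beta K(K-1)/2}$, the external-Vandermonde factor $(1-\varphi)^{\beta(N-K)}$ coming from $(1-\varphi)w_j-y_i\ge(1-\varphi)(w_j-y_i)$ for $y_i\le-a$, and the Taylor bound $e^{-C_V NK\varphi}$) appear exactly as in the original proof. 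The one new ingredient is the elementary identity
\[
Q_j^\tau((1-\varphi)w_j)-Q_j^\tau(w_j)=-\frac{\varphi}{\tau}w_j(w_j-\gamma_j)+\frac{\varphi^2}{2\tau}w_j^2,
\]
which, combined with the crude bounds $|w_j|,|\gamma_j|\le 2a$ (valid since $\by\in\cG$ forces $|\gamma_j-y_j|\le\delta\ll a$), yields
\[
N\Bigl|\sum_{j\in I}\bigl[Q_j^\tau((1-\varphi)w_j)-Q_j^\tau(w_j)\bigr]\Bigr|\le\frac{CN\varphi a^2 K}{\tau}\le\frac{C\varphi K^3}{N\tau}.
\]

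The main obstacle is verifying that this extra contribution is absorbed into the existing $e^{-CNK\varphi}$. This reduces to the parameter balance $K^3/(N\tau)\le NK$, equivalently $K^2\le N^2\tau$, which holds under the hypothesis $\tau\ge N^{-1}$ together with the standing assumption $K\le N^{1/2}$ (since $k\le\tfrac{1}{2}$): indeed $K^2\le N\le N^2\tau$. The scaling argument then concludes exactly as in \eqref{tailprob1}, giving $\P_{\mu_\by^\tau}(x_{L+1}\ge -a+a\varphi)\ge 1-CNK\varphi$, which rescales to \eqref{plantail}. For the regularized measure $\mu_\by^{B,\tau}$ the argument goes through verbatim: by construction $y_L^B=y_L$ and $y_{L+K+1}^B=y_{L+K+1}$, so the configuration space is unchanged, every external point $y_j^B$ still lies in $(-\infty,-a]\cup[a,\infty)$ so the Vandermonde inequality remains valid, and the smoothness of $V$ (hence $C_V$) is unaffected.
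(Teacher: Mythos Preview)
Your approach is essentially the paper's: reduce to a tail bound on $x_{L+1}-y_L$ by rerunning the scaling argument behind \eqref{tailprob1}, then integrate. The paper handles the extra confinement differently, though. Rather than computing $Q_j^\tau((1-\varphi)w_j)-Q_j^\tau(w_j)$ and invoking $K\le N^{1/2}$, it simply absorbs $Q_j^\tau$ into the potential in the Taylor step: the derivative $(x-\gamma_j)/\tau$ is bounded by $C/\tau\le CN$ (using only $\tau\ge N^{-1}$), so the effective constant $C_V$ in \eqref{CVdef} deteriorates to $O(N)$. This costs one power of $N$ in the threshold, giving $\P_{\mu_\by^\tau}(x_{L+1}-y_L\le N^{-3}r)\le Cr$ instead of your $N^{-2}$, but it needs no hypothesis on $K$ beyond the standing $k<1$.

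Your sharper bookkeeping buys the $N^{-2}$ threshold back, but at the price of the side condition $K^2\le N^2\tau$. You justify this via $k\le\tfrac12$, which is the hypothesis of Theorem~\ref{thm:Main} and is (just) satisfied by the parameter choices in Theorem~\ref{thm:mi}, so in the paper's applications your argument is fine. Strictly speaking, though, the lemma is stated with only $\tau\ge N^{-1}$, so the paper's cruder route proves the lemma as written while yours proves a slightly weaker statement. Also note that your integration yields $C_p(\log N)^p$ rather than $C_p\log N$; the paper's computation gives the same $(\log N)^p$, and the stated $C_p\log N$ appears to be a harmless imprecision (every application only needs a polylogarithmic bound).
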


\begin{proof}
We will need that
\be\label{tailprob}
\P_{\mu_\by^\tau}  ( x_{L+1} - y_L \le  N^{-3} r ) \le Cr
\ee
for any $r\in (0,1)$. Then \eqref{trivgap} follows from integrating in $r$ from 0 to $1$
and treating the regime $ x_{L+1}-y_L\ge N^{-3}$ trivially
by using $ x_{L+1}-y_L\le y_{L+K+1}-y_L\le CK/N\le 1$.

The estimate \eqref{tailprob} can be proven  essentially
in the same way as \eqref{tailprob1}, just the potential $\frac{\beta}{2}V(x_j)$
of the $j$-th point
in that proof is replaced with $\frac{\beta}{2}V(x_j)+ Q^\tau_j(x_j)$.
The final estimate is somewhat weaker since now the bound on the constant $C_V$
defined in \eqref{CVdef} deteriorates to
$C_V\le C\tau^{-1}\le CN$. This accounts for the change from $N^{-2}$ to $N^{-3}$
in \eqref{tailprob}.
The argument for the measure $\mu_\by^{ B,\tau}$ is analogous and this
proves Lemma~\ref{lm:rep}.
\end{proof}

\subsection{Regularization does not change spacing statistics}

Given that the local relaxation measure $\mu_\by^\tau$ and its regularized
version $\mu^{ B,\tau}_\by$
are close in relative entropy sense, the next proposition shows that their
local spacing statistics coincide.

\begin{proposition}\label{prop:spacingB}
Let $\by\in \cG$, $\tau=\tau(\by)=\htau/s(\by)^2$ and assume
that for the parameters $B=N^{b}$, $K=N^k$ and $\htau = N^{-t}$
it holds
that
\be\label{cond2}
  1+ 2b -t-k< 0.
\ee
Then
\be\label{38B}
\Bigg| \big[ \E_{ \mu_{\by}^{\tau} } -\E_{ \mu_{\by}^{B, \tau}} \big]
 \frac 1 K \sum_{i \in I} G\Big( N(x_i-x_{i+1}) \Big)
\Bigg|
\to 0
\ee
as $N \to \infty$
for any smooth and compactly supported test function $G$.
\end{proposition}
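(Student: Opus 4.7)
The plan is to derive \eqref{38B} from the Dirichlet-form / local relaxation inequality used in the proof of Proposition~\ref{prop:mumu} (cf.\ \eqref{relax}), now taking the regularized measure $\mu_\by^{B,\tau}$ as the reference and $\mu_\by^\tau$ as the perturbation. The first step is to verify that $\mu_\by^{B,\tau}$ satisfies a logarithmic Sobolev inequality with constant $O(\tau/N)$. Its Hamiltonian consists of the convex potential $\sum_i \frac{\beta}{2}V(x_i)$, pairwise terms $-\frac{\beta}{N}\log|x_i-x_j|$ and one-body terms $-\frac{\beta}{N}\log|x_i-y_j^B|$ (all convex in the $\bx$ variables), together with the quadratic confinement $\sum_i Q_i^\tau(x_i)$ whose Hessian is $\tau^{-1}\Id$. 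Hence Bakry--\'Emery applied as in Lemma~\ref{lem:localConvexity} gives LSI with constant $\le C\tau/N$.

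With the LSI at hand, the local relaxation argument of \cite{ESYY} yields, exactly as in \eqref{relax},
$$
\Bigg|\bigl[\E_{\mu_\by^\tau}-\E_{\mu_\by^{B,\tau}}\bigr]\frac{1}{K}\sum_{i\in I}G\bigl(N(x_i-x_{i+1})\bigr)\Bigg|
\le C\bigg(\frac{\tau N^{\varphi/2}}{K}\,D(\mu_\by^\tau\,|\,\mu_\by^{B,\tau})\bigg)^{1/2} + Ce^{-cN^{\e_0/2}}\sqrt{S(\mu_\by^\tau\,|\,\mu_\by^{B,\tau})},
$$
and the entropy term is swallowed by Lemma~\ref{lm:SS}. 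It therefore remains to bound the Dirichlet form. Using the identity $D(\mu_1|\mu_2)=(8N)^{-1}\E_{\mu_1}|\nabla(H_1-H_2)|^2$ with $H_i=N\cH_i$, the gradient is supported on the $8B$ coordinates $x_i$ within $4B$ of either edge of $I$, and for each such $i$,
$$
\partial_{x_i}\bigl(\cH_\by^\tau-\cH_\by^{B,\tau}\bigr)=-\frac{\beta}{2N}\sum_j\bigg[\frac{1}{x_i-y_j}-\frac{1}{x_i-y_j^B}\bigg]=-\frac{\beta}{2N}\sum_j\frac{y_j-y_j^B}{(x_i-y_j)(x_i-y_j^B)},
$$
with $j$ running over the $4B$ modified indices at the adjacent edge. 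The denominators are controlled by the regularity \eqref{ybregular} of $\by^B$ and by LSI-based concentration of $x_i$ around $\gamma_i$ at scale $\sqrt{\tau/N}$ under $\mu_\by^\tau$, while $|y_j-y_j^B|\le CB/N$ in the worst case. A direct power count, taking into account the factor $N^2$ from $H=N\cH$, gives $D(\mu_\by^\tau|\mu_\by^{B,\tau})\le N^{2b+o(1)}$; inserted above, this produces an error of order $N^{(\varphi/2+1+2b-t-k)/2+o(1)}$, which tends to $0$ under \eqref{cond2}.

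The main obstacle lies precisely in the Dirichlet-form estimate: the two sums comprising $\partial_{x_i}(H_1-H_2)$ are individually singular -- each term $(x_i-y_j)^{-1}$ can be as large as $N$ when $x_i-y_j$ reaches its minimal admissible scale -- and only the cancellation via the identity displayed above brings the gradient under control. Implementing this cancellation requires simultaneous lower bounds on both $|x_i-y_j|$ and $|x_i-y_j^B|$ under $\mu_\by^\tau$. Regularity of $y_j^B$ is built into \eqref{ybregular}, while the original $y_j$'s near the edge are controlled by the gap bound encoded in $\cG$, and the repulsion of $x_{L+1}$ from $y_L$ (and symmetrically at the upper edge) is supplied by the polylogarithmic moment bound on $|\log(x_{L+1}-y_L)|$ established in Lemma~\ref{lm:rep}, applied equally to $\mu_\by^{B,\tau}$.
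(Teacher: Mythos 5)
Your overall architecture—an LSI for $\mu_\by^{B,\tau}$, a local-relaxation comparison inequality, and an estimate of the discrepancy between the two Hamiltonians—mirrors the paper's, but you pick the wrong comparison quantity: you try to bound the \emph{Dirichlet form} $D(\mu_\by^\tau\mid\mu_\by^{B,\tau})$, whereas the paper bounds the \emph{relative entropy} $S(\mu_\by^{B,\tau}\mid\mu_\by^\tau)$ and invokes Lemma~\ref{lm:Scomparison}, not Lemma~\ref{lm:Dcomparison}. This distinction is not cosmetic. On $\cG$ the only lower bound on $y_L-y_{L-1}$ is $\exp(-N^{\e_0})$, and when $j$ is near $L$ the original boundary point $y_j$ can be that close to $y_L$ while $x_{L+1}$ ranges arbitrarily near $y_L$. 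The gradient of the Hamiltonian difference at $i=L+1$ therefore contains $\frac{1}{N}\big[(x_{L+1}-y_j)^{-1}-(x_{L+1}-y_j^B)^{-1}\big]$ whose first summand has only the deterministic lower bound $x_{L+1}-y_j\ge y_L-y_j\ge e^{-N^{\e_0}}$, so its square can be of size $e^{2N^{\e_0}}$. The ``cancellation'' you appeal to replaces the numerator by $y_j-y_j^B$ (of order $\delta$ or $B/N$) but leaves the denominator factor $x_{L+1}-y_j$ intact, so the Dirichlet form can still be exponentially large in $N^{\e_0}$. Your claimed bound $D\le N^{2b+o(1)}$ does not hold, and the tool you cite to rescue it, Lemma~\ref{lm:rep}, only controls moments of $\log(x_{L+1}-y_L)$, which is far too weak to control inverse powers such as $(x_{L+1}-y_j)^{-2}$.

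The paper's actual route avoids this precisely. The entropy bound in Lemma~\ref{lm:SS}, $S(\mu_\by^\tau\mid\mu_\by^{B,\tau})+S(\mu_\by^{B,\tau}\mid\mu_\by^\tau)\le CB^2\log N$, is insensitive to the near-coincidences among the $y_j$ near $y_L$ because the Hamiltonian difference enters only through $\log$'s of particle-boundary gaps, and those logarithmic moments are exactly what Lemma~\ref{lm:rep} controls. Lemma~\ref{lm:Scomparison} then bounds the gap-statistics difference by $\sqrt{N^{1+\e_1}S\tau/K}$ plus a subexponential term: its proof only needs the \emph{time-integrated} Dirichlet form $\int_0^{\tau_1}D_\om(\sqrt{q_s})\,\rd s$, which equals $\frac14\big[S_\om(q)-S_\om(q_{\tau_1})\big]\le\frac14 S_\om(q)$ by $\pt_s S_\om(q_s)=-4D_\om(\sqrt{q_s})$, and so is finite and polynomial even though $D_\om(\sqrt{q_0})$ itself is not. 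This is why the condition \eqref{cond2} has the extra ``$+1$'' (from the $N^{1+\e_1}$ factor in front of $S$), and why the regularization-by-$\by^B$ step is controlled in entropy while only the subsequent comparison with the smooth reference measure $\g_\th^{\htau,s}$—where the singular $y_j$'s near the edge have already been replaced—is controlled in the Dirichlet-form sense via Lemma~\ref{lm:Dcomparison} and Lemma~\ref{eb}. To repair your argument you would need to replace the direct Dirichlet-form estimate by the entropy estimate of Lemma~\ref{lm:SS} and apply Lemma~\ref{lm:Scomparison} in place of the $D$-based inequality.
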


\begin{proof}
Since Lemma~\ref{lm:SS} and \eqref{cond2} guarantee that
\be\label{22}
 \frac {N S( \mu_{\by}^{B, \tau} \mid  \mu_{\by}^{\tau})
 \tau }  K \le  \frac {C N B^2 \tau }  K     \log N  \le N^{-\e'}
\ee
with some $\e'>0$,
Proposition~\ref{prop:spacingB} is a direct consequence of
 the following comparison
lemma which was first stated in  a remark after
Lemma 3.4 in \cite{ESY4}, see also Lemma 4.4 in \cite{ESYY}.
\end{proof}

\begin{lemma}\label{lm:Scomparison}
Let  $G:\bR\to\bR$
 be a bounded smooth function with compact support and let
a sequence $E_i$ be fixed. Let $I$ be an interval of indices with
$|I|=K$. Consider a measure $\om$ with relaxation time
$\tau$ and let $q\rd \om$ be another probability measure.
Then for any $\e_1>0$ and for any smooth compactly supported function  we have
\be\label{Diff}
\Big|\frac 1 K \sum_{i \in I}   \int G\big( N(x_i-E_i )\big) [q-1]\rd \omega\Big|
\le C\sqrt { \frac{ N^{1+\e_1} S_\om( q) \tau}{K}}+  Ce^{-cN^{\e_1}}\sqrt {S_\om(q)}
\ee
and
\be\label{Diff1}
\Big|\frac 1 K \sum_{i \in I}   \int G\big( N(x_i-x_{i+1} )\big) [q-1]\rd \omega
\Big|
\le C\sqrt { \frac{ N^{1+\e_1} S_\om( q) \tau}{K}}+  Ce^{-cN^{\e_1}}\sqrt {S_\om(q)} ,
\ee
where $S_\om(q):=S(q\om\mid \om)$.
\end{lemma}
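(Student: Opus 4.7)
The plan is to use the Dyson Brownian motion reversible with respect to $\omega$ to interpolate between $q$ and the stationary density $1$. Let $\mathcal{L}$ denote this generator (with the normalization $\int f\,\mathcal{L} g\,\rd\omega = -\frac{1}{2N}\int \nabla f\cdot\nabla g\,\rd\omega$), and let $q_s := e^{s\mathcal{L}}q$, so $q_0 = q$ and $q_\infty = 1$. That $\omega$ has relaxation time $\tau$ means it satisfies a logarithmic Sobolev inequality producing the exponential entropy decay $S(q_s\omega\mid\omega) \lec e^{-s/\tau}\, S_\omega(q)$, together with the general entropy-dissipation identity $\partial_s S(q_s\omega\mid\omega) \propto -D(q_s\omega\mid\omega)$ that yields $\int_0^\infty D(q_s\omega\mid\omega)\,\rd s \lec S_\omega(q)$.

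The main step is to pick $t := \tau N^{\e_1}$ and split
\[
  \int G(N(x_i - E_i))(q - 1)\,\rd\omega = \int G(N(x_i - E_i))(q - q_t)\,\rd\omega \;+\; \int G(N(x_i - E_i))(q_t - 1)\,\rd\omega.
\]
The second piece is handled uniformly in $i$ by Pinsker's inequality followed by the LSI entropy decay: its absolute value is at most $\|G\|_\infty \sqrt{2 S(q_t\omega\mid\omega)} \le Ce^{-t/(2\tau)}\sqrt{S_\omega(q)} = Ce^{-cN^{\e_1}}\sqrt{S_\omega(q)}$, which reproduces the second error term in \eqref{Diff}.

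For the first piece, I would write it as $-\int_0^t \partial_s\bigl(\int G(N(x_i - E_i))\,q_s\,\rd\omega\bigr)\rd s$ and integrate by parts against $\mathcal{L}$. Since $\nabla[G(N(x_i - E_i))]$ has only its $i$-th coordinate nonzero, equal to $NG'(N(x_i - E_i))$, the factor $N$ cancels the $1/(2N)$ in $\mathcal{L}$, yielding
\[
  \int G(N(x_i - E_i))(q - q_t)\,\rd\omega = \tfrac12\int_0^t\!\int G'(N(x_i - E_i))\,\partial_i q_s\,\rd\omega\,\rd s.
\]
I would then apply Cauchy-Schwarz in three successive stages: first on the sum over $i\in I$ (gaining $\sqrt{K}$ instead of losing $K$), then on the $s$-integral (yielding $\sqrt{t}$), and finally on the measure integral via $|\partial_i q_s| = 2\sqrt{q_s}\,|\partial_i\sqrt{q_s}|$ together with $\int q_s\,\rd\omega = 1$. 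Extending the partial sum $\sum_{i\in I}$ to the full $\sum_{j=1}^N$ identifies the result with a multiple of $D(q_s\omega\mid\omega)$, and the $s$-integral of this Dirichlet form is controlled by $S_\omega(q)$; the resulting bound is $\lec \|G'\|_\infty \sqrt{N t S_\omega(q)/K} = \|G'\|_\infty \sqrt{N^{1+\e_1}\tau S_\omega(q)/K}$, matching \eqref{Diff}. The variant \eqref{Diff1} with $G(N(x_i - x_{i+1}))$ follows by the same scheme: $\nabla[G(N(x_i - x_{i+1}))]$ has two nonzero components at positions $i$ and $i+1$ with opposite signs and magnitude $N\|G'\|_\infty$, and the Cauchy-Schwarz argument absorbs this extra factor $\sqrt 2$ into the constant.

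The main subtlety lies in the Cauchy-Schwarz step on the $i$-sum: the naive estimate $\tfrac1K\sum_i|a_i| \le \max_i|a_i|$ would force each term to be controlled separately and lose the averaging gain, whereas $\tfrac1K\sum_i |a_i| \le K^{-1/2}(\sum_i a_i^2)^{1/2}$ produces the decisive factor $1/\sqrt K$ because the $i$-sum of squared gradients $\sum_i (\partial_i q_s)^2/q_s$ is exactly what the Dirichlet form dissipates; this requires the index $i$ to label simultaneously the observable and the coordinate of differentiation. The remainder of the argument is the standard time-averaging technique of \cite{ESY4,ESYY}, and reduces to routine bookkeeping once the normalization of the LSI relative to the generator $\mathcal{L} = \tfrac{1}{2N}\Delta + (\mathrm{drift})$ has been fixed.
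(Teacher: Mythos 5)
Your proof is correct and follows essentially the same route as the paper: evolve $q$ along the $\omega$-reversible dynamics for time $\tau_1 = N^{\e_1}\tau$, estimate the equilibrated piece by Pinsker plus LSI entropy decay, and control the transient piece by differentiating in $s$, using the self-adjointness of $\cL$ to move one derivative onto the observable (cancelling the factor $N$ against the $1/(2N)$ in $\cL$), and then applying Cauchy–Schwarz so that the squared gradient of $\sqrt{q_s}$ reproduces the Dirichlet form, whose time integral is bounded by $S_\omega(q)$. The only cosmetic difference is that the paper performs a single Cauchy–Schwarz over the product measure $\rd s\otimes\rd\omega\otimes\text{counting}$ rather than your "three successive stages," but these are the same inequality and give the same bound.
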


\begin{proof} Let $q$ evolve by the dynamics $\pt_t q_t = \cL q_t$, where
$\cL$ is the generator defined by
\be\label{def:dir}
     \int -f \cL f \rd \om = D_\om(f)=\frac{1}{2N}\int |\nabla f|^2 \rd \om.
\ee
Let $\tau_1 = N^{\e_1} \tau$.
Since $q_{\tau_1}$ is already subexponentially close to $\om$ in entropy sense,
$S_\om(q_{\tau_1})\le C\exp(-cN^{\e_1})S_\om(q)$,
and the total variation norm can be estimated by the relative entropy,
we only have to  compare $q$ with $q_{\tau_1}$.

By  differentiation, we have (the summation over $i$ always runs $i\in I$)
\begin{align}
\int \frac 1 K \sum_{i} & G\Big( N(x_i-E_i ) \Big)  q_{\tau_1} \rd \omega  -
\int \frac 1 K \sum_{i}  G\Big( N(x_i-E_i) \Big)  q \rd \omega \\
&= \int_0^{\tau_1}  \rd s \int  \frac 1 K \sum_{i}
   \pt_i G\Big( N(x_i-E_i))\Big)
\pt_{i} q_s   \rd \omega.
\end{align}
Here we used the definition of $ \cL$ from \eqref{def:dir} and note that
the $1/N$ factor present in \eqref{def:dir} cancels the factor $N$
from the argument of $G$.
 {F}rom the Schwarz inequality and $\pt q = 2 \sqrt{q}\pt\sqrt{q}$,
the last term is bounded by
\begin{align}\label{4.1}
 \Big[  \frac {N} { K^2} \int_0^{\tau_1}  \rd s \int &
\sum_{i}  \Big[\pt_i G \big(N(x_i -E_i ) \big)\Big] ^2
 \, q_s \rd \omega
\Big]^{1/2} \left [ \int_0^{\tau_1}  \rd s \int  \frac 1 {N } \sum_{i}
 (\pt_{i}\sqrt {q_s})^2  \rd \omega \right ]^{1/2} \nonumber \\
\le &  \; C \sqrt { \frac{ N S_\omega(q) \tau_1}{K}}
\end{align}
by integrating $\pt_s S_\om(q_s) =-4 D_\om(\sqrt{q_s})$.
This proves \eqref{Diff} and the proof of \eqref{Diff1} is analogous.
\end{proof}

\subsection{Accuracy of block averages}

In the next Section~\ref{sec:dir} we will compare the regularized local relaxation measure
$\mu_\by^{B,\tau}$ with the reference measure $\g_\th^{\htau,s(\by)}$
in Dirichlet form sense. As a preparation for this step, we give an estimate
on the location of the block averages $x_j^{[B]}$. Recall their definition
$$
   x_j^{[B]}:= \frac{1}{2B+1} \sum_{|k-j|\le B} x_k
$$
for any  $j\in \llbracket L+B+1,  L+K-B\rrbracket$.
The following lemma shows  concentration on a scale $\zeta$ for $x_j^{[B]}$
w.r.t. $\mu_\by^\tau$ and $\mu_\by^{B, \tau}$. The scale $\zeta$ is
larger than $\delta$ but will be smaller  than  $K/N$, the length of
configuration space interval.  Thus that the accuracy of the position
 of $x_j$  decreases from $\delta$ to $\zeta$, but
the accuracy of $y_k$ is still $\delta$.

\begin{lemma} \label{lm:accuracy}
Set $\zeta = N^{-z}$, $t=2d-1-\varphi$ and fix $\by\in \cG$.
For any $j\in \llbracket L+B+1,  L+K-B\rrbracket$ we have
\be
\label{cont1}
   \P_{\mu_\by^\tau}\Big( \big| x_j^{[B]}
  - \E_{\mu_\by^\tau}  x_j^{[B]}\big|\ge \zeta\Big)\le c_1 e^{-c_2 N^{\e'}}
\ee
and
\be
\label{cont2}
   \P_{\mu_\by^{B,\tau}}\Big( \big| x_j^{[B]}
  - \E_{\mu_\by^{B,\tau}}  x_j^{[B]}\big|\ge \zeta\Big)\le c_1 e^{-c_2 N^{\e'}}
\ee
provided
\be\label{66}
 z  \le -\varphi+ \min \Big( d- \frac{b}{2}-\frac{\varphi}{2},
\; d-\frac{k}{2}+ \frac{b}{2}\Big)
\ee
for some $\e'=\e'(d,\varphi)>0$ depending only on $d$ and $\varphi$.
Furthermore, we have
\be\label{E-E}
   \Big | \E_{\mu_\by^{B,\tau}} x_j^{[B]}  -    \gamma_j^{[B]}
  \Big | \le 5\zeta, \qquad
\Big | \E_{\mu_\by^\tau} x_j^{[B]}  - \gamma_j^{[B]}\Big|\le 5\zeta, \qquad
\Big | \E_{\mu_\by} x_j^{[B]}  - \gamma_j^{[B]}\Big|\le 5\zeta.
\ee
\end{lemma}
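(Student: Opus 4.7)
The plan is to prove the concentration estimates \eqref{cont1}--\eqref{cont2} and the expectation bounds \eqref{E-E} as two logically independent steps; the concentration will follow from a log-Sobolev argument based on the strong convexity supplied by the quadratic confinement, while the mean bounds will follow from Stein's identity combined with block-averaging cancellations in the Coulomb sum.

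\textbf{Concentration.} I would first observe that the Hessians of $\cH_\by^\tau$ and $\cH_\by^{B,\tau}$ dominate $\tau^{-1}\Id$: the quadratic confinement $Q^\tau$ contributes exactly $\tau^{-1}\Id$, and as in \eqref{eqn:Hconv} all remaining pieces (the convex potential $V$, the internal logarithmic interaction, and the log-interactions with the boundary) are individually convex and so add nonnegative semidefinite contributions. The Bakry--\'Emery criterion, extended to the convex simplex $\Xi^{(K)}$ by the standard mollification procedure underlying Lemma~\ref{lem:localLSI}, then gives a logarithmic Sobolev inequality with constant $2\tau/N$. Applying Herbst's lemma to the linear statistic $x_j^{[B]}=\sum_m v_m x_m$, for which $|v|^2=(2B+1)^{-1}$, yields
\[
\P\bigl(|x_j^{[B]}-\E x_j^{[B]}|\ge\zeta\bigr)\le 2\exp\bigl(-c\,NB\zeta^2/\tau\bigr)=2\exp\bigl(-cN^{2d+b-2z-\varphi}\bigr),
\]
and under \eqref{66} the exponent exceeds $N^{\e'}$ for some $\e'>0$ depending only on $d$ and $\varphi$.

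\textbf{Mean bounds.} The third inequality in \eqref{E-E} is immediate from $\by\in\cG$: Cauchy--Schwarz gives $|\E_{\mu_\by}x_j-\gamma_j|\le\delta$, which is preserved under the block average and is $\le\zeta$ under \eqref{66}. For the relaxation measures $\mu_\by^\tau$ and $\mu_\by^{B,\tau}$ I would use Stein's integration-by-parts identity $\E_{\mu_\by^\tau}[\partial_j\cH_\by^\tau]=0$, which after isolating the confinement contribution $(x_j-\gamma_j)/\tau$ reads
\[
\E_{\mu_\by^\tau}[x_j-\gamma_j]=-\tau\,\E_{\mu_\by^\tau}\bigl[\tfrac\beta2 V'(x_j)-\tfrac\beta N\textstyle\sum_{k\ne j}(x_j-\lambda_k)^{-1}\bigr],
\]
with $\lambda_k=x_k$ for $k\in I$ and $\lambda_k=y_k$ for $k\notin I$. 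The bracketed quantity is the residual of the equilibrium equation, which when evaluated at $\bx=\gamma$ is $O(1+N\delta)$ by the Riemann-sum comparison against $\tfrac\beta2 V'(\gamma_j)=\beta\int\rho(s)(\gamma_j-s)^{-1}\rd s$ combined with the boundary accuracy $|y_k-\gamma_k|\le\delta$ for $\by\in\cG$. Linearizing around $\gamma$ (justified by the concentration step up to exponentially small probability) produces a self-consistent system for $\psi_k:=\E_{\mu_\by^\tau}(x_k-\gamma_k)$ governed by $\tau^{-1}\Id+\nabla^2\cH_\by(\gamma)$, which is diagonally dominant with gap $\tau^{-1}$. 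The key improvement comes from block-averaging $\psi_j^{[B]}$: the antisymmetry $(x_m-x_k)^{-1}+(x_k-x_m)^{-1}=0$ cancels all internal pairs $m,k\in[j_0-B,j_0+B]$, leaving only the interaction with points outside the window, which is smooth in the block center. Together with \eqref{66} this gives the desired $5\zeta$ bound; the argument for $\mu_\by^{B,\tau}$ is identical, using the regularity \eqref{ybregular} of $\by^B$.

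\textbf{Main obstacle.} The delicate step is the sharp control of the Stein residual after block averaging. The naive pointwise bound $|\psi_k|\le\tau(1+N\delta)$ obtained from the $\ell^\infty$ diagonal-dominance estimate is insufficient when $b$ is close to its lower limit $2\varphi$; the resolution requires systematically exploiting the cancellation of internal Coulomb pairs under block averaging together with a short-distance repulsion estimate of the type \eqref{tailprob1} to handle near-diagonal singularities in $\sum_{k\ne j}(x_j-\lambda_k)^{-1}$, so that only the smooth external contributions survive at the target accuracy $\zeta$.
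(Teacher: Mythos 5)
Your concentration argument for \eqref{cont1} and \eqref{cont2} matches the paper's: the Hessian of $\cH_\by^\tau$ (and of $\cH_\by^{B,\tau}$) dominates $\tau^{-1}\Id$ thanks to the quadratic confinement, Bakry--\'Emery gives a log-Sobolev constant $2\tau/N$, and Herbst applied to the linear statistic $x_j^{[B]}$ with $|v|^2=(2B+1)^{-1}$ gives a tail bound $\exp(-cNB\zeta^2/\tau)$ which under \eqref{66} is subexponentially small. That part is fine. The third inequality in \eqref{E-E} via Cauchy--Schwarz and the third condition in the definition of $\cG$ (namely $\E_{\mu_\by}(x_j-\gamma_j)^2\le\delta^2$) is also exactly the paper's step.

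Where you depart from the paper, and where there is a genuine gap, is the first two inequalities in \eqref{E-E}. The paper does not use Stein's identity at all. Instead it exploits the entropy comparison proven in Lemma~\ref{lm:SS}, $S(\mu_\by^\tau\mid\mu_\by^{B,\tau})+S(\mu_\by^{B,\tau}\mid\mu_\by^\tau)\le CB^2\log N$, together with the elementary inequality $\P_\mu(A)\log\big(1/\P_\nu(A)\big)\le\log 2+S(\mu\mid\nu)$ from \eqref{PPS} (and, for $\mu_\by$ itself, the entropy bound $S(\mu_\by\mid\mu_\by^\tau)\le 4\tau D(\mu_\by\mid\mu_\by^\tau)\le 4N\delta^2K/\tau$ obtained from \eqref{SleqD} and $\by\in\cG$). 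The mechanism is purely probabilistic: each of $\mu_\by^\tau$, $\mu_\by^{B,\tau}$, $\mu_\by$ concentrates $x_j^{[B]}$ at scale $\zeta$ around its own mean (for the conditional measure $\mu_\by$ this is just Chebyshev plus $\by\in\cG$, giving $\delta^2/\zeta^2\to 0$), and the entropy control transfers the small-probability event $\{|x_j^{[B]}-\E_{\mu_\by^\tau}x_j^{[B]}|\ge\zeta\}$ to the other measure with sub-unit probability, forcing the two centers to be within $2\zeta$ of each other. No loop-equation analysis is needed.

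The Stein/loop-equation route you sketch has a circularity that your sketch does not resolve: even after block-averaging the Stein identity, the surviving "external" Coulomb terms are sums over the particles outside the block, whose expectations $\E[x_m]$ (and whose fluctuations close to the block boundary) are exactly the quantities you are trying to control. You acknowledge that the naive diagonal-dominance bound $|\psi_k|\le\tau(1+N\delta)$ is insufficient, but the proposed fix---"systematically exploiting the cancellation of internal Coulomb pairs together with a short-distance repulsion estimate"---is not an argument; in particular the near-diagonal singularities you flag are precisely what the paper circumvents by never taking expectations of $\partial_j\cH$. Moreover your target $O(\zeta)$ accuracy is well beyond what a Riemann-sum estimate of the equilibrium residual at $\gamma$ gives without an a priori rigidity estimate on $\E[x_k]$, which is what you are proving. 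You should replace this part by the entropy-comparison argument via Lemma~\ref{lm:SS} and \eqref{PPS}, which is the actual engine of the proof.
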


\begin{proof}
We will need two standard inequalities from probability theory.
The first one is
\be
\P_\mu (A) \cdot \log \frac{1}{\P_\nu(A)} \le  \log 2 + S(\mu| \nu)
\label{PPS}
\ee
for any set $A$ and probability measures $\mu, \nu$.
This can be obtained from the entropy inequality
$$
   \int f \rd \mu \le S(\mu| \nu) + \log \Big[ \int e^f \rd \nu\Big]
$$
by choosing $f(x) = b \cdot {\bf 1}_A(x)$ with $b= - \log \P^\nu(A)$.
Using Lemma~\ref{lm:SS} we thus obtain
\be\label{Acomp}
\P_{\mu_\by^{B,\tau}} (A)\le \frac {\log 2
+ CB^2\log N} { -\log \P_{\mu_\by^\tau} (A)}.
\ee

The second inequality is a concentration
estimate. Suppose that the
 probability measure $\om$
 satisfies the logarithmic Sobolev inequality (LSI), i.e.
\be
    S_\om(f) \le C_{\text{s}} \int |\nabla \sqrt{f}|^2 \rd \om
\label{lsi}
\ee
holds for any $f\ge0$ with $\int f\rd \om=1$. Then for any
random variable $X$ with $\E_\om X=0$ and any number $T>0$ we have
\be
  \E_\om e^{TX}\le \E_\om
\exp \left( \frac{C_{\text{s}} T^2}{2} \, |\nabla  X|^2 \right).
\label{conc}
\ee

Since the Hamiltonian $\cH_\by^\tau$ is convex with
$\nabla^2 \cH_\by^\tau\ge \tau^{-1}$,
by the Bakry-Em\'ery criterion the measure $\mu_\by^\tau\sim
\exp(-N \cH_\by^\tau)$
satisfies \eqref{lsi} with Sobolev constant $C_s= 2\tau/N$,
i.e.
\be\label{SleqD}
S(\nu\mid \mu_\by^\tau)\le  4\tau D(\nu\mid \mu_\by^\tau)
\ee
for any probability measure $\nu$ (recall that the
definition of the Dirichlet form \eqref{def:Dirform} contains
a $1/2N$ prefactor).
The same statements hold for the regularized measure
$\mu_\by^{B,\tau}$.

For $L+B+1\le j \le L+K-B$ define the event
\be
A= A_j = \big\{ \big|x_j^{[B]} -  \E_{\mu_\by^\tau} x_j^{[B]}  \big|
 \ge \zeta \big\},
\qquad \mbox{with}\quad \zeta  = N^{-z},
\ee
with a parameter $z\in (0,1)$  chosen later.
Using \eqref{conc} for $X=\pm(x_j^B -  \E_{\mu_\by^\tau} x_j^B)$
and noticing that $|\nabla X|^2 =(2B+1)^{-1}$, we obtain
\be\label{Atau}
\P_{\mu_\by^\tau} (A) \le 2e^{ - \frac{1}{2}N B \zeta^2 \tau^{-1} }.
\ee

Using now \eqref{Acomp}, we get
\be\label{BA}
\P_{\mu_\by^{B,\tau}} (A)\le \frac{ CB\tau}{N\zeta^2} \to 0
\ee
assuming
\be
 b -t + 2 z -1 < 0.
\ee
Using $t=2d-1-\varphi$,
we need
\be\label{662}
 z  < d-\frac{b}{2}-\frac{\varphi}{2}.
\ee
Under this condition we have from \eqref{Atau} that
\be
 \P_{\mu_\by^\tau} \Big( \big|x_j^{[B]} -  \E_{\mu_\by^\tau} x_j^{[B]}  \big|
 \ge \zeta \Big) \le 2e^{ - B^2 }.
\ee

Since the measure $\mu_\by^{B,\tau}$ is also concentrated by the LSI, we have
$$
   \P_{\mu_\by^{B,\tau}}\Big( \big| x_j^{[B]}-  \E_{\mu_\by^{B,\tau}}   x_j^{[B]}
  \big|\ge \zeta
  \Big)\le 2e^{ - B^2 }\to0
$$
and together with \eqref{BA} we have
\be\label{E-E1}
 \Big | \E_{\mu_\by^{B,\tau}} x_j^{[B]}  -    \E_{\mu_\by^\tau} x_j^{[B]}
  \Big | \le 2\zeta.
\ee
Therefore $x_j^{[B]}$ is concentrated on a scale $\zeta$ around the same
point w.r.t both measures $\mu_\by^\tau$ and $\mu_\by^{B,\tau}$.

Using \eqref{SleqD} and that $\by\in \cG$ we get
\be\label{332}
S( \mu_\by | \mu_\by^\tau ) \le
 4\tau  D( \mu_\by | \mu_\by^\tau)
  \le  \frac {4N}{ \tau}   \E_{ \mu_\by}    \sum_{j \in I} (x_j - \gamma_j)^2
\le  \frac { 4N \delta^2 K} \tau.
\ee
Hence by \eqref{PPS} and \eqref{Atau} we obtain
\be\label{PPPA}
\P_{\mu_\by} (A)\le \frac {\log 2 +
 \frac { 4N \delta^2 K} \tau  } {- \log \P_{\mu_\by^\tau} (A) }
 \le  \frac { C\delta^2 K }{ B\zeta^2}
\to 0
\ee
provided that
\be
  z < d-\frac{k}{2}+\frac{b}{2}.
\ee

Now by the definition of $\by\in \cG$ in \eqref{goodset} we have
\begin{align}
    \P_{\mu_\by}\Big( \big|x_j^{[B]}&
- \E_{\mu_\by} x_j^{[B]}\big|\ge \zeta\Big) \nonumber
 \le \zeta^{-2}
\E_{\mu_\by}\big|x_j^{[B]} - \E_{\mu_\by} x_j^{[B]}\big|^2 \\
& \le\frac{1}{(2B+1)\zeta^2} \sum_{|k-j|\le B}
\E_{\mu_\by}\big|x_k - \E_{\mu_\by} x_k\big|^2 \nonumber \\
& \le\frac{1}{(2B+1)\zeta^2} \sum_{|k-j|\le B}
\E_{\mu_\by}\big|x_k - \gamma_k\big|^2
\le\frac{\delta^2}{\zeta^2}\to 0  \nonumber
\end{align}
using \eqref{662}. Combining it with \eqref{PPPA} we
obtain
\be\label{E-E2}
\Big | \E_{\mu_\by} x_j^{[B]}  -    \E_{\mu_\by^\tau} x_j^{[B]}
  \Big | \le 2\zeta.
\ee
Finally, since $\by\in\cG$, we have
$$
   \Big( \E_{\mu_\by} x_j^{[B]}- \gamma_j^{[B]}\Big)^2 \le
   \E_{\mu_\by} \Big( x_j^{[B]}-  \gamma_j^{[B]}\Big)^2 \le \delta^2\le\zeta^2,
$$
which, combined with \eqref{E-E1} and \eqref{E-E2}, yields \eqref{E-E}.
This completes the proof of Lemma~\ref{lm:accuracy}.
\end{proof}

\subsection{Proof of Proposition~\ref{prop:musig}}\label{sec:musig}

Now we will compare the regularized local relaxation measure
$\mu_\by^{B,\tau}$
 with the reference measure $\g_\th^{\htau,s}$
in Dirichlet form sense. Recall their definitions from
\eqref{def:regloc} and \eqref{def:ref}, respectively,
and recall that $\tau=\tau(\by):= \htau/s(\by)^2$.
Here $s=s(\by)$ is a function that is approximately 1 for
good external configurations $\by\in \cG$ (see \eqref{s-1}).

The  result is the following comparison of local gap statistics.
Combining this result with
Proposition~\ref{prop:spacingB} and checking that the condition
\eqref{cond2} is satisfied with the choice of parameters given below,
we arrive at the proof of Proposition~\ref{prop:musig}. \qed

\begin{proposition}\label{prop:spacingD} Fix $\varphi\le \frac{1}{38}$.
Let $\by\in \cG$, $\tau=\tau(\by)=\htau/s(\by)^2$ and assume
that for the parameters $\delta=N^{-d}$, $B=N^{b}$, $K=N^k$
with $d=1-\varphi$, $b=8\varphi$, $k=\frac{39}{2}\varphi$. Then with $t=2d-1-\varphi=1-3\varphi$
let  $\htau = N^{-t}$ with $t:=2d-1-\varphi=1-3\varphi$.
Then
\be\label{38D}
\Bigg| \big[ \E_{ \mu_{\by}^{B, \tau} } -\E_{ \g_{\by}^{\htau,s}} \big]
 \frac 1 K \sum_{i \in I} G\Big( N(x_i-x_{i+1}) \Big)
\Bigg|
\to 0
\ee
as $N \to \infty$
for any smooth and compactly supported test function $G$.
\end{proposition}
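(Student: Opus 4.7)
The approach is to apply Lemma \ref{lm:Scomparison} with reference measure $\omega := \g_\th^{\htau, s}$ and density $q := \rd\mu_\by^{B,\tau}/\rd\omega$. The critical feature behind this choice is that $\tau(\by) = \htau/s(\by)^2$ has been calibrated so that the quadratic confinements in $\cH_\by^{B,\tau}$ and $\cH_\th^{\htau, s}$ are \emph{identical}: both reduce to $\tfrac{1}{2\tau}(x_i - \theta_i/s)^2$. Consequently the Hamiltonian difference $\Delta\cH := \cH_\by^{B,\tau} - \cH_\th^{\htau,s} = \sum_{i \in I}\mathcal{W}_i(x_i)$ is a pure one-body sum in which $\mathcal{W}_i$ encodes only the external-potential difference $\tfrac{\beta}{2}[V(E) - s^2 E^2]$ and the boundary-interaction difference $-\tfrac{\beta}{N}\sum_{j\not\in I}[\log|E - \tilde y_j^{(i)}| - \log|E - \theta_j/s|]$. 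Since both Hamiltonians are $\tau^{-1}$-convex, the Bakry-\'Emery LSI for $\omega$ together with $\nabla\log q = -N\nabla\Delta\cH$ yields $S_\omega(q) \le 4\tau D_\omega(\sqrt q) = \tfrac{N\tau}{2}\E_{\mu_\by^{B,\tau}}|\nabla\Delta\cH|^2$, reducing the problem to estimating $\E_{\mu_\by^{B,\tau}}\sum_i |\mathcal{W}_i'(x_i)|^2$.

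\textbf{Main force estimate.} Writing $\mathcal{W}_i'(E) = \tfrac{\beta}{2}[V'(E) - 2s^2 E] - \beta[R_y^{(i)}(E) - R_\theta(E)]$ in terms of the Stieltjes-type empirical sums $R_y^{(i)}(E) := \tfrac{1}{N}\sum_{j\not\in I}(E - \tilde y_j^{(i)})^{-1}$ and analogously for $R_\theta$, the mean-field equilibrium relations $\tfrac12V'(E) = \int \rho_V(s)/(E-s)\,\rd s$ and $s^2 E = \int \rho_{W_s}(s)/(E-s)\,\rd s$ together with the matching of local densities $\rho_V(0) = \rho_{W_s}(0)$ (arranged by the initial rescaling $s_q = 1$) force $\mathcal{W}_i'(E) \to 0$ in the large-$N$ limit. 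I would quantify this rate by decomposing the sums over $j$ into three regions. For near indices $|j-L| \le 2B$ (and symmetrically near $L+K+1$), the regularization identity \eqref{ybth} makes $\tilde y_j^{(i)} = \theta_j/s$ exactly, so the two Stieltjes sums cancel pointwise and the most dangerous edge singularities are annihilated; the transition strip $2B < |j - L| \le 4B$ contributes only $O(B/K)\log N$ via the uniform regularity \eqref{ybregular}; and for bulk indices $|j - L| > 4B$, the rigidity $|y_j - \gamma_j| \le \delta$ combined with the estimate \eqref{38} ($|\gamma_{L+k} - \theta_{L+k}'| \le C(k/N)^2 + C\delta$) gives $|y_j - \theta_j/s| \lesssim \delta + (j-L)^2/N^2$, whose contribution to $R_y^{(i)} - R_\theta$ after summation is $O(\delta N/B)$ plus lower-order curvature terms. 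The Taylor expansion of the potential piece $V'(E) - 2s^2 E$ around $E = 0$ is absorbed at every order by the equilibrium equations; what survives is only the rigidity discrepancy above. Averaging over $x_i$ using the block-average concentration from Lemma \ref{lm:accuracy} (which pins $x_j^{[B]}$ to $\gamma_j^{[B]}$ on scale $\zeta$ well below $\delta$) then gives $\E_{\mu_\by^{B,\tau}}|\nabla\Delta\cH|^2 \le K \cdot N^{-\nu}$ for some positive exponent $\nu=\nu(\varphi)>0$.

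\textbf{Conclusion and main obstacle.} Inserting the entropy bound $S_\omega(q) \le C N\tau K \cdot N^{-\nu}$ into Lemma \ref{lm:Scomparison} yields a bound of order $C N^{1+\epsilon_1/2}\tau \cdot N^{-\nu/2}$ for the observable difference, which upon substituting $\tau = N^{-1+3\varphi}$ becomes $C N^{3\varphi + \epsilon_1/2 - \nu/2}$; the constraint $\varphi \le 1/38$ ensures that the various exponents generated by the parameter choices $d = 1-\varphi$, $b = 8\varphi$, $k = \tfrac{39}{2}\varphi$ combine to make this a negative power of $N$, and the condition $\eqref{cond2}$ needed in Proposition \ref{prop:spacingB} is likewise satisfied by $1 + 2b - t - k = 1 + 16\varphi - (1-3\varphi) - \tfrac{39}{2}\varphi < 0$. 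The main obstacle is the careful control of $\mathcal{W}_i'(E)$ near the endpoints of the window $[y_L, y_{L+K+1}]$, where boundary particles from $\by$ may accumulate on scales much finer than $\delta$ and generate very large $1/(E - y_j)$ contributions; the regularization scheme of Section 5.1, which replaces a band of $4B$ boundary particles at each edge by their evenly-spaced classical counterparts $\theta_j/s$, is precisely designed to cancel these leading-order edge singularities between the two ensembles, while the unavoidable transition-layer cost $O(B/K)$ forces the parameter inequality $b < k$ encoded in the statement.
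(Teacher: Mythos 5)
You have identified the right objects to compare and the right structural features (the calibration $\tau=\htau/s^2$ making the quadratic confinements cancel, the one-body structure of $\Delta\cH$, the role of the regularization $\by\to\by^B$), but the proof fails at the comparison-lemma step because you invoke the entropy version, Lemma~\ref{lm:Scomparison}, rather than the Dirichlet-form version, Lemma~\ref{lm:Dcomparison}, which the paper actually uses. These two lemmas are not interchangeable here, and the difference is decisive. Lemma~\ref{lm:Scomparison} controls the observable by $\sqrt{N^{1+\e_1} S_\om(q)\tau/K}$; if you then bound $S_\om(q)\le 4\tau D_\om(\sqrt q)$ by the Bakry--\'Emery LSI (as you propose), you obtain $\sqrt{4N^{1+\e_1}\tau^2 D/K}$, whereas Lemma~\ref{lm:Dcomparison} gives $\sqrt{N^{\e_1}D\tau/K}$ directly. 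The ratio of the two is $2\sqrt{N\tau}=2N^{(1-t)/2}=2N^{3\varphi/2}$, so your route is worse by a polynomial factor that does not tend to one. With the stated parameters, Lemma~\ref{eb} yields $\tau D/K\lesssim N^{-\varphi}$ (the dominant contribution being $N\delta^2/\htau = N^{-\varphi}$), so the Dirichlet-form route gives an observable error $\sqrt{N^{\e_1-\varphi}}\to 0$, while your route gives $\sqrt{N^{\e_1+2\varphi}}\to\infty$. Concretely, writing $\nu$ as in your exponent bookkeeping, the best achievable value consistent with $D=\tfrac{N}{2}\E|\nabla\Delta\cH|^2$ and $\tau D/K\sim N^{-\varphi}$ is $\nu=4\varphi$, and then $3\varphi+\e_1/2-\nu/2=\varphi+\e_1/2>0$, so your bound diverges for every admissible $\e_1>0$.

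This is not a matter of tweaking parameters: the loss is the factor $N\tau$, i.e. the ratio of the relaxation time to the microscopic scale, and the introduction of the paper explicitly emphasizes that passing to the Dirichlet form per particle (rather than entropy) is precisely what makes the local uniqueness argument close. Your own subsequent analysis of $\nabla\Delta\cH$ (decomposing into edge, transition, and bulk indices and using \eqref{ybth}, \eqref{ybregular}, \eqref{38}, rigidity of $\by$, and block-average concentration from Lemma~\ref{lm:accuracy}) is the right skeleton for proving the Dirichlet form estimate of Lemma~\ref{eb}; the fix is to feed that estimate into Lemma~\ref{lm:Dcomparison} with $\om=\g_\th^{\htau,s}$ and $q\rd\om=\mu_\by^{B,\tau}$, rather than converting to entropy first, and to handle the residual entropy term in \eqref{Diff1new} by a final LSI bound as the paper does.
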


\begin{proof}
The key technical estimate is the following lemma whose proof will
take up most of this section.

\begin{lemma}\label{eb} Let $\varphi>0$. Suppose $B=N^b$, $K=N^k$ with $0<b<k<1$, and $\delta= N^{-d}$
with  $d\in (0,1)$. Suppose that these parameters satisfy
\be
   1-b <  -\varphi+ \min \Big( d- \frac{b}{2}-\frac{\varphi}{2},
\; d-\frac{k}{2}+ \frac{b}{2}\Big),
\label{p1}
\ee
i.e. one can choose a number $z>1-b$ and satisfying \eqref{66}.
Let $\by\in\cG=\cG_{\delta, \e_0}$
be a good configuration. Assume that $\e_0\le \e'/10$, where $\e'=\e'(d,\varphi)$
is obtained in Lemma~\ref{lm:accuracy}.
Assume that the equilibrium measure $\rho_V$ is $C^1$ away from the edges.
Let  $\htau = N^{-t}$
with $t=2d-1-\varphi$.
Then the Dirichlet form of $ \mu_{\by}^{B,\tau}$ with respect to the reference measure
 is bounded by
\be\label{largebound}
  \frac{\tau}{K}  D \big(\mu_\by^{B,\tau}\mid \g_\th^{\htau,s})
  \le  C\htau (\log N)  \Big [ 
 \frac {K^2 } {N} +
 \frac {N  \delta^2}  { \htau^2}+   \frac {K^4} {N^3\htau^2}
  +  \frac { \delta^2 N^3} { BK} +   N^{3/5 + \varphi} \Big ]
 + c_1 e^{-c_2N^{\e'/3}}.
\ee
\end{lemma}
The prefactor $\tau/K$ is for convenience; the local
gap statistics of two measures are approximately the same
if $\tau D/K\to 0$. More precisely, we have the following
general theorem which is a slight modification of
Lemma 3.4 \cite{ESY4} (see also Theorem~4.3 in \cite{ESYY}).
This result was originally proven for $\beta\ge 1$,
but by a regularization argument
it  extends to any $\beta>0$, see  Lemma A.2 of \cite{EKYY2}
for details.

\begin{lemma}\label{lm:Dcomparison}
Let  $G:\bR\to\bR$
 be a bounded smooth function with compact support.
 Consider a measure on $\Sigma_K:=\{ \bx\; :\;  x_1<   \ldots < x_K\} \subset \RR^K$ defined by 
\begin{equation}\label{01}
\rd \om\sim e^{- \beta N \wh \cH}\rd\bx,
\quad
\wh \cH(\bx) =  \cH_0(\bx)
-\frac{1}{N} \sum_{1\leq i<j\leq K}\log (x_j-x_i) ,
\end{equation}
with the property that  $\nabla^2 \cH_0 \ge \tau^{-1}$ holds for some positive constant $\tau$.  
Let $q\rd \om$ be another probability measure.
 Let $I\subset\{1, 2, \ldots, K-1\}$ be an interval of indices.
 Then for any $\e_1>0$ and for any smooth compactly supported function  we have
\be\label{Diff1new}
\Big|\frac{1}{|I|} \sum_{i \in I}   \int G\big( N(x_i-x_{i+1} )\big) [q-1]\rd \omega
\Big|
\le C\sqrt { \frac{ N^{\e_1} D_\om( \sqrt q) \tau}{|I|}}+  Ce^{-cN^{\e_1}}\sqrt {S_\om(q)} ,
\ee
where $D_\om(\sqrt q):=D(q\om\mid \om)$.
\end{lemma}

We will apply Lemma~\ref{lm:Dcomparison}
for the measure  $\om =\g_\th^{\htau,s}$. 
It has the form \eqref{01} except that
$\g_\th^{\htau,s}$ is restricted to the interval $[y_L, y_{L+K+1}]$, i.e.
the relations $y_L< x_{L+1}$ and $x_{L+K}< y_{L+K+1}$ also hold in
addition to the ordering relation $ x_{L+1} < x_{L+2} < \ldots
 < x_{L+K}$.  Notice that the Hamiltonian   
$\cH_{\th}^{\htau, s}$ of the measure $\g_\th^{\htau,s}$
contains a term  $\frac{1}{N} 
\big[ \log (x_{L+1} - y_L ) + \log (y_{L+K+1}-x_{L+K})\big] $. 
This term
 confines the particles in the interval $[y_L, y_{L+K+1}]$
exactly as the term $\log (x_{i+1}-x_{i})$ guarantees 
the ordering constraint $x_i< x_{i+1}$.   Hence the regularization argument 
in Lemma A.2 of \cite{EKYY2}  can be used to treat  the additional constraints, $y_L < x_{L+1}$
and $x_{L+K}< y_{L+K+1}$.

The proof of Proposition~\ref{prop:spacingD} now follows
from Lemma~\ref{eb} and Lemma~\ref{lm:Dcomparison}
with  $\om =\g_\th^{\htau,s}$ and $q\rd\om = \mu_\by^{B,\tau}$.
The parameters $b,k,d\in (0,1)$ have to satisfy the following relations
from \eqref{p1} and from the requirement that
the right side of \eqref{largebound} converges to zero:
\begin{align}
b &< k \non \\
1-b+\varphi&< d-\frac{b}{2}-\frac{\varphi}{2} \non \\
1-b+\varphi&<  d-\frac{k}{2}+\frac{b}{2} \non\\
1-2d+\varphi +2k -1&< 0\non\\
1-2d + (2d-1-\varphi)&< 0\non\\
4k-3+(2d-1-\varphi)&< 0\non\\
1-2d+\varphi -2d+3-b-k&< 0\non \\
 1-2 d + 2 \varphi + \frac 3 5 &< 0. \non
\end{align}
It is easy to check that all these conditions are satisfied
if, e.g.
$$
   d=1-\varphi, \qquad  b=8\varphi, \qquad k=\frac{39}{2}\varphi, \qquad 0<\varphi \le \frac{1}{38}.
$$
This choice is not optimal for the above system of inequalities,
but we took into account that the parameters will also have
to satisfy \eqref{cond2} so that we could combine
Proposition~\ref{prop:spacingD} and Proposition~\ref{prop:spacingB}
to arrive at Proposition~\ref{prop:musig}.

Finally, the entropy term $S \big(\mu_\by^{B,\tau}\mid \g_\th^{\htau,s})$
in \eqref{Diff1new} can be estimated by the Dirichlet form via
the logarithmic Sobolev inequality.
This completes the proof of Proposition~\ref{prop:spacingD}.
\end{proof}

\bigskip

\subsection{Dirichlet form estimate: proof of Lemma~\ref{eb}}\label{sec:dir}

By definition,
\[  \frac{\tau}{K}  D \big(\mu_\by^{B,\tau}\mid \g_\th^{\htau,s})
= \frac{\tau }{2N K }\int  \Big|\nabla \log \Big( \frac{\mu_\by^{B,\tau}}{\g_\th^{\htau,s}}
\Big)\Big|^2 \rd   \mu_{\by}^{B,\tau} \le
\frac{\tau N}{ K} \int  \sum_{ L+1\le  j \le L+ K }  Z_j^2 \rd  \mu_{\by}^{B,\tau},
\]
where $Z_j$ is defined as follows:
For $L+1 < j \le L + 4B$, we set
\begin{align}
Z_j := &
 \frac{\beta}{2}V'(x_j) -
  \frac \beta N \sum_{    k < L-2B\atop k >L+K }
   \frac 1 {x_j- y_{k}^B}
 - \frac{\beta}{2} W_s '( x_j) +
 \frac \beta N \sum_{  k <  L-2B\atop k>L+K}   \frac 1 {x_j-  \th_{k}'}
   +     \frac { \gamma_{j}  - \theta_{j}' }  {  \tau  } \nonumber
\end{align}
(recall that $\th'_j=\th_j/s$ and we set $W_s(x)=s^2x^2$).
Note that the summation at the lower edge is only for $k< L-2B$ instead of $k\le L$
because  the interaction terms near the
boundary cancel by \eqref{ybth}. Moreover, notice that the linear terms,
coming from the derivative of the quadratic confinements (see \eqref{232}
and \eqref{242}),  cancel each other
$$
  \frac{s(\by)^2}{\htau}(x_j- \th_j')- \frac{1}{\tau} (x_j- \gamma_j)
  =  \frac { \gamma_{j}  - \theta_{j}' }  {  \tau  }
$$
by the choice of $\tau(\by) = \htau/s(\by)^2$.

Similarly, for  $L+K-4B < j \le L + K$, we set
\begin{align}
Z_j := &
 \frac{\beta}{2}V'(x_j) -
  \frac \beta N \sum_{    k > L+K+2B \atop k<L }
   \frac 1 {x_j- y_{k}^B}
 - \frac{\beta}{2} W_s '( x_j) +  \frac \beta N \sum_{  k >  L+K+2B\atop k<L}
  \frac 1 {x_j-  \th_{k}'}
   +     \frac { \gamma_{j}  - \theta_{j}' }  {  \tau  }. \nonumber
\end{align}
Finally,
for $L +4B < j \le L +K- 4B$, we define
\[
Z_j: =
\frac{\beta}{2} V'(x_j) -   \frac \beta N \sum_{    k < L \atop k>L+K+1}   \frac 1 {x_j- y_{k}}
 - \frac{\beta}{2} W_s '( x_j) +  \frac \beta N \sum_{  k <  L\atop k>L+K+1}
  \frac 1 {x_j-  \th_{k}'}
   +     \frac { \gamma_{j}  - \theta_{j}' }  {  \tau  }.
\]
Notice that here $y_k$ is not replaced with $y_k^B$  since
only interactions for $x_j$'s  near the edges have been regularized.
Moreover, the interactions with the boundary terms $y_k$, with $k=L$ and
$k=L+K+1$ cancel out since $y_L=\th_L'$ and $y_{L+K+1}=\th'_{L+K+1}$ by
the matching construction.

\bigskip

Now we estimate the size of $Z_j$ in each case.

\bigskip

\underline{\it Case 1: $L +4B < j \le L +K- 4B$}.
The first step is to decompose $Z_j$ as
\be
Z_j
= \beta \sum_{a=1}^5  \Omega_j^a,
\ee
where
\begin{align}
\Omega_j^1 &
 : =  \left [  \frac{1}{2} V'(x_j) -\int \rd y  \frac {\rho_V(y) } {x_j- y }  \right ]
- \left [  \frac{1}{2} W_s '( x_j) -  \int \rd y  \frac {\rho_{W_s}( y) } { x_j- y }  \right ]
\nonumber\\
\Omega^2_j & = \Omega^{2,low}_j+\Omega^{2,up}_j \non\\
& : = - \Bigg( \frac 1 N  \sum_{  k< L   }
 \frac 1 {x_j- y_{k}} - \int_{ -\infty}^{y_{L}}    \frac {\rho_V(y) } {x_j- y }  \rd y\Bigg)
  - \Bigg( \frac 1 N  \sum_{  k > L+K+1   }
 \frac 1 {x_j- y_{k}} - \int_{y_{L+K+1}}^{\infty}    \frac {\rho_V(y) } {x_j- y }  \rd y\Bigg)
\nonumber\\
\Omega^3_j &  = \Omega^{3,low}_j+\Omega^{3,up}_j \non \\
& := \Bigg( \frac 1 N  \sum_{  k<   L }   \frac 1 {x_j-  \th_{k}'} -
    \int_{ -\infty}^{ \theta_{ L}'}    \frac {\rho_{W_s}(y) } { x_j- y }  \rd y\Bigg)
  + \Bigg( \frac 1 N  \sum_{  k>  L+K+1 }   \frac 1 {x_j-  \th_{k}'} -
    \int_{ \th_{L+K+1}'}^{ \infty}    \frac {\rho_{W_s}(y) } { x_j- y }  \rd y\Bigg)
\nonumber\\
\Omega^4_j & : = \int_{y_{L}}^{ y_{L+K+1} }    \frac {\rho_V(y) - \rho_{W_s}(y)  } {x_j- y } \rd y
\nonumber\\
\Omega^5_j & : =    \frac { \gamma_{j}  - \theta_{j}' }  {  \beta \tau  }.
\label{fiveom}
\end{align}
Here we also used that $[y_L, y_{L+K+1}]=[\th_L', \th_{L+K+1}']$ when
establishing the limits of integrations.
By the equilibrium relation \eqref{equilibrium}
between $V$ and $\rho_V$, we have
\be \Omega^1_j=0.
\label{om1}
\ee

{F}rom \eqref{38}, we have
\be\label{Om2}
[\Omega^5_j]^2 = C\frac {( \gamma_{j}- \theta_{j}')^2} {\tau^2}
 \le \frac C {\tau^2} \left [ \delta^2+  \frac { K^4 } { N^4 } \right ].
\ee

Since  $\rho_V\in C^1$  away from the edge, and so is the semicircle
density $\rho_{W_s}$, we have by Taylor expansion
\begin{align}
|\Omega^4_j| & = \Big|\int_{y_{L}}^{ y_{L+K+1} }
    \frac {\rho_V(y) - \rho_{W_s}(y)  } {x_j- y } \rd y\Big|
\label{Om4}\\
& \le
\Bigg| \int_{y_{L}}^{ y_{L+K+1} }
\frac {\rho_V(x_j)  - \rho_{W_s}(x_j)    + O( x_j-y )  } {x_j- y }  \rd y
\Bigg|  \nonumber\\
& \le C  \big[ |\log (x_j-y_L)|+|\log (y_{L+K+1}-x_j)|\big]   \left [
  \frac K N + \frac { \delta N } K \right ]. \nonumber
\end{align}
Here we used \eqref{ap} and \eqref{VW0} and the fact that
$\rho_{W_s}(x)-\rho_W(x) = O(|s-1|)$ away from the edge
together with \eqref{s-1} to estimate
$$
 |\rho_V(x)  - \rho_{W_s}(x)|\le C\left[ \frac K N + \frac { \delta N } K \right]
$$
for any $x\in [y_L, y_{L+K+1}]$.
The logarithmic terms after taking square and expectation w.r.t.
 will give rise to an irrelevant $\log N$ factor by
using Lemma~\ref{lm:rep}
$$
  \E_{\mu_\by^{B,\tau}} \big[ |\log (x_j-y_L)|+|\log (y_{L+K+1}-x_j)|\big]^2\le C\log N.
$$
\medskip

We now estimate the main error  $\Omega^2_j $ and we will deal with
the first term only, coming from the lower edge, the second one can be treated similarly.
We write it as
$$
\Omega_j^{2,low}=-\Bigg(  \frac 1 N  \sum_{  k< L   }
 \frac 1 {x_j- y_{k}} - \int_{ -\infty}^{y_{L}}    \frac {\rho_V(y) } {x_j- y }  \rd y\Bigg)
=\Omega_j^{2,1}+\Omega_j^{2,2} + \Omega_j^{2,3}
$$
with
\begin{align}
\Omega_j^{2,1} & :=  -\Bigg( \frac 1 N  \sum_{  k< L   }
  \frac 1 {x_j- \gamma_{k}} - \int_{ -\infty}^{\gamma_{L}}    \frac {\rho_V(y) } {x_j- y }  \rd y\Bigg)
\non\\
\Omega_j^{2,2} & :=  \int_{\gamma_{L}}^{y_L}    \frac {\rho_V(y) } {x_j- y }  \rd y
\non\\
\Omega_j^{2,3} & :=  \frac 1 N  \sum_{  k< L   }
  \Big[ \frac 1 {x_j- \gamma_{k}}-  \frac 1 {x_j- y_{k}} \Big].
\label{Om3decomp}
\end{align}
With $\zeta= N^{-z}$ with $z$ is given in Lemma~\ref{eb},
define the event
$$
\Lambda=\Big\{ |x_i^{[B]} -\gamma_i^{[B]}|\le 6\zeta, \quad
\forall i \in\llbracket L+B+1, L+K-B\rrbracket\Big\},
$$
then its complement has very small probability,
$$
  \P_{\mu_\by^{B,\tau}}\big( \Lambda^c\big)\le c_1 e^{-c_2N^{\e'}}
$$
from  \eqref{cont1} and \eqref{E-E}.
On the event $\Lambda^c$ we simply estimate
$$
  \Big| \frac 1 N  \sum_{  k< L   }
 \frac 1 {x_j- y_{k}} - \int_{ -\infty}^{y_{L}}    \frac {\rho_V(y) } {x_j- y }  \rd y\Big|
\le \frac{1}{y_L-y_{L-1}}+ C\big|\log (x_j-y_L)\big| ,
$$
therefore
\begin{align}
  \E_{\mu_\by^{B,\tau}} {\bf 1}(\Lambda^c) & \Big| \frac 1 N  \sum_{  k< L   }
 \frac 1 {x_j- y_{k}} - \int_{ -\infty}^{y_{L}}    \frac {\rho_V(y) } {x_j- y }  \rd y\Big|^2
\nonumber\\
  & \le C\Big(\frac{1}{(y_L-y_{L-1})^2}+  \E_{\mu_\by^{B,\tau}}\big|\log (x_j-y_L)\big|^4\Big)^{1/2}
 \big( \P_{\mu_\by^{B,\tau}}\big( \Lambda^c\big)\big)^{1/2}\le c_1 e^{-c_2N^{\e'/3}}
\label{subb}
\end{align}
by using Lemma~\ref{lm:rep} and $|y_L-y_{L-1}|\ge \exp(- N^{\e_0})$ from
$\by\in \cG$. Here we used that $\e_0\le \e'/10$.

\medskip

Now we continue the estimate on the set $\Lambda$
and we consider the three terms in \eqref{Om3decomp} separately.
For the first term  we write
$$
   \Omega_j^{2,1} =  \frac 1 N  \sum_{  k< L   } \Big[
  \frac 1 {x_j- \gamma_{k}} -
\int_{ \gamma_{k}}^{\gamma_{k+1}}    \frac {N\rho_V(y) } {x_j- y }  \rd y\Big]
=  \frac{1}{N} \sum_{  k< L   }
\int_{\gamma_k}^{\gamma_{k+1}}  \Gamma_{j}^k   N\rho_V(y) \rd y
$$
where we have used that $\int_{\gamma_k}^{\gamma_{k+1}} N\rho_V=1$ and
$$
   \Gamma_{j}^k  =       \frac {\gamma_k-y } {(x_j- y)(x_j-\gamma_k) }.
$$
Recall that  $L\ge \kappa N\gg \delta$ and
$x_j\in [y_L, y_{L+K+1}] = [\gamma_L,\gamma_{L+K+1}] + O(\delta)$.
For $k\le \frac{1}{2}\kappa N$ we know that $|\gamma_k-x_j|\ge c$ with some positive constant. Hence we have
$$
  \frac{1}{N}\sum_{k\le \kappa N/2} \Gamma_{j}^k  \le
  \frac C N  \sum_{  k\le \kappa N/2   }
\int_{\gamma_k}^{\gamma_{k+1}}    |\gamma_k-y|  N\rho_V(y) \rd y
\le  \frac C N  \sum_{  k\le \kappa N/2   }   |\gamma_{k+1}-\gamma_k|\le CN^{-1},
$$
since $\gamma_{k+1}-\gamma_k \le CN^{-2/3}k^{-1/3}$ near a square root
singularity of $\rho_V$ at the edge. For the regime $k\ge \frac{1}{2}\kappa N$ we can use
$|\gamma_{k+1}-\gamma_k|\le CN^{-1}$ to get
\begin{align}
  \frac{1}{N}\sum_{ \kappa N/2\le k< L}  \Gamma_{j}^k  & \le
\frac{1}{N}\sum_{ \kappa N/2\le k< L} \frac{C}{N} \frac{1}{(x_j-\gamma_k)^2} \nonumber \\
& \le \frac{1}{N}\sum_{ \kappa N/2\le k< L} \frac{C}{N} \frac{1}{(x_{j-B}^{[B]}-\gamma_k)^2}
\nonumber \\
& \le \frac{C}{N} \frac{1}{(x_{j-B}^{[B]}-\gamma_L)}\le \frac{C}{B}.  \nonumber
\end{align}
Here in the second inequality we used that
 on the set $\Lambda$ we have
\be\label{xy}
x_j\ge x_{j-B}^{[B]}> \gamma_{j-B}^{[B]}-6\zeta
\ge \gamma_{j-2B}-6\zeta\ge \gamma_L + cBN^{-1}> \gamma_k+cBN^{-1}
\ee
 for $k<L$
using $j\ge L+4B$ and thus $\gamma_{j-2B}-\gamma_L\ge cBN^{-1}\gg 6\zeta$,
since $z>1-b$. Therefore $x_j-\gamma_k\ge x_{j-B}^{[B]}-\gamma_k>0$.
In the third inequality we performed the summation and used that $\gamma_k$
is regularly spaced. In the last inequality we again used \eqref{xy}.
In summary, we have shown that
\be\label{om21}
  |\Om_j^{2,1}|\le \frac{C}{B}+\frac{C}{N}
\ee
on the set $\Lambda$ and we have seen that the contribution
from $\Lambda^c$ is subexponentially small \eqref{subb}.

\medskip

Now we consider $\Om_j^{2,2}$ on $\Lambda$.
We have
\be\label{om22}
  |\Om_j^{2,2}|\le C \int_{y_L}^{\gamma_L} \frac{\rd y}{x_j-y}\le
\frac{C\delta}{\gamma_{j-2B}-\gamma_L},
\ee
by using $x_j-\gamma_L \ge \gamma_{j-2B}-\gamma_L -6\zeta
 \ge c( \gamma_{j-2B}-\gamma_L) $ from \eqref{xy}
and from  $\gamma_{j-2B}-\gamma_{L}\ge cBN^{-1}\gg 6\zeta$, moreover
$x_j-y_L\ge x_j-\gamma_L - \delta \ge c( \gamma_{j-2B}-\gamma_L)$
 by
$|\gamma_L-y_L|\le \delta$  (from $\by\in \cG$)
and $\delta\ll BN^{-1}$ (from \eqref{p1}).
Thus
$$
\sum_{L+4B\le j\le L+K-4B}  |\Om_j^{2,2}|^2
\le \frac{C\delta^2N^2}{B}.
$$

\medskip

For the third term $\Om_j^{2,3}$ we have
 \begin{align}
 \E_{\mu_\by^{B,\tau} }  {\bf 1}(\Lambda)
 & \sum_{L+4B < j \le L+ K-4B}  [\Omega^{2,3}_j ]^2  \label{om333} \\
& \le   \E_{\mu_\by^{B,\tau} }   {\bf 1}(\Lambda)
\sum_{L+4B < j \le L + K-4B}
\left [  \frac 1 N  \sum_{k < L } \Big(  \frac 1 {x_j- y_k} -     \frac 1 {x_j- \gamma_k}
 \Big)\right ] ^2
\nonumber \\
 & \le     \E_{\mu_\by^{B,\tau} }  {\bf 1}(\Lambda) \sum_{L+4B < j \le L + K-4B}
  \left [ \frac 1 N  \sum_{  k < L  }
 \frac {(y_k-\gamma_k) } { (x_j- y_k ) (x_j- \gamma_k )}  \right ] ^2.
\non
\end{align}
We split the summation over $k$ into two terms: $\kappa N/2\le k <L$ and $k<\kappa N/2$
and separate by a Schwarz inequality.

First we consider the case  $\kappa N/2 \le k < L$.
Expanding the square, we need to bound
\begin{align}\label{expsq}
\E_{\mu_\by^{B,\tau} }&  {\bf 1}(\Lambda)   \frac {1} {N^2}   \sum_{ \kappa N/2\le k < L }
 \sum_{\kappa N/2\le a< L}
 \sum_{L +4B < j   \le L+K }
     \frac { |y_k-\gamma_k|   |y_a-\gamma_a| } { (x_j- y_k) (x_j-\gamma_k)   (x_j- y_a)
   (x_j- \gamma_a)}     \\
&  \le   2 \E_{\mu_\by^{B,\tau} }  {\bf 1}(\Lambda)
  \frac 1 {N^2}     \sum_{ \kappa N/2\le k < L }  |y_k-\gamma_{k}|^2
    \sum_{L+4B \le j  \le L +  K }        \frac { 1   } { (x_j- y_{k})^2  }
 \sum_{\kappa N/2\le a < L}   \frac { 1   } {   (x_j- \gamma_a)^2} ,\non
\end{align}
where we used another Schwarz inequality and
the factor 2 accounts for a similar term with the role of $k$ and $a$
interchanged.

In the case  $\kappa N/2 \le k < L$ we have  $|\gamma_k-y_k|\le \delta$.
Then  \eqref{expsq} is bounded by
\begin{align}\label{frr}
 \frac {2\delta^2}  {N^2}  \sum_{L+4B \le j  \le L +  K } &
      \E_{\mu_\by^{B,\tau} }      {\bf 1}(\Lambda)   \sum_{ k < L } \frac { 1   } { (x_j- y_{k})^2  }
 \sum_ {a <L}   \frac { 1   } {   (x_j- \gamma_{a})^2}
\\
& \le  C \delta^2    \sum_{L+4B \le j  \le L +  K }
   \E_{\mu_\by^{B,\tau} }      {\bf 1}(\Lambda)   \frac { 1   } { (x_j- y_{L})^2  }
\le C\delta^2   \frac { N^2} { B}.   \non
\end{align}
Here we used
$$
  \frac{1}{N} \sum_ {a <L}   \frac { 1   } {   (x_j- \gamma_{a})^2} \le \frac{C}{x_j-\gamma_L}
  \le  \frac{C}{x_j-y_L}
$$
relying on the regularity of $\gamma_a$ and using, from \eqref{xy},
 that $x_j-\gamma_a\ge x_j-\gamma_L
\ge cBN^{-1}$ which is much larger than the spacing of order $N^{-1}$ of the $\gamma$-sequence.
In the last estimate $x_j-\gamma_L\gg |\gamma_L-y_L|$ was used (since $BN^{-1}\gg \delta$).
Similarly we could perform the $k$ summation
$$
 \sum_{ k < L } \frac { 1   } { (x_j- y_{k})^2  } \le  \frac{C}{x_j-y_L}
$$
since $x_j-y_k\ge x_j-\gamma_k -\delta  \ge c(x_j-\gamma_k)$.

To perform the $j$ summation in \eqref{frr}, we use
$$
 \frac { 1   } { (x_j- y_{L})^2  }
 \le  \frac { 1   } { (x_{j-B}^{[B]}- y_{L})^2  },
$$
and then we recall that apart from a set of subexponentially small
probability, we have
$$
   |x_{j-B}^{[B]}-\gamma_{j-B}^{[B]}|\le 6\zeta
$$
from Lemma~\ref{lm:accuracy}. Since $\zeta\ll BN^{-1}$ and
$x_{j-B}^{[B]}- y_{L}\ge cBN^{-1}$ from \eqref{xy},
we see that
$$
\sum_{L+4B \le j  \le L +  K }    \frac { 1   } { (x_j- y_{L})^2  }
\le\sum_{L+4B \le j }\frac { C   } { (\gamma_{j-B}^{[B]}- y_{L})^2  }
\le \frac{CN}{\gamma_{L+3B}^{[B]}- y_{L}}\le \frac{CN^2}{B}.
$$
On the exceptional set one can just use the trivial bound
$(x_j-y_L)^{-2}\le C(x_j-\gamma_L)^{-2}\le CN^2B^{-2}$ from \eqref{xy}.

\bigskip
Consider now  the case  $ k \le \kappa N/2$ in \eqref{om333}. We have
\begin{align} \label{6141}
 \E_{\mu_\by^{B,\tau }}   {\bf 1}(\Lambda)  \frac 1 {N^2} &
  \sum_{ k  \le \kappa N/2  }  \sum_{a \le \kappa N/2 }
 \sum_{L +4B \le j   \le L+K }
    \frac { |y_k-\gamma_{k}|   |y_a-\gamma_{a}|   } { (x_j- y_{k}) (x_j-\gamma_{k})   (x_j- y_{a})
   (x_j- \gamma_{a})}   \\
 &  \le   2\E_{\mu_\by^{B,\tau} }    {\bf 1}(\Lambda) \frac 1 {N^2}
   \sum_{ k  \le \kappa N/2  }  |y_k-\gamma_{k}|^2
  \sum_{L+4B \le j  \le L +  K }        \frac { 1   } { (x_j- y_{k})^2  }
 \sum_ {a \le \kappa N/2 }   \frac { 1   } {   (x_j- \gamma_{a})^2}\non \\
 &  \le   \frac {CK} {N}  \E_{\mu_\by^{B,\tau} }   {\bf 1}(\Lambda)
  \sum_{ k  \le \kappa N/2  }  |y_k-\gamma_{k}|^2    \le C KN^{-2/5+ \varphi},  \non
\end{align}
where we used that all denominators are separated away from zero
and Lemma \ref{edge}.
Furthermore, in the last inequality, we have used Lemma \ref{edge}
 for $k \ge N^{3/5 + \varphi}$ and
 we used $ |y_k-\gamma_{k}| \le O(1)$ for $k \le N^{3/5 + \varphi}$ from
 $\by\in \cG$ and \eqref{goodset}.
Similar comment applies to all edge terms in this proof and we will not repeat it.

Summarizing, we have shown that
\be\label{Om23}
\E_{\mu_\by^{B,\tau} }  {\bf 1}(\Lambda)
 \sum_{L+4B < j \le L+ K-4B}  [\Omega^{2,3}_j ]^2\le \frac{C\delta^2N^2}{B}+  C KN^{-2/5+ \varphi}.
\ee

Finally, we need to estimate $\Om_j^3$ in \eqref{fiveom}. It can
be treated exactly as $\Om_j^{2,1}$ and the result is
\be
\label{om3}
 |\Om_j^{3}|\le \frac{C}{B}+\frac{C}{N}
\ee
on the set $\Lambda$ and the contribution
from $\Lambda^c$ is subexponentially small as in \eqref{subb}.

\bigskip

\underline{\it Case 2: $L  < j \le L +4B$.}
 (There is a third case $L+K-4B\le j\le L+K$ which
is identical to Case 2 and will not be treated separately).
We  decompose $Z_j$ as before and the only modifications are
\begin{align}
\Omega^{2,low}_j&  := -\Bigg(  \frac 1 N  \sum_{  k\le L-2B   }
\frac 1 {x_j- y_{k}^B} - \int_{ -\infty}^{y_{L-2B}}   \frac {\rho_V(y) } {x_j- y }  \rd y\Bigg)
\non\\
\Omega^{3,low}_j& :=  \frac 1 N  \sum_{  k\le   L-2B }   \frac 1 {x_j-  \th_{k}'} -
 \int_{ -\infty}^{ \theta_{ L-2B}'}    \frac {\rho_{W_s}(y) } { x_j- y }  \rd y
\non\\
\Omega^4_j & := \int_{y_{L-2B}}^{ y_{L+K+1} }    \frac {\rho_V(y) - \rho_{W_s}(y)  } {x_j- y } \rd y
+\int_{y_{L-2B}}^{ \th'_{l-2B} }    \frac {\rho_{W_s}(y)  } {x_j- y } \rd y.
\non
\end{align}

We now estimate the main error term $\Omega^{2,low}_j $, and we write it, as before
$$
\Omega_j^{2,low}=\Omega_j^{2,1}+\Omega_j^{2,2} + \Omega_j^{2,3}
$$
with
\begin{align}
\Omega_j^{2,1} & :=  -\Bigg( \frac 1 N  \sum_{  k< L-2B   }
  \frac 1 {x_j- \gamma_{k}} - \int_{ -\infty}^{\gamma_{L-2B}}
  \frac {\rho_V(y) } {x_j- y }  \rd y\Bigg)
\non\\
\Omega_j^{2,2} & :=  \int_{\gamma_{L-2B}}^{y_{L-2B}}    \frac {\rho_V(y) } {x_j- y }  \rd y
\non\\
\Omega_j^{2,3} & :=  \frac 1 N  \sum_{  k< L-2B   }
  \Big[ \frac 1 {x_j- \gamma_{k}}-  \frac 1 {x_j- y_{k}} \Big].
\label{Om3newdecomp}
\end{align}
We have
\begin{align}
 |\Omega_j^{2,1}| & =\Bigg| \frac 1 N  \sum_{  k< L-2B   }
   \int_{ \gamma_k}^{\gamma_{k+1}}
  \frac {y-\gamma_k} {(x_j- y)(x_j-\gamma_k) } N\rho_V(y) \rd y\Bigg| \non\\
& \le \frac C N  \sum_{  k< L-2B   }
  \frac {\gamma_{k+1}-\gamma_k } {(x_j-\gamma_k)^2 } \non\\
&\le  \frac C B   +\frac{C}{N}\sum_{k\le \kappa N/2} |\gamma_{k+1}-\gamma_k|\le \frac{C}{B} +\frac{C}{N}
\non
\end{align}
using that $x_j\ge y_L\ge \gamma_L-\delta \ge \gamma_{L-2B} +cBN^{-1}$.
The estimate of $\Omega_j^{2,2}$ is trivial
$$
  |\Omega_j^{2,2}|\le \frac{|\gamma_{L-2B}-y_{L-2B}|}{cBN^{-1}}\le \frac{CN\delta}{B}.
$$
Finally
 \begin{align}\label{613new}
  \E_{\mu_\by^{B,\tau} }    \sum_{L \le j \le L + 4B}  [\Omega^{2,3}_j ]^2
 & \le    \E_{\mu_\by^{B,\tau} }   \sum_{L \le j \le L + 2B}   \left [  \frac 1 N  \sum_{k < L-2B }
  \Big(\frac 1 {x_j- y_k} -     \frac 1 {x_j- \gamma_k}\Big)\right ] ^2
\non\\
& \le  \frac {C\delta^2}  {N^2}   \sum_{L \le j  \le L +  4B }
      \E_{\mu_\by^{B,\tau} }   \sum_{\kappa N/2\le  k < L-2B }         \frac { 1   } { (x_j- y_k)^2  }
 \sum_ {\kappa N/2\le a < L-2B}   \frac { 1   } {   (x_j- \gamma_{a})^2}
\non\\
& + \frac{C}{N^2}\sum_{k\le \kappa N/2}|\gamma_k-y_k|^2
\non\\
&  \le C\delta^2   \frac { N^2} { B} +  C KN^{-2/5+ \varphi},
 \non
\end{align}
where we again split the summation over $k$ into $\kappa N/2\le k\le L-2B$ and
$k\le \kappa N/2$, yielding the two terms, similarly to \eqref{frr} and \eqref{6141}.

The estimate $\Omega_j^{3,low}$ is analogous to that of $\Omega_j^{2,1}$.
The first term of $\Omega_j^4$ is estimated as before in \eqref{Om4}.
The additional second term in $\Omega_j^4$ is trivial
by recalling $|\gamma_{L-2B}-\th'_{L-2B}|\le CB^2N^{-2}+C\delta$ from \eqref{38}:
$$
\Bigg|  \int_{y_{L-2B}}^{ \th'_{L-2B} }    \frac {\rho_{W_s}(y)  } {x_j- y } \rd y\Bigg|
 \le \frac{|y_{L-2B}-\th'_{L-2B}|}{cBN^{-1}}\le \frac{C\delta + CB^2N^{-2}}{cBN^{-1}}
\le\frac{C\delta N}{B} + \frac{CB}{N},
$$
since the denominator can be estimated by using
$x_j-y_{L-2B}\ge y_L-y_{L-2B}\ge \gamma_L-\gamma_{L-2B}- 2\delta\ge cBN^{-1}$
and
$$
 x_j-\th'_{L-2B}\ge y_L-\th'_{L-2B} \ge \gamma_L-\gamma_{L-2B} - 2\delta +(\gamma_{L-2B}-
\th'_{L-2B})\ge cBN^{-1}
$$
where we used $\delta \ll BN^{-1}$ and $B\ll N$.

Collecting all the error terms into \eqref{largebound}
and removing some redundant terms, we
 have thus proved Lemma \ref{eb}. \qed

\bigskip 

\noindent{\bf Acknowledgement.}  We would like to thank M. Ledoux 
for pointing out an error in the statement of Lemma \ref{lem:localLSI}
in an early  version of this paper.


\begin{thebibliography}{99}

\bibitem{APS} Albeverio, S., Pastur, L.,  Shcherbina, M.:
 On the $1/n$ expansion for some unitary invariant ensembles of random matrices,
 {\it Commun. Math. Phys.}  {\bf 224}, 271--305 (2001).

 \bibitem{AGZ}  Anderson, G., Guionnet, A., Zeitouni, O.:   An Introduction
to Random Matrices. {\it  Studies in advanced mathematics}, {\bf 118}, Cambridge
University Press, 2009.


\bibitem{BakEme} Bakry, D., \'Emery, M.:  Diffusions hypercontractives.  In: {\it S\'eminaire
de probabilit\'es, XIX, 1983\slash 84, vol. 1123 of Lecture Notes in Math.}  Springer,
Berlin, 1985, pp. 177--206.


\bibitem{BenGui} Ben Arous, G., Guionnet, A.: Large deviations for Wigner's law and
Voiculescu's non-commutative entropy. {\it  Probab. Theory Related Fields} {\bf 108}, 4
(1997), 517--542.

\bibitem{BI} Bleher, P.,  Its, A.:  Semiclassical asymptotics of
orthogonal polynomials, Riemann-Hilbert problem, and universality
 in the matrix model.  {\it Ann. of Math.} {\bf 150}, 185--266 (1999).

\bibitem{BPS}
Boutet de Monvel, A.,  Pastur, L.,  Shcherbina, M.:  On the statistical mechanics approach
in the Random Matrix Theory. Integrated Density of States. {\it  J. Stat. Phys.}
{\bf 79} , 585--611 (1995).




\bibitem{BraLie} Brascamp, H,. Lieb, E.:  On extensions of
the Brunn-Minkowski and Pr\'ekopa-Leindler Theorems, Including Inequalities for Log Concave Functions,
and with an Application to the Diffusion Equation, {\it  Journal of Functional Analysis} {\bf 22},
366--389, 1976






\bibitem{De1} Deift, P.: Orthogonal polynomials and
random matrices: a Riemann-Hilbert approach.
{\it Courant Lecture Notes in Mathematics} {\bf 3},
American Mathematical Society, Providence, RI, 1999.


\bibitem{DG} Deift, P., Gioev, D.:  Random Matrix Theory: Invariant
Ensembles and Universality. {\it Courant Lecture Notes in Mathematics} {\bf 18},
American Mathematical Society, Providence, RI, 2009.


\bibitem{DKMVZ1} Deift, P., Kriecherbauer, T., McLaughlin, K.T-R,
 Venakides, S., Zhou, X.: Uniform asymptotics for polynomials
orthogonal with respect to varying exponential weights and applications
 to universality questions in random matrix theory. {\it  Comm. Pure Appl. Math}.  {\bf 52}, 1335--1425 (1999).

\bibitem{DKMVZ2} Deift, P., Kriecherbauer, T., McLaughlin, K.T-R,
 Venakides, S., Zhou, X.: Strong asymptotics of orthogonal polynomials with respect to exponential weights. {\it
Comm.  Pure Appl. Math.} {\bf 52}, 1491--1552 (1999).


\bibitem{DumEde} Dumitriu, I.,  Edelman, A.:
Matrix Models for Beta Ensembles,
{\it Journal of Mathematical Physics}  {\bf 43} (11)  (2002), 5830--5847


\bibitem{Dy} Dyson, F.J.: A Brownian-motion model for the eigenvalues
of a random matrix. {\it J. Math. Phys.} {\bf 3}, 1191--1198 (1962).



\bibitem{ESY4} Erd{\H o}s, L.,  Schlein, B.,  Yau, H.-T.:
 Universality of Random Matrices and Local Relaxation Flow.
{\it Invent. Math.} {\bf 185} (2011), no.1, 75--119.



\bibitem{ESYY} Erd{\H o}s, L.,  Schlein, B.,  Yau, H.-T.,  Yin, J.:
 The local relaxation flow approach to universality of the local
statistics for random matrices.
{\it Annales Inst. H. Poincar\'e (B),  Probability and Statistics}
{\bf 48}, no. 1, 1--46 (2012)



\bibitem{ErdRamSchYau}
Erd{\H o}s, L.,  Ram\'irez, J.-A., Schlein, B.,  Yau, H.-T.:
 Universality of sine-kernel for Wigner matrices with a small Gaussian perturbation,
{\it Electronic Journal of Probability,} {\bf 15} (2010), Paper no. 18, pages 526--603.




\bibitem{EKYY2} Erd{\H o}s, L., Knowles, A.,  Yau, H.-T., Yin, J.:
 Spectral Statistics of Erd{\H o}s-R\'enyi Graphs II:
 Eigenvalue Spacing and the Extreme Eigenvalues.
Preprint {\tt arXiv:1103.3869}


\bibitem{EYYrigi} Erd{\H o}s, L.,  Yau, H.-T., Yin, J.,
 Rigidity of Eigenvalues of Generalized Wigner Matrices.
To appear in Adv. Math. Preprint {\tt arXiv:1007.4652.}




\bibitem{Ey} Eynard, B.:   Master loop equations,
free energy and correlations for the chain of matrices. {\it
J. High Energy Phys.} {\bf 11}  2003, 018



\bibitem{Joh}	 Johansson, K.:  On fluctuations of eigenvalues of random
 Hermitian matrices. {\it Duke Math. J.}  {\bf 91} (1998), no. 1, 151--204.



\bibitem{KS}  Kriecherbauer, T.,  Shcherbina, M.:
 Fluctuations of eigenvalues of matrix models and their applications. Preprint {\tt arXiv:1003.6121}




\bibitem{Lub} Lubinsky, D.S.: A New Approach to Universality Limits Involving Orthogonal
 Polynomials, {\it Annals of Mathematics} {\bf 170} (2009), 915--939.

\bibitem{M} Mehta, M.L.: Random Matrices. Academic Press, New York, 1991.


\bibitem{PS:97} Pastur, L., Shcherbina, M.: Universality of the local
eigenvalue statistics for a class of unitary invariant random
matrix ensembles. {\it J. Stat. Phys.} \textbf{86}, 109--147,
(1997)

\bibitem{PS} Pastur, L., Shcherbina M.:
Bulk universality and related properties of Hermitian matrix models.
{\it J. Stat. Phys.} {\bf 130}, no. 2., 205--250  (2008).


\bibitem{Sch}  Shcherbina, M.:
Orthogonal and symplectic matrix models: universality and other properties. Preprint {\tt arXiv:1004.2765}



\bibitem{VV} Valk\'o, B.; Vir\'ag, B.:
  Continuum limits of random matrices and the Brownian carousel. {\it Invent. Math.}
{\bf  177} (2009), no. 3, 463-508.

\end{thebibliography}
\end{document}